\newtheorem{theorem}{Theorem}[section]
\newtheorem{definition}[theorem]{Definition}
\newtheorem{lemma}[theorem]{Lemma}
\newtheorem{claim}[theorem]{Claim}
\newtheorem{corollary}[theorem]{Corollary}
\newtheorem{question}[theorem]{Question}
\newtheorem{conjecture}[theorem]{Conjecture}
\def\dHH#1{\leavevmode\setbox0=\hbox{#1}\dimen0=\wd0\setbox0=\hbox{.}%
	\advance\dimen0 by -\wd0%
	\hbox{#1\raise-0.5ex\hbox to 0pt{\hss.\kern.5\dimen0}}}%
\newcommand{\nn}{{\mathbb{N}}}
\newcommand{\s}[1]{\sigma_{#1}}
\begin{document}
	\title[Colourful matchings]{Colourful matchings}

	\author[A. Arman]{Andrii Arman}
	\address{Department of Mathematics, Emory University, Atlanta, GA 30322, USA}
	\email{andrii.arman@emory.edu}
	\thanks{}
	
	\author[V. R\"{o}dl]{Vojt\v{e}ch R\"{o}dl}
	\address{Department of Mathematics, 
		Emory University, Atlanta, GA 30322, USA}
	\email{rodl@mathcs.emory.edu}
	\thanks{The second  author was supported by NSF grant DMS 1764385}

	\author[M. T. Sales]{Marcelo Tadeu Sales}
	\address{Department of Mathematics, 
		Emory University, Atlanta, GA 30322, USA}
	\email{marcelo.tadeu.sales@emory.edu}
	
	
	\begin{center}
	\end{center}
	\begin{abstract} 
		Suppose a committee consisting of three members has to match $n$ candidates to $n$ different positions. Each member of the committee proposes a matching, however the proposed matchings totally disagree, i.e., every candidate is matched to three different positions according to three committee members. All three committee members are very competitive and want to push through as many of their suggestions as possible. Can a committee always find a compromise --- a matching of candidates to positions such that for every committee member a third of all candidates are assigned according to that committee member suggestion?
		
		We will consider an asymptotic version of this question and several other
		variants of similar problem. As an application we will consider an
		embedding problem --- in particular which configurations large Steiner
		systems always need to contain.
	\end{abstract}

	\maketitle
	
	
	\section{Introduction}
		In this paper we study the following question.
	\begin{question}\label{question:main}
		Let $G$ be a graph on $2n$ vertices that is a union of $k$ edge-disjoint perfect matchings $M_1,\ldots, M_{k}$. For which sequences $a_1, \ldots, a_{k}$ with $\sum_{i=1}^{k}a_i\leq n$ we can always find a new matching $M$ in $G$ such that $|M\cap M_{i}|\geq a_{i}$ for all $i\in[k]$? 
	\end{question}	
		Question~\ref{question:main}, in particular, is related to the famous Ryser conjecture, which is equivalent to asking if in every proper edge-colouring of $K_{n,n}$ there is a rainbow matching, i.e. if  for any $n$ disjoint perfect matchings $M_1,...,M_n$ of $K_{n,n}$ there is a matching $M$, such that $|M\cap M_{i}|=1$ for all $i\in[n]$. 
	
		Keevash, Pokrovskiy, Sudakov and Yepremyan~\cite{KPSY}  proved that if $K_{n,n}$ is decomposed into $n$ perfect matchings $M_1,\ldots, M_{n}$, then there is a matching $M$ (a rainbow matching) such that $|M\cap M_{i}|\leq 1$ for all $i\in[n]$ and $M$ has size at least $n-O(\frac{\log n}{\log \log n})$. 
	
		Note that the result of Keevash et.al.~\cite{KPSY} implies that relaxed version of Question~\ref{question:main} with $a_{1}=\ldots=a_{n}=1$ has affirmative answer: if $K_{n,n}$ is decomposed into $n$ disjoint perfect matchings, then there are $k=n-O(\frac{\log n}{\log \log n})$ of them, say $M_1, \ldots, M_{k}$ and a matching $M$ such that $|M\cap M_{i}|\geq 1$ for all $i\in[k]$.  

		The following is a central definition for this paper that formalizes a relaxation of Question~\ref{question:main} that we are considering.
	\begin{definition}\label{def:enhappy}
		We say that a sequence of integers $a_1, \ldots, a_k$ is {\textit{$(\varepsilon,n)$-happy}} if there is an integer $\ell\in[k,(1+\varepsilon)k]$ such that for any $\ell$ disjoint perfect matchings of $K_{2n}$ there are $k$ of them $M_1,\ldots, M_k$ and a matching $M$ such that $|M\cap M_i|\geq a_i$ for all $i\in[k]$.
	\end{definition}
		We show that a relaxed version of Question~\ref{question:main} has an affirmative answer for a wide variety of sequences, namely we prove the following theorem.
	
	\begin{theorem}\label{thm:main}
		For any $\varepsilon\in (0, 1)$ there is $n_0$, such that for all integers $n\geq n_0$ the following holds. 
		Let $\{a_i\}_{i=1}^{k}$ be a sequence of positive integer such that $\sum a_i<  (1-\varepsilon)n$ and either 
		\begin{itemize}
			\item[(i)] $a_i\leq n^{1-\varepsilon}$ for all $i$, or
			\item[(ii)] $a_i\geq n^{\varepsilon}$ for all $i$. 
		\end{itemize}
		Then sequence $\{a_{i}\}_{i=1}^k$ is $(\varepsilon,n)$-happy.
	\end{theorem}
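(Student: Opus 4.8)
The plan is to first reformulate the goal. Finding a matching $M$ with $|M\cap M_i|\geq a_i$ for all $i\in[k]$ is the same as finding pairwise disjoint vertex sets $V_1,\dots,V_k\subseteq[2n]$ such that each $V_i$ is a union of at least $a_i$ edges of $M_i$; then $M=\bigcup_{i=1}^{k}M_i[V_i]$ works. Fractionally this is trivial: put weight $a_i/n$ on each edge of $M_i$ (the chosen matchings are pairwise disjoint, so this is well defined), and since $\sum_i a_i<(1-\varepsilon)n$ every vertex receives total weight $<1-\varepsilon$. So the whole content of the theorem is a rounding statement, and the point of the two size regimes will be exactly that these are the ranges in which the rounding survives: the slack $\ell\leq(1+\varepsilon)k$ lets us throw away up to $\varepsilon k$ of the given matchings, and the slack $\sum_i a_i<(1-\varepsilon)n$ leaves a reservoir of $\approx\varepsilon n$ vertices to absorb rounding losses. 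In the special case $a_1=\dots=a_k=1$ the rounding step is supplied for free by the rainbow matching theorem of Keevash--Pokrovskiy--Sudakov--Yepremyan~\cite{KPSY}, as already noted in the introduction; the two regimes are designed so that a similar but cruder rounding goes through.

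In regime (ii), where every $a_i\geq n^{\varepsilon}$, the demand is ``fat'': from $\sum a_i<(1-\varepsilon)n$ we get $k<(1-\varepsilon)n^{1-\varepsilon}$, so there are few matchings and few demands. Here I would build $M$ by a greedy-with-repair process. Pair demand $a_i$ with the $i$-th chosen matching, set aside a reservoir $R$ of about $\varepsilon n/2$ vertices, and process the demands one at a time, each time importing $a_i$ edges of $M_i$ into the current matching $M$. While enough vertices outside the already-secured blocks and outside $R$ remain, this is immediate; once it is not, import $M_i$-edges by rotating alternating cycles --- and, for cycles that are too long or meet too many secured blocks, alternating sub-paths --- of $M\triangle M_i$ that pass through $R$, trading reservoir or surplus edges for edges of $M_i$. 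Each sub-path rotation costs at most one edge of $|M|$, and since there are fewer than $n^{1-\varepsilon}$ steps the cumulative loss is negligible against the $\varepsilon n$ of slack in $\sum a_i$; tracking the total deficiency $\sum_i(a_i-|M\cap M_i|)^{+}$ as a potential then closes the argument. The crux in this regime is to guarantee that the repair step always finds a usable alternating structure disjoint from the secured blocks --- equivalently, that reservoir and surplus edges never get spread so thinly along $M\triangle M_i$ that every alternating sub-path meets a secured block --- and it is the smallness of $k$ that is used to push this through: because there are only $<n^{1-\varepsilon}$ demands, one can afford to refresh a reservoir of linear size before handling each one.

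In regime (i), where every $a_i\leq n^{1-\varepsilon}$, I would split the demands into the \emph{moderate} ones, $a_i\geq\log^{3}n$, and the \emph{tiny} ones, $a_i<\log^{3}n$. The moderate demands I would handle with a semi-random (R\"odl nibble) construction: over $O(\log n)$ bites, each bite independently selects every currently available edge with a probability that, for edges of $M_i$, is proportional to $a_i/n$, and retains those selected edges that form a matching; the standard nibble analysis gives that the number of $M_i$-edges ultimately retained concentrates around $(1+\Theta(\varepsilon))a_i$ with relative error $o(1)$, so all but $o(k)$ of the moderate demands are met, and any exceptions are discarded using $\ell>k$. For the tiny demands I would instead adapt the rainbow-matching technology of~\cite{KPSY}, iterating a rainbow-matching extraction $O(\log^{3}n)$ times to pull, from almost all of the tiny matchings simultaneously, at least $a_i\leq\log^{3}n$ pairwise disjoint edges of $M_i$, again discarding the $o(k)$ failures. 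I expect the genuine difficulty of the whole theorem to lie in reconciling these two pieces: the tiny demands can dominate $\sum_i a_i$, so the vertex budgets of the moderate nibble and of the tiny extraction cannot simply be kept apart; one has to interleave them --- or run a single nibble whose bites are ``boosted'' on the tiny matchings --- so that both kinds of demand draw from a common pool of free vertices without starving one another, while keeping both the concentration estimates and the discard count under control. It is precisely the hypothesis $a_i\leq n^{1-\varepsilon}$, bounding how large a tiny demand can be and hence how costly the boosting is, that should make this interleaving feasible.
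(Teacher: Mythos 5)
Your reformulation and the observation that the problem is a ``rounding'' question are both sound, and your instinct that the two size regimes correspond to two qualitatively different rounding arguments matches the structure of the paper (which proves the two cases as separate Theorems~\ref{thm:main1} and~\ref{thm:main2}). However, both of your proposed arguments have genuine gaps, and in each regime the paper resolves the difficulty by a device your plan does not have.

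In regime~(ii) your greedy-with-repair via alternating cycles/paths has the right budget arithmetic ($<n^{1-\varepsilon}$ demands, $\Theta(\varepsilon n)$ slack), but the step you flag as the crux is exactly where it breaks. Rotating along an alternating path in $M\triangle M_i$ gives no per-matching control: a single rotation can kick out many edges from a single secured block $V_j$ (any maximal $M$--$M_i$ alternating path can run repeatedly through $V_j$, since $M_i$ is a perfect matching and $V_j$ is an $M_j$-saturated set, not an $M_i$-structured one), so the quantity you need to protect, $\min_j(|M\cap M_j|-a_j)$, does not decrease by $O(1)$ per step. The claim that one can ``refresh a reservoir of linear size before handling each one'' is also not justified: there are $\Theta(n^{1-\varepsilon})$ demands and only $2n$ vertices in total, so linear-size refreshes cannot be repeated. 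The paper avoids all of this with a completely different mechanism. It defines a sequence of \emph{modified demand vectors} $\mathcal{A}_0,\ldots,\mathcal{A}_s$: at each step the largest demand is removed and the remaining demands are rescaled upward by the freed fraction; it proves each step preserves ``happiness'' (Claim~\ref{claim:nice->nice}, via Lemma~\ref{lemma:nice->nice}) using Alon's necklace theorem, which carves up the union of the existing secured blocks \emph{and} one fresh matching with $O(kq)$ cuts so that every color class is split exactly evenly among $q$ thieves --- this is precisely the simultaneous per-matching control that alternating rotations do not provide. The recursion terminates once the largest demand drops below $n^{1-\delta}$, at which point regime~(i) applies.

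In regime~(i) your nibble instinct is essentially what the paper does (it invokes Alon--Kim--Spencer, which is a nibble theorem), but the tiny-versus-moderate split and the interleaving problem you correctly worry about are engineered away rather than solved. The paper first buckets the demands dyadically into $O(\log n)$ groups $I_1,\ldots,I_t$ with a representative value $m_j$ per group, partitions both the matchings and the vertex set randomly so that each group gets its own subgraph $G_j$ with the right degree $D_j$, and then, within each $G_j$, builds an auxiliary $3$-uniform simple hypergraph by splitting each $M_i$ into $m_j$ chunks and attaching one dummy vertex per chunk (Claim~\ref{claim:AKS}). A near-perfect $3$-uniform matching there automatically enforces $|M\cap M_i|\le m_j$ (each dummy vertex is used at most once), while the near-perfection plus a total-deficit count forces $|M\cap M_i|\ge m_j/(1+\delta)$ for all but $O(b_j\ln^{3/2}D_j/D_j^{1/2})$ of the matchings. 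This handles demands as small as $a_i=1$ on exactly the same footing as large ones --- there is no separate rainbow-matching extraction, and hence no interleaving problem --- and the $\ell\le(1+\varepsilon)k$ slack is spent on the $O(b_j\ln^{3/2}D_j/D_j^{1/2})$ discards per group. Your proposed ``iterated rainbow extraction'' for tiny demands would also face the concrete obstacle that successive extractions must be made vertex-disjoint while staying disjoint from the nibble's output, which is precisely the reconciliation you flag as open.
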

	
		While the proof of case (i) of Theorem~\ref{thm:main} is based on the probabilistic argument the proof of case (ii) of Theorem~\ref{thm:main} relies on Alon's necklace theorem\cite{Alon}.
	
		We will find convenient to divide the proof of Theorem~\ref{thm:main} by formulating two separate statements.
	
	\begin{theorem}\label{thm:main1}
		For any $\varepsilon\in(0,1)$ there is $n_1$ such that for any $n\geq n_1$ any sequence $a_1, \ldots, a_k$ with each term at most $n^{1-\varepsilon}$ and such that $\sum_{i=1}^{k}a_{i}<(1-\varepsilon)n$ is $(\varepsilon,n)$-happy.
	\end{theorem}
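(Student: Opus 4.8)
The plan is to take $\ell=k$ (so that every one of the given matchings has to be used, but nothing must be discarded or re-matched to a demand) and to produce $M$ by a sequential randomised greedy procedure, then to show that the procedure succeeds with high probability. Concretely, let $N_1,\dots,N_k$ be arbitrary pairwise edge-disjoint perfect matchings of $K_{2n}$ and put $m_t=\sum_{s\le t}a_s$, so that $m_k<(1-\varepsilon)n$ by hypothesis. I build disjoint matchings $F_1\subseteq N_1,\dots,F_k\subseteq N_k$ one at a time: at step $t$, let $\mathcal A_t$ be the set of edges of $N_t$ that avoid all previously chosen edges, and — provided $|\mathcal A_t|\ge a_t$ — let $F_t$ be a uniformly random $a_t$-subset of $\mathcal A_t$; if ever $|\mathcal A_t|<a_t$ the procedure fails. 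When it does not fail, $M:=F_1\cup\cdots\cup F_k$ is a matching with $|M\cap N_i|=a_i$ for all $i$, which is exactly what happiness demands, so it is enough to prove that the procedure fails with probability at most $\tfrac12$ (in fact $o(1)$ will come out).

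The content of the argument is the invariant that, with probability $1-o(1)$, after every step $t$ the used vertex set $V_t:=V(F_1\cup\cdots\cup F_t)$ is ``pseudorandom'' with respect to every not-yet-used matching: for each $i>t$ the number $Z^{(t)}_i$ of edges of $N_i$ avoiding $V_t$ stays of order $n$, say $Z_i^{(t)}\ge c_\varepsilon\,n$ for a constant $c_\varepsilon>0$ depending only on $\varepsilon$. Since $|\mathcal A_t|=Z_t^{(t-1)}$ and $c_\varepsilon n\gg n^{1-\varepsilon}\ge a_t$, this is far more room than is needed at each step, so the procedure never fails. To prove the invariant I would fix $i$ and follow the process step by step: passing from $V_t$ to $V_{t+1}$ adds the $2a_{t+1}$ endpoints of a random $a_{t+1}$-subset of the edges of $N_{t+1}$, and because $N_{t+1}$ is edge-disjoint from $N_i$ these new vertices should hit the ``alive'' edges of $N_i$ essentially at random; so $Z_i^{(\cdot)}$ should behave like a supermartingale that decays along the deterministic trajectory one gets by pretending $V_t$ is a uniformly random vertex set of size $2m_t$, and that trajectory stays $\ge\varepsilon^2 n$ throughout precisely because $m_t<(1-\varepsilon)n$. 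Since each of the at most $k\le n$ steps changes $Z_i^{(\cdot)}$ by at most $2a_{t+1}\le 2n^{1-\varepsilon}$, so that $\sum_s(2a_s)^2\le 4n^{1-\varepsilon}\sum_s a_s<4n^{2-\varepsilon}$, a bounded-differences/Azuma estimate confines $Z_i^{(t)}$ to a window of width $\tfrac12\varepsilon^2 n$ around its trajectory with probability $1-e^{-\Omega_\varepsilon(n^{\varepsilon})}$, and a union bound over the at most $n^2$ pairs $(i,t)$ finishes the proof.

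I expect the step that needs real care to be exactly this ``$V_t$ stays pseudorandom for the remaining matchings'' claim: one must verify that the \emph{conditional expected} number of edges of $N_i$ destroyed at a given step equals, up to a $1+o(1)$ factor, the value it would take for a uniformly random used set of the right size — uniformly over all steps and all remaining matchings $N_i$ — and then keep the accumulated fluctuations under control (the subtlety is that the relevant conditional expectation involves how $N_{t+1}$'s partner map interacts with both $V_t$ and the alive structure of $N_i$, not merely the size of $V_t$). It is here that the three hypotheses pull their weight: edge-disjointness of the $N_i$ is what rules out an adversarial alignment that could block a matching prematurely; $a_i\le n^{1-\varepsilon}$ supplies the small bounded differences that make concentration outrun the union bound; and $\sum a_i<(1-\varepsilon)n$ is exactly what keeps $n-m_t\ge\varepsilon n$, hence $Z_i^{(t)}=\Omega_\varepsilon(n)$ and so comfortably above every demand $a_i\le n^{1-\varepsilon}$. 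Only the qualitative conclusion $Z_i^{(t)}=\Omega_\varepsilon(n)$ is actually used, so there is latitude in how precisely the trajectory is tracked (a direct martingale comparison or the differential-equation method would each do), and there is also latitude to spend some of the slack $\ell\in[k,(1+\varepsilon)k]$ — reassigning demands to a best remaining matching at each step, rather than insisting on $F_i\subseteq N_i$ — should that make the pseudorandomness bookkeeping easier.
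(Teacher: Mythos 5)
The proposal takes a genuinely different route from the paper: a direct randomised greedy with $\ell=k$, versus the paper's dyadic partition of the $a_i$'s into $O(\log n)$ comparable scales, a random vertex split, and an application of the Alon--Kim--Spencer near-perfect-matching theorem (Theorem~\ref{thm:AKS} via Lemma~\ref{lemma:AKS2}) to each slice, using the slack $\ell>k$ to absorb the matchings that fall short of their quota. If the student's approach worked it would even give a slightly stronger conclusion ($\ell=k$). But the core step is not a proof; it is a plan whose hardest piece is exactly the one deferred.

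Concretely, the plan rests on the claim that, conditionally on the process so far, the expected number of alive $N_i$-edges destroyed at step $t+1$ is $\approx 2a_{t+1}(1-m_t/n)$, so that $Z_i^{(t)}$ is (up to $o(n)$ noise) a supermartingale shifted by the trajectory $n(1-m_t/n)^2$. That expectation equals, up to lower-order terms, $(a_{t+1}/|\mathcal A_{t+1}|)\cdot P_i$, where $P_i$ is the number of incident pairs consisting of an alive $N_i$-edge and an edge of $\mathcal A_{t+1}$, i.e.\ the number of paths $a\,\text{--}\,b\,\text{--}\,c$ with $\{a,b\}\in N_i$, $\{b,c\}\in N_{t+1}$, and $a,b,c\notin V_t$. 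To get the claimed decrement you need $P_i\approx 2n(1-m_t/n)^3$. But $P_i$ is \emph{not} a function of $Z_i^{(t)}$ and $|\mathcal A_{t+1}|$; it is a codegree-type statistic that can a priori be anywhere up to $2|\mathcal A_{t+1}|$. Hence $Z_i^{(t)}$ is not a supermartingale (against the natural filtration) drifting along the stated trajectory, and the Azuma step as written does not apply. Closing this requires tracking $P_i$ for every remaining pair $(i,t+1)$, and in turn the still higher-order statistics that govern the conditional evolution of $P_i$, and proving each stays near its ``uniform-$V_t$'' value throughout the process. That is precisely a full nibble/semi-random analysis --- essentially the content of the Alon--Kim--Spencer theorem that the paper imports as a black box rather than reprove. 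Edge-disjointness alone does not rule out the adversarial alignments you are worried about (it only guarantees the two $N_i$-edges adjacent to an $N_{t+1}$-edge are distinct); it does not by itself force $P_i$ onto the random trajectory. So the proposal, as it stands, has a genuine gap at its central step, and the paper's more circuitous route (scales so the $a_i$ in a block are comparable, a random vertex split so the block becomes an almost-regular hypergraph, AKS, and slack $\ell>k$) exists precisely to sidestep having to redo that nibble analysis by hand.
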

		We deduce Theorem~\ref{thm:main1} from a slightly more technical Theorem~\ref{thm:main1:1} in Section~\ref{section:main1}.
	\begin{theorem}\label{thm:main2}
		For any $\varepsilon\in(0,1)$ there is $n_2$ such that for any $n\geq n_2$ any sequence $a_1, \ldots, a_k$ with each term at least $n^{\varepsilon}$ and such that $\sum_{i=1}^{k}a_{i}<(1-\varepsilon)n$ is $(\varepsilon,n)$-happy.
	\end{theorem}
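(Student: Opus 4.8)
The plan is to prove Theorem~\ref{thm:main2} via Alon's necklace splitting theorem, which is the natural tool when the parts $a_i$ are all \emph{large}. Let me set $\ell = k$ first as the target (so $\varepsilon$ only gives us room, not a necessity, for this range) and suppose we are handed $\ell$ disjoint perfect matchings $M_1,\dots,M_\ell$ of $K_{2n}$; since $\sum a_i < (1-\varepsilon)n$ and each $a_i \ge n^\varepsilon$, we automatically have $\ell \le n^{1-\varepsilon}$, so the number of matchings is much smaller than $n$. The idea: pass to a single long matching obtained by concatenating portions of the $M_i$, apply necklace splitting to cut it into a few intervals so that each colour $i$ is split almost evenly among the pieces, and then assemble one piece per colour into the desired matching $M$.

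\medskip

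More concretely, here are the steps I would carry out. \emph{Step 1 (reduction to one colour class via a common ordering).} Work inside $K_{2n}$ on vertex set $[2n]$ and observe that any single perfect matching $M_i$, having $n$ edges, can be thought of as $n$ ``beads''; but the matchings overlap on vertices, not edges, so I cannot literally concatenate them into one necklace on a shared bead-set. Instead I would use the following device: build an auxiliary path/cycle structure. Take a random ordering, or better, process the colours one at a time. \emph{Step 2 (iterated splitting, colour by colour).} For colour $1$, the matching $M_1$ is a perfect matching on $[2n]$; order its $n$ edges arbitrarily as a necklace with $n$ beads, each bead being an edge of $M_1$. I want to select a sub-matching using roughly an $a_1/n$ fraction that is ``spread out'' so the remaining colours still have room. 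Apply necklace splitting with $2$ thieves and the single measure given by $M_1$'s beads (trivial) — this is too weak. The right formulation: consider \emph{all} the colours simultaneously as $\ell$ measures on a common ground set. To get a common ground set, use the line graph / take the ground set to be $V(K_{2n}) = [2n]$ itself, and for each colour $i$ let $\mu_i$ be the measure that assigns mass to vertices according to which $M_i$-edge covers them. Then a splitting of $[2n]$ (ordered along some Hamilton path $P$ of $K_{2n}$) into $t$ intervals with $\ell(t-1)$ cuts that $t$-partitions each $\mu_i$ evenly gives, for each colour $i$, one interval $I_i$ on which $M_i$ restricted to $I_i$ has $\approx n/t$ edges fully inside $I_i$. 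Choosing the intervals $I_1,\dots,I_\ell$ pairwise disjoint (possible if $t \ge \ell$, by a greedy/rainbow-selection argument since each colour has a whole interval to choose from and there are $t$ intervals), the matching $M = \bigcup_i (M_i \cap \binom{I_i}{2})$ is a single matching with $|M \cap M_i| \gtrsim n/t$. Picking $t$ so that $n/t \ge \max_i a_i$ would be ideal, but since the $a_i$ may be unequal we instead want a \emph{weighted} split.

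\medskip

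\emph{Step 3 (weighted/consecutive splitting to handle unequal $a_i$).} Replace even splitting by the consecutive-division version of necklace splitting: we want to cut the ordered ground set into $k$ consecutive blocks $B_1,\dots,B_k$ such that block $B_i$ captures at least $a_i$ edges of $M_i$ entirely within $B_i$. This is exactly a ``consensus-halving''/Hobby--Rice type statement: with $k$ measures $\mu_1,\dots,\mu_k$ on an interval and prescribed targets $a_i$ with $\sum a_i < (1-\varepsilon) n \le \sum \mu_i(\text{whole})$, one can find $k$ disjoint consecutive blocks with $\mu_i(B_i) \ge a_i$, provided we allow a bounded number of ``wasted'' beads between blocks — and here $\varepsilon n$ of slack absorbs both the wasted beads and the at most $2\ell = O(n^{1-\varepsilon})$ edges that straddle a cut (these straddling edges are exactly the loss when passing from $\mu_i(B_i)$ counting covered vertices to counting edges inside $B_i$). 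This is where $\varepsilon$ is spent, and where the hypothesis $a_i \ge n^\varepsilon$ is \emph{not} actually needed for the splitting but is needed to make the relative error $o(1)$ per colour negligible against $a_i$; I will double-check whether the lower bound on $a_i$ is used elsewhere (I suspect it guarantees $k \le n^{1-\varepsilon}$, keeping the number of cuts small). The slack $\varepsilon n$ comfortably dominates $O(k) = O(n^{1-\varepsilon})$ straddling/wasted edges, so the targets are all met.

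\medskip

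The main obstacle I anticipate is \emph{Step 2's common ground set}: the matchings share vertices, so there is no single necklace whose beads are simultaneously beads for every $M_i$; one must either (a) work on the vertex set $[2n]$ with each $M_i$ giving a measure and then pay for straddling edges at cut points, carefully bounding that loss by $O(\ell)$, or (b) find a Hamilton path/cycle $P$ of $K_{2n}$ and linearly order $[2n]$ along it, so that ``consecutive block'' makes sense for all colours at once — and then argue that relative to $P$, each $M_i$ still has $\approx |B|/2$ of its edges inside a block $B$ of size $|B|$ up to an $O(1)$ error per cut. Handling this cleanly — and making sure the disjointness of the blocks $B_i$ is automatic from consecutiveness rather than requiring a separate rainbow-matching selection — is the crux. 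Once the splitting is in place, the assembly of $M$ and the arithmetic $\sum a_i + O(\ell) < n$ are routine.
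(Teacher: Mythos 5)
Your proposal rests on a necklace split where the ground set is the vertex set $[2n]$ (ordered along a Hamilton path) and the ``beads'' of each colour $i$ come from the matching $M_i$. This does not work, and the two fixes you sketch in your final paragraph do not repair it. If $\mu_i$ assigns mass to vertices ``according to which $M_i$-edge covers them,'' then, since $M_i$ is a perfect matching, every vertex gets the same mass, so all $\ell$ measures are identical and the necklace theorem gives you nothing about the individual colours. If instead you try to count edges of $M_i$ inside a block $B$ of the ordering, the loss is not $O(1)$ per cut or $O(\ell)$ total: a block $B$ of size $m$ can contain \emph{zero} edges of $M_i$ (e.g.\ $M_i=\{\{j,j+n\}:j\in[n]\}$ and $B=[m]$ with $m\le n$), so the straddling count is $\Theta(|B|)$, not $O(1)$. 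The slack $\varepsilon n$ does not absorb this, and there is no Hamilton path adapted simultaneously to $\ell$ arbitrary matchings. The underlying issue is that there is no natural way to assign necklace types to vertices so that a consecutive block yields a large sub-matching in every colour at once.

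The paper's route is genuinely different and circumvents this obstacle by never trying to split all colours simultaneously. It peels the matchings off one at a time: starting from $\{a_i\}_{i=1}^{k}$, it repeatedly removes the largest term $a_{k-j,j}$ and rescales the remaining sequence by a factor close to $\tfrac{1+\lambda}{1-m_j}$ (inequality~(\ref{eq:ii4})), stopping once all terms drop below $n^{1-\delta}$ so that Theorem~\ref{thm:main1} applies as a base case. The step that undoes one peeling (Lemma~\ref{lemma:nice->nice}) is where Alon's theorem enters, and the necklace is built on the right ground set: the graph $G=\bigcup_{i=1}^{t}B'_i\ \cup\ B'_{t+1}$ is a union of \emph{two} matchings, hence a disjoint union of paths and cycles; passing to its line graph makes the \emph{edges} of $G$ the beads, with a natural $(t+1)$-colouring, and the cycles are broken and paths concatenated to form a single necklace. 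After splitting among $q$ thieves and partitioning the thieves into ``new'' and ``old,'' the only incidences to repair are the conflict pairs, which are genuinely bounded by $2(q-1)(t+1)$ (Claim~\ref{claim:conflictpairs}) because each conflict pair is charged to a cut. That edge-as-bead construction, and the one-matching-at-a-time recursion, are exactly the two ideas your proposal is missing. Also note that the lower bound $a_i\ge n^{\varepsilon}$ is essential throughout the recursion (it controls the number $s< S$ of peeling steps in Claim~\ref{claim:ail2} and the rounding errors in Claims~\ref{claim:ail1} and~\ref{claim:nice->nice}), not just a convenience as you suggest.
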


		The proof of Theorem~\ref{thm:main2} relies on Theorem~\ref{thm:main1} and Alon's necklace theorem~\cite{Alon} and is presented in Section~\ref{section:main2}.
	
		For sequence $\mathcal{A}=\{a_1,\ldots,a_k\}$ such that $a_i\geq\alpha n$ for some real $\alpha$, it can be deduced from Theorem~\ref{thm:main2} that $\mathcal{A}$ is $(0,n)$-happy (see Corollary~\ref{cor:main}). In particular for the problem stated in the abstract, Corollary~\ref{cor:main} of Theorem~\ref{thm:main2} implies that for any $\varepsilon>0$ and large enough $n$, if disjoint perfect matchings $M_1, M_2, M_3$ of $K_{n,n}$ are given, then there is a matching $M$ such that $|M\cap M_i|\geq (\frac{1}{3}-\varepsilon)n$.

		In Section~\ref{section:STS} we show how Theorem~\ref{thm:main} is related to a problem of finding large hypertree in a Steiner triple system. A hypertree is a connected, simple $3$‐uniform hypergraph in which every two vertices are joined by a unique path. A Steiner triple system is a $3$‐uniform hypergraph in which every pair of vertices is	contained in exactly one edge. Easy greedy-type argument shows that any hypertree with at most $n$ vertices can be embedded into any Steiner triple system with $2n+1$ vertices. Elliot and the second author~\cite{ER} conjectured that the same is true even for larger hypertrees.
	\begin{conjecture}\label{conj:ER}
		Given $\varepsilon>0$ there is $n_{0}$, such that for any $n\geq n_0$, any hypertree $T$ of order $n$ and any Steiner triple system of order at least $(1+\varepsilon)n$, $S$ contains $T$ as a subhypergraph.
	\end{conjecture}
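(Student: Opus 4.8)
The plan is to deduce Conjecture~\ref{conj:ER} (or as wide a range of hypertrees as the colourful matching theorem permits) from Theorem~\ref{thm:main}, by viewing an embedding of a hypertree $T$ into a Steiner triple system $S$ as an iterative process that, at each step, extends a partial embedding by a whole \emph{bundle} of edges at once, and by realising each such bundle extension as a colourful matching in a union of link matchings of $S$. Concretely, fix a building sequence of the edges of $T$: an ordering $e_1,\dots,e_m$ with $m=(n-1)/2$ such that each $e_j$ with $j\ge 2$ meets $e_1\cup\cdots\cup e_{j-1}$ in exactly one vertex, which exists because a linear hypertree can be stripped leaf edge by leaf edge. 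Equivalently, root $T$ and build $\phi$ generation by generation, each edge $e_j=\{x_j,a_j,b_j\}$ attaching two fresh vertices $a_j,b_j$ to an already embedded vertex $x_j$. The naive greedy embedding works only while the set $U$ of used vertices of $S$ has size below $\tfrac{|V(S)|-1}{2}$, since a used vertex kills at most one triple through $\phi(x_j)$; this is exactly why greedy alone reaches only about half the vertices of $S$ and why a genuinely new ingredient is needed to get to $n=(1-o(1))|V(S)|$.

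The key step is the following reduction. For a vertex $v$ of $T$ with image $w=\phi(v)$ already chosen, let $d_v$ be the number of edges $e_j$ with $x_j=v$; placing all of them at once amounts to choosing $d_v$ pairwise disjoint edges of the link matching $L_w=\{\{a,b\}:\{w,a,b\}\in S\}$ avoiding $U$. Doing this simultaneously for vertices $v_1,\dots,v_k$ of $T$ whose images $w_1,\dots,w_k$ are fixed, one uses that the link matchings $L_{w_1},\dots,L_{w_k}$ are pairwise edge-disjoint (a pair lying in two links would lie in two triples of $S$) and are near-perfect matchings on the common set $V(S)\setminus\{w_1,\dots,w_k\}$; after deleting the finitely many already-used vertices and padding each to a perfect matching of a $K_{2N'}$ with $N'=\tfrac{|V(S)|-k}{2}$, this is exactly the setting of Definition~\ref{def:enhappy}, and we must find one matching $M$ with $|M\cap L_{w_i}|\ge d_{v_i}$ for all $i$. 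Since $\sum_i d_{v_i}\le m=(n-1)/2$, which is comfortably below $(1-\varepsilon')N'$ once $|V(S)|\ge(1+\varepsilon)n$ and $\varepsilon'$ is chosen small relative to $\varepsilon$ (this is where the slack $(1+\varepsilon)n$ in the conjecture is spent), Theorem~\ref{thm:main} applies: case (i) for the bundles that are small, $d_{v_i}\le n^{1-\varepsilon'}$, and case (ii) for those that are large, $d_{v_i}\ge n^{\varepsilon'}$. The flexibility built into $(\varepsilon',N')$-happiness — that only $k$ of the $\ell\le(1+\varepsilon')k$ available links need to be usable — lets us simply discard any link matchings that are already too crowded.

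To run this globally, organise $T$ as follows. Split $V(T)$ into \emph{light} vertices (degree $\le n^{\varepsilon'}$) and \emph{heavy} vertices (degree $>n^{\varepsilon'}$), noting that there are only $O(n^{1-\varepsilon'})$ heavy ones; embed first the minimal sub-hypertree of $T$ spanned by the heavy vertices, together with the $O(n^{1-\varepsilon'})$ vertices of degree $>n^{1-\varepsilon'}$ (a small separate application of the reduction above, or even a greedy argument for the very heavy ones, leaves ample room). Then fill in, generation by generation from this skeleton, applying the colourful-matching reduction at each generation — the light bundles through case (i), the heavy ones through case (ii) — and feed the newly placed images into the next round's collection of centres.

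The hard part will be the bookkeeping that makes the vertex budget close. A single application of Theorem~\ref{thm:main} is harmless, but a hypertree such as a long loose path forces $\Theta(n)$ successive generations and has essentially no heavy vertices, so one cannot afford to lose even a constant fraction of the slack per step; one must either run the process so that $U$ never exceeds what the matching theorem tolerates at the moment it is invoked (keeping $|U|$ a $(1-\Omega(\varepsilon))$ fraction of $|V(S)|$ away from the danger zone throughout), or handle the ``path-like'' pieces of $T$ by a more global argument that selects many extension steps inside one colourful matching rather than one generation at a time. A secondary, purely technical obstacle is carrying out the padding of the link matchings into honest perfect matchings of a complete graph while preserving their edge-disjointness, which may force a mild strengthening of Theorem~\ref{thm:main} to near-perfect matchings.
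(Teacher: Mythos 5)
The statement you have been asked to prove is labelled a \emph{conjecture} in the paper, and the paper does not prove it. What the paper actually proves (Theorem 5.3) is the conjecture restricted to a narrow family of hypertrees called \emph{turkeys}: a constant-size core hypertree $C$ with a bounded number $k$ of vertex-disjoint hyperstars of linear size attached. In that special case the embedding reduces to a \emph{single} application of Corollary~\ref{cor:main}: embed $C$ greedily (costs only $O(1)$ vertices), pass to the $k$ edge-disjoint link matchings of the images of the centres, pad with one fake vertex to make perfect matchings of $K_{2n'}$, invoke the corollary, and throw away the $O(1)$ fake edges. Your reduction to link matchings of embedded vertices is exactly this idea, so for turkeys you have rediscovered the paper's argument.

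The problem is the extension to general hypertrees, and the gap you flag at the end is genuine, not merely bookkeeping. Consider a loose path: every vertex has degree at most $2$, so every ``bundle'' $d_{v_i}$ is $1$ or $2$, each generation from the skeleton consists of a single vertex extending to one or two fresh edges, and the colourful-matching step degenerates to asking for \emph{one or two} edges from a \emph{single} link matching avoiding the set $U$ of used vertices. That is exactly the naive greedy move; Theorem~\ref{thm:main} gives you nothing you did not already have, because with one colour and $a_1\in\{1,2\}$ the statement is trivial. Greedy for a loose path runs out of room once $|U|$ approaches $|V(S)|/2$, which is roughly $(1+\varepsilon)n/2 < n$ for small $\varepsilon$. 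So your scheme reaches only about half of $V(S)$ on path-like hypertrees, the same wall as pure greedy. Keeping $|U|$ ``away from the danger zone'' is not possible when the hypertree forces $\Theta(n)$ single-edge steps; and restructuring $T$ so that many extensions sit inside one colourful matching is precisely the missing idea --- it would require simultaneously choosing link matchings at vertices whose images are not yet determined, which is a circular dependence. This is why the paper's result is confined to turkeys, where the entire vertex budget is spent in one colourful-matching call centred at a constant number of already-placed vertices; the paper explicitly poses the general conjecture as open. (The cited result of Elliott and R\"odl~\cite{ER} for subdivision hypertrees, which handles the path-like regime, uses a different method not via colourful matchings.)

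A secondary point: your splitting of bundles into ``small'' (case (i)) and ``heavy'' (case (ii)) across a \emph{single} application is also problematic, since Theorem~\ref{thm:main} requires either \emph{all} $a_i\le n^{1-\varepsilon}$ or \emph{all} $a_i\ge n^{\varepsilon}$; mixing the two regimes within one call is not covered (the paper's concluding remarks note exactly this limitation). Handling the two classes in separate calls then runs back into the same compounding-slack problem.
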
	
		In~\cite{ER} authors proved Conjecture~\ref{conj:ER} for subdivision hypertree. With the proof based on Corollary~\ref{cor:main}, in Section~\ref{section:STS} we will verify the Conjecture~\ref{conj:ER} for another infinite family of hypertrees, which we call ``turkeys''.
	
	\section{Preliminaries}
	
	For positive integer $k$ let $[k]=\{1,\ldots, k\}$ and $[0,k]=\{0,\ldots, k\}$. We write $x=y\pm z$ if $x\in[y-z,y+z]$.
	
	We use the following version of Chernoff's bound (this is a corollary of Theorem 2.1 of Janson, \L uczak, Rucinski~\cite{JLR}).
	
	\begin{theorem}\label{thm:Chernoff}
		Let $X\sim \text{Bi}(n,p)$ be a binomial random variable with the expectation $\mu$, then for $t\leq 3\mu$
		$$\mathbb{P}(|X-\mu|>t)\leq 2e^{-t^{2}/(4\mu)}.$$
	\end{theorem}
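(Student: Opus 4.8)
The plan is to derive both tail estimates from the standard Chernoff-type inequalities recorded as Theorem~2.1 in Janson, \L uczak and Ruci\'nski~\cite{JLR}, and then combine them by a union bound. Since $X\sim\mathrm{Bi}(n,p)$ is a sum of $n$ independent Bernoulli$(p)$ random variables, that theorem applies directly and yields, for every $t\ge 0$,
$$\mathbb{P}(X\ge \mu+t)\le \exp\!\left(-\frac{t^{2}}{2(\mu+t/3)}\right)\qquad\text{and}\qquad \mathbb{P}(X\le \mu-t)\le \exp\!\left(-\frac{t^{2}}{2\mu}\right).$$

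For the upper tail I would invoke the hypothesis $t\le 3\mu$: then $t/3\le\mu$, so $\mu+t/3\le 2\mu$, hence $2(\mu+t/3)\le 4\mu$, which gives $\mathbb{P}(X\ge\mu+t)\le \exp(-t^{2}/(4\mu))$. For the lower tail, the bound $\exp(-t^{2}/(2\mu))$ is already at most $\exp(-t^{2}/(4\mu))$, with no restriction on $t$ needed. Since $\{|X-\mu|>t\}\subseteq\{X\ge\mu+t\}\cup\{X\le\mu-t\}$, adding the two estimates yields $\mathbb{P}(|X-\mu|>t)\le 2\exp(-t^{2}/(4\mu))$, as claimed.

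There is no genuine obstacle here: the only point deserving attention is that the constant $3$ in the hypothesis $t\le 3\mu$ is exactly what is needed to absorb the $t/3$ term in the denominator of the upper-tail bound into an extra factor of $2$, and that the lower tail is automatically at least this strong. If one preferred a self-contained argument, the same conclusion follows from the exponential moment method --- bounding $\mathbb{E}e^{sX}=(1-p+pe^{s})^{n}\le \exp(\mu(e^{s}-1))$ and optimizing over $s>0$ (and likewise for $-X$) --- but quoting~\cite{JLR} is cleaner and is the route I would take.
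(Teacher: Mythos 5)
Your derivation is correct, and it is exactly the route the paper has in mind: the paper simply states that this is a corollary of Theorem~2.1 of~\cite{JLR} without writing out the calculation, and your argument (absorbing the $t/3$ in the denominator using $t\le 3\mu$, observing the lower tail is automatically stronger, and applying the union bound) is the intended derivation.
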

	The proof of Theorem~\ref{thm:main1} relies on the application of a result by Alon, Kim and Spencer~\cite{AKS} on existence of almost perfect matching in 3-uniform simple hypergraphs. We use a version of this result stated by Kostochka and R\"{o}dl~\cite{KR}\footnote{We refer to Theorem 3 from that paper. There is a typo in the conclusion part of that theorem, where instead of $O(ND^{1/2}\ln^{3/2}D)$ there should be $O(ND^{-1/2}\ln^{3/2}D)$}.
	\begin{theorem}\label{thm:AKS}
		For any $K>0$ there exists $D_0$ such that the following holds. Let $H$ be a 3-uniform simple hypergraph on $N$ vertices and $D\geq D_0$ be such that $deg(x)=D\pm K\sqrt{D \ln D}$ for all $x\in V(H)$. Then $H$ contains a matching on $N-O(ND^{-1/2}\ln^{3/2}D)$ vertices. 
	\end{theorem}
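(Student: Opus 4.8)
The plan is to establish Theorem~\ref{thm:AKS} by the semi-random method (the ``R\"odl nibble''): we build the matching $M$ greedily over $T=\Theta(\ln D)$ rounds, adding in each round a sub-matching chosen from a sparse random subset of the edges that still survive, and deleting the newly covered vertices. The invariant carried through the process is that the surviving sub-hypergraph $H_t$, on $N_t$ vertices, is quasi-regular in the same sense as the hypothesis, $\deg_{H_t}(x)=D_t\pm K_t\sqrt{D_t\ln D_t}$ for every surviving $x$, where $D_t$ decreases geometrically and the error constant $K_t$ increases only very slowly. The role of the simplicity (linearity) hypothesis is precisely that any two edges through a common vertex meet in nothing else, which makes the pairwise ``interaction'' terms in all the relevant computations lower order and hence lets the invariant propagate.

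\emph{One round.} Given $H_t$, we put each of its edges independently into a random set $E_t$ with probability $p_t=\eta/D_t$, $\eta$ a small fixed constant, and let $M_t$ consist of the edges of $E_t$ meeting no other edge of $E_t$; then we delete the $3|M_t|$ vertices covered by $M_t$, and all edges through them, to obtain $H_{t+1}$. A selected edge conflicts with another selected edge with probability $O(\eta)$, so $\mathbb{E}|M_t|=(1-O(\eta))\eta N_t/3$ and $\mathbb{E}N_{t+1}=(1-\eta+O(\eta^2))N_t$. For $x$ surviving round $t$, each edge $\{x,y,z\}\ni x$ lies in $H_{t+1}$ essentially iff $y$ and $z$ both survive, which by linearity happens roughly independently, with probability $\approx e^{-2\eta}$; summing over the $\approx D_t$ edges at $x$ gives $\mathbb{E}\deg_{H_{t+1}}(x)=(1-2\eta+O(\eta^2))D_t=:D_{t+1}$, with variance $O(D_t)$.

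\emph{Concentration and propagation.} We expose the edge-selection variables one by one and apply a bounded-differences / Azuma inequality; linearity again guarantees that flipping a single edge changes $|M_t|$, $N_{t+1}$ and each individual degree by only a bounded amount, so these statistics are sub-Gaussian on the scale of the square root of their mean. Thus $|M_t|$ and $N_{t+1}$ are within $o(\cdot)$ of their expectations with very high probability, and the expected number of surviving vertices violating $\deg_{H_{t+1}}(x)=D_{t+1}\pm K_{t+1}\sqrt{D_{t+1}\ln D_{t+1}}$ is at most $N_t D_t^{-\Omega(K_{t+1}^2)}$, which for $K_{t+1}=K_t+o(1)$ is negligible against the target error. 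These few exceptional vertices --- and, to stop a cascade, the handful of further vertices that thereby lose too many of their edges --- are removed and thrown onto the ``uncovered'' pile. Iterating until $D_t$ first falls to roughly $\ln^3 D$, the geometric decays give $N_T=O(ND^{-1/2}\ln^{3/2}D)$, and we simply leave all of $H_T$, together with every vertex discarded along the way, uncovered; the matching $M=\bigcup_t M_t$ then covers $N-O(ND^{-1/2}\ln^{3/2}D)$ vertices, as claimed. (The $D^{-1/2}$ reflects that the standard deviation of a degree under one nibble step is of order $\sqrt{D_t}$, and the logarithmic powers are set by the concentration thresholds and by where the process is truncated.)

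The main obstacle is exactly this quantitative bookkeeping: $\eta$, the number of rounds, the truncation point and the thresholds $K_t$ must be chosen so that (a) the quasi-regularity invariant genuinely survives each round with $K_t$ staying bounded over all $\Theta(\ln D)$ rounds; (b) the two geometric rates --- that of $D_t$ and that of $N_t$, which differ by the constant factor $2$ coming from the two ``other'' vertices of a triple --- are reconciled so that the leftover $N_T$ and the cumulative discards both come in under $O(ND^{-1/2}\ln^{3/2}D)$; and (c) the deletion of exceptional vertices in one round does not destroy quasi-regularity for the remaining ones, which is where the linearity hypothesis is used one final time. The delicate regime is the last few rounds, when $D_t$ is small and the invariant is fragile --- stopping the nibble while $D_t$ is still $\mathrm{polylog}(D)$, rather than a constant, is what keeps the error within the stated bound.
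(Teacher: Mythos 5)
The paper does not prove Theorem~\ref{thm:AKS}; it is imported as a black box, quoted in the form given by Kostochka and R\"odl~\cite{KR}, who in turn attribute it to the semi-random argument of Alon, Kim and Spencer~\cite{AKS}. So there is no in-paper proof for your attempt to be measured against. That said, your sketch does follow the same strategy as the original AKS proof: iterated random sparsification of a linear hypergraph, with quasi-regularity propagated through the rounds by martingale concentration and a slowly drifting error constant, truncating once the surviving degree is polylogarithmic. The numerology in your outline is also consistent with the target bound: with selection probability $\eta/D_t$ the degree decays by roughly $e^{-2\eta}$ per round while the vertex count decays by $e^{-\eta}$, so iterating until $D_t\approx\ln^3 D$ leaves about $N(\ln^3 D/D)^{1/2}=ND^{-1/2}\ln^{3/2}D$ uncovered vertices, which is exactly the stated error term. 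As written, however, this is an outline rather than a proof. The hard part --- and the actual content of the Alon--Kim--Spencer paper --- is precisely what you defer in your final paragraph: proving that the constant $K_t$ stays bounded across all $\Theta(\ln D)$ rounds, making the ``roughly independent'' heuristic rigorous via an explicit codegree/cluster bookkeeping (this is where simplicity is used quantitatively, not just morally), and showing that discarding exceptional vertices at each step does not cascade. None of these is established in your sketch, so it should be read as a description of the known proof rather than a self-contained argument.
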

	
	Here the constant in $O$-notation is depending on $K$ only an is independent of $N$ and $D$. 
	
	The following lemma is the main tool used for the proof of Theorem~\ref{thm:main1}. In Lemma~\ref{lemma:AKS2} the constant in $O$-notation is depending on $K$ and $\delta$ only an is independent of $m$ and $D$.	
	\begin{lemma}\label{lemma:AKS2}
		For any constant $K$ and $\delta$ there is $D_0$ such that the following holds for any $D\geq D_0$, $\Delta=K\sqrt{D\ln D}$ and an integer $m$. Let $G$ be a graph that is a union of edge disjoint matchings $M_1, \ldots, M_b$ such that:
		\begin{itemize}
			\item[(a)] $deg(v)=D\pm \Delta$ for all $v\in V(G)$.
			\item[(b)]  $|M_i|=Dm\pm \Delta m/2$ for all $i\in[b]$.
		\end{itemize}
		Then among matchings $M_1,\ldots, M_b$ there are at least $b^{'}=b\left(1-O( \frac{\ln^{3/2}D}{D^{1/2}})\right)$ of them, say $M_1, \ldots, M_{b^{'}}$ and a matching $M$ in $G$ such that $|M\cap M_i|\geq \frac{m}{1+\delta}$ for all $i\in [b']$.
	\end{lemma}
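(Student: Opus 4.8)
The plan is to build an auxiliary $3$-uniform hypergraph $H$ whose almost-perfect matching, guaranteed by Theorem~\ref{thm:AKS}, translates into the desired matching $M$ together with the guarantee that all but a $O(\ln^{3/2}D/D^{1/2})$ fraction of the $M_i$ are hit at least $m/(1+\delta)$ times. The natural construction is to split each matching $M_i$ into roughly $D(1+\delta')$ groups of consecutive edges, each group of size about $m/(1+\delta')$ for a suitable $\delta'<\delta$, and then form hyperedges that combine one edge from several different $M_i$'s — but since the straightforward "one edge per colour" approach lands us back at a rainbow-matching problem (which is exactly the hard open case), the better route is to let the vertex set of $H$ be $V(G)$ together with the index set of the groups, and let a hyperedge encode "this edge of $G$, together with the group it belongs to."

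Concretely, first I would fix $\delta' = \delta/2$ and partition, for each $i \in [b]$, the edge set $M_i$ into $t_i := \lceil D(1+\delta') \rceil$ blocks $B_{i,1},\ldots,B_{i,t_i}$, each of size $\big(|M_i|/t_i\big)\pm 1$; by hypothesis (b) this common block size is $m/(1+\delta') \pm O(\Delta m / D) = m/(1+\delta')\,(1+o(1))$, which for $D$ large is comfortably at least $m/(1+\delta)$. Next I would form the hypergraph $H$ on vertex set $V(G) \;\cup\; \{(i,j): i\in[b],\, j\in[t_i]\}$, where for every edge $uv \in B_{i,j}$ we put the triple $\{u, v, (i,j)\}$ into $H$. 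This $H$ is $3$-uniform; it is simple because $u,v$ determine the edge $uv$ hence the unique pair $(i,j)$ containing it; the degree of a vertex $v\in V(G)$ is exactly $\deg_G(v) = D \pm \Delta$; and the degree of a "block vertex" $(i,j)$ is $|B_{i,j}| = m/(1+\delta')(1+o(1))$. To apply Theorem~\ref{thm:AKS} I need all degrees concentrated around a common value $D$, so I would pass to the derived hypergraph in which each block vertex is "cloned" appropriately, or — cleaner — I would instead only run Theorem~\ref{thm:AKS} after rescaling: replace each block vertex by $\lceil D(1+\delta')/ (m/(1+\delta'))\rceil$-fold identification is awkward, so the cleanest fix is to take $m$ copies of each vertex of $V(G)$ and define the blocks so that the block vertices also have degree $\approx D$. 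Either way the point is that with the block size chosen as $\Theta(m)$ and the number of blocks per colour chosen as $\Theta(D)$, the two families of vertices can be made to have degree $D \pm O(\sqrt{D\ln D})$, so Theorem~\ref{thm:AKS} yields a matching $\mathcal{M}$ in $H$ covering all but $O\big((\text{number of vertices})\,D^{-1/2}\ln^{3/2}D\big)$ vertices.

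The matching $\mathcal{M}$ in $H$ projects to a matching $M$ in $G$ (the $V(G)$-coordinates of the chosen triples are pairwise disjoint since $\mathcal{M}$ is a matching). Moreover $\mathcal{M}$ covers all but a $O(D^{-1/2}\ln^{3/2}D)$ fraction of the block vertices $(i,j)$, and each covered block vertex contributes one edge of $M \cap M_i$ lying in that particular block $B_{i,j}$. Call a colour $i$ \emph{bad} if at least one of its $t_i = \Theta(D)$ blocks is uncovered, and \emph{good} otherwise; then for a good colour $i$ every block of $M_i$ contributes a distinct edge to $M\cap M_i$, so $|M\cap M_i| \ge t_i \cdot \big(\text{min block size}\big)/t_i \ge m/(1+\delta')\cdot(1+o(1)) \ge m/(1+\delta)$ for $D$ large. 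Finally, by double counting, the number of bad colours is at most the number of uncovered block vertices, which is $O\big(bD\cdot D^{-1/2}\ln^{3/2}D\big)$, but this is off by a factor of $D$ from what we want — so the crux is to bound the number of bad colours by $O\big(b\cdot D^{-1/2}\ln^{3/2}D\big)$, not merely the number of uncovered blocks. This forces a more careful version of the argument: one should not demand that \emph{all} blocks of a good colour be covered, but rather use slightly more than $m$ blocks (say $(1+\delta)m/m \cdot$ extra slack) so that a colour counts as good as long as it retains $m/(1+\delta)$ covered blocks; then a colour is bad only if it has $\Omega(\delta D)$ uncovered blocks, and the double-counting bound $(\#\text{bad colours})\cdot \Omega(\delta D) \le \#\text{uncovered blocks} = O(bD\cdot D^{-1/2}\ln^{3/2}D)$ gives exactly $\#\text{bad colours} = O_\delta(b\, D^{-1/2}\ln^{3/2}D)$, as required. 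This averaging trick — trading a small constant-factor loss in the per-colour count for a full factor-$D$ gain in the count of satisfied colours — is, I expect, the main technical point of the lemma; the rest is bookkeeping on the degree conditions so that Theorem~\ref{thm:AKS} applies.
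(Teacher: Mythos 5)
Your high-level strategy matches the paper's: split each $M_i$ into blocks, introduce a block vertex per block, extend each edge to a triple through its block vertex, apply Theorem~\ref{thm:AKS}, and finish by averaging. But the crucial parameter choice is inverted, and this breaks the argument rather than merely making it awkward.

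You split $M_i$ into $t_i\approx D(1+\delta')$ blocks of size $\approx m/(1+\delta')$, so the block vertices have degree $\Theta(m)$ while the $V(G)$-vertices have degree $\Theta(D)$. Theorem~\ref{thm:AKS} requires essentially uniform degrees, so it simply does not apply, and the fixes you float (cloning, taking $m$ copies of each $V(G)$-vertex, ``rescaling'') are not carried through and do not obviously reconcile the two degree scales. The correct split is the opposite one: each $M_i$ has $|M_i|=Dm\pm\Delta m/2$ edges, and you should cut it into $m$ blocks of size $\approx D$. Then every block vertex has degree $D\pm\Delta$, exactly like the $V(G)$-vertices, the resulting $3$-uniform hypergraph is simple and nearly $D$-regular on $\Theta(mb)$ vertices, and Theorem~\ref{thm:AKS} applies directly with error fraction $O(D^{-1/2}\ln^{3/2}D)$ --- which is the error term the lemma promises. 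Your choice, even if it could be made to work, would yield an error in terms of $m$, not $D$.

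There is a second, independent confusion. Each covered block vertex contributes exactly one edge to $M\cap M_i$, so for a colour with all its blocks covered one gets $|M\cap M_i|\ge t_i$, the \emph{number of blocks}, not the block size. Your chain ``$|M\cap M_i|\ge t_i\cdot(\text{min block size})/t_i\ge m/(1+\delta')$'' cancels to ``block size,'' which is not what the count yields; under your split it would give $|M\cap M_i|\gtrsim D$, and $D\geq m/(1+\delta)$ is not assumed. With the correct split (block size $\approx D$, \emph{$m$} blocks), ``number of covered blocks'' is at most $m$ and is $\ge m/(1+\delta)$ precisely for the good colours, which is what you want.

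Your final averaging observation --- that one should bound the number of colours with $\Omega(\delta m)$ uncovered blocks rather than demand perfect coverage, gaining a factor of $D$ (or rather $\Theta(m)$ in the correct split) --- is exactly the right closing move, and it mirrors the paper's ``deficit'' computation: set $d_i=m-|M\cap M_i|$, note $\sum_i d_i=O(mb\,D^{-1/2}\ln^{3/2}D)$, and conclude that at most $O(b\,D^{-1/2}\ln^{3/2}D)$ indices have $d_i>\delta m/(1+\delta)$. So the missing piece is the correct block geometry: $m$ blocks of size $D$, not $D$ blocks of size $m$.
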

	\begin{proof}
	We start with proving the following quick consequence of Theorem~\ref{thm:AKS}.
	\begin{claim}\label{claim:AKS}
		For any constant $K$ there is $D_0$ such that the following holds for any $D\geq D_0$, $\Delta=K\sqrt{D\ln D}$ and integer $m$. Let $G$ be a graph that is a union of edge disjoint matchings $M_1, \ldots, M_b$ such that:
		\begin{itemize}
			\item[(a)] $deg(v)=D\pm \Delta$ for all $v\in V(G)$.
			\item[(b)]  $|M_i|=Dm\pm \Delta m/2$ for all $i\in[b]$
		\end{itemize}
		Then there are is a matching $M$ in $G$ containing at least $mb\left(1-O(\frac{\ln ^{3/2}D}{D^{1/2}})\right)$ edges such that $|M \cap M_i|\leq m$ for all $i\in[b]$.
	\end{claim}
	\begin{proof}[Proof of Claim~\ref{claim:AKS}]
		Let $D_0$ be provided by Theorem~\ref{thm:AKS} with a constant $K$ as input. Assume $D_0$ is large enough so that $D>4\Delta$. Let $G$ be a graph with vertex set that is a union of edge disjoint matchings $M_1,\ldots, M_b$. Number of edges in $G$ is equal to $|X|(D\pm \Delta)/2$ (by assumption (a))  and to $\sum_{i=1}^{b}|M_i|=mb(D\pm\Delta/2)$ (by (b)). Hence we may conclude that $|X|=2mb\pm mb$.
		
		For a given $G$ construct a 3-uniform hypergraph $H$ by arbitrarily splitting edges of each $M_i$ into $m$ groups $C_{i}^1, \ldots, C_{i}^{m}$ with $(D\pm \Delta)$ edges in each group (recall (b)). Note, that splitting $Dm\pm \Delta m/2$ edges into $m$ groups of size $(D\pm \Delta/2)$ might not be possible, as it may be the case that $|M_i|=Dm-\frac{\Delta m}{2}$ is an integer, but $D-\Delta/2$ is not.
		
		For each such group $C^j_{i}$ add a new vertex $y^j_{i}$ and construct $C^{j}_i$ hyperedges  (triples) by extending each edge in $C^{j}_i$ by the vertex $y_{i}^{j}$. Let $Y$ be a set of new vertices, then $|Y|=mb$ (as each $M_i$ produces $m$ new vertices). The hypergraph $H$ constructed in this way has $N=|X|+|Y|=\Theta(mb)$ vertices, is linear and each hyperedge has nonempty intersection with $Y$. Degree of each vertex of $H$ is $D\pm \Delta$, as every $y_{i}^j$ has degree $|C_{i}^j|=D\pm \Delta$ and degrees of vertices of $X$ in $H$ are same as in $G$.
		
		Apply Theorem~\ref{thm:AKS} to hypergraph $H$ (with $N=\Theta(mb)$) to obtain a 3-uniform matching $M'$ that misses at most $O(mbD^{-1/2}\ln^{3/2}D)$ vertices. In particular $M'$ misses at most $O(mbD^{-1/2}\ln^{3/2}D)$ vertices in $Y$. Recall that $|Y|=mb$ and each hyperedge of $H$ uses only one vertex of $Y$, so $M'$ has at least $mb-O(mbD^{-1/2}\ln^{3/2}D)$ edges. Let $M$ be a (2-uniform) matching obtained from $M'$ by deleting a vertex in $Y$ from each hyperedge of $M'$. Then $$|M|=|M'|=mb-O(mbD^{-1/2}\ln^{3/2}D).$$ For each $i\in [b]$, matching $M$ intersects $M_i$ in at most $m$ edges, as edges of $M_i$ are extended to a union of $m$ stars in $H$ centered at vertices $y_{i}^j$. Therefore, $M$ is a matching such that contains at least $mb-O(mbD^{-1/2}\ln^{3/2}D)$ edges and such that $|M\cap M_i|\leq m$ for all $i\in[b]$.
	\end{proof}
		Now, to prove Lemma~\ref{lemma:AKS2} we first apply Claim~\ref{claim:AKS} to the graph $G$ and obtain a matching $M$ such that $|M|=mb\left(1-O(\frac{\ln^{3/2}D}{D^{1/2}})\right)$and $|M\cap M_i|\leq m$.
		 
		Let $A_i=M\cap M_i$, and for each $A_i$ we define a ``deficit'' to be $d_i=m-|A_i|$. The total deficit of $A_1,\ldots, A_b$ is 
		$$\sum_{i=1}^{b} d_i=\sum_{i}^{b} \left(m-|A_i|\right)=mb-|M|=O\left(mb\frac{\ln ^{3/2}D}{D^{1/2}}\right).$$
		We now will find many sets $A_i$ with small deficit.
		
		We say that set $A_i$ is ``bad'' if it has size smaller than $m/(1+\delta)$, or in other words if its deficit $d_i$ is more than $\delta m/(1+\delta)$. Since the total deficit is $O(mb\frac{\ln ^{3/2}D}{D^{1/2}})$, the number of ``bad'' $A_i$'s is at most 
		$$\left. O\left(mb\frac{\ln ^{3/2}D}{D^{1/2}}\right)\middle/ \frac{\delta m}{1+\delta}\right.= O\left(b\frac{\ln ^{3/2}D}{D^{1/2}}\right).$$
		
		Finally, we deduce that there is a collection of at least $b^{'}=b-O(b\frac{\ln ^{3/2}D}{D^{1/2}})$ ``good'' sets $A_i$, say $A_1,\ldots, A_{b^{'}}$, such that $A_i\geq m/(1+\delta)$ for $i\in[b^{'}]$. This means that for matching $M$ and matchings $M_1, \ldots, M_{b^{\prime}}$ we have $|M\cap M_i|\geq \frac{m}{1+\delta}$, finishing the proof of the Lemma.
	\end{proof}

	\section{Proof of Theorem~\ref{thm:main1}}\label{section:main1}
	Theorem~\ref{thm:main1} follows from a slightly more technical Theorem~\ref{thm:main1:1}.
	\begin{theorem}\label{thm:main1:1}
		For any $\varepsilon<\frac{1}{10}$ there is $n_0$ such that for any $n\geq n_0$ the following holds. Let $\{a_{i}\}_{i=1}^{k}$ be a sequence such that
		\begin{itemize}
			\item[(a)] $\sum_{i=1}^{k}a_{i}<\left(1-\frac{\varepsilon}{2}\right)n$,
			\item[(b)] $a_i\leq n^{1-\varepsilon}$ for all $i\in[k]$,
			\item[(c)] $k\geq \frac{1}{3}n^{\varepsilon}$,
		\end{itemize}
		and additionally, if $k<n^{1/2+\varepsilon}$
		\begin{itemize}
			\item[(d)] $a_i\geq k^{-1}n^{1-\varepsilon/400}$ for all $i\in[k]$.
		\end{itemize}
		Then sequence $\{a_{i}\}_{i=1}^{k}$ is $(\varepsilon,n)$-happy.
	\end{theorem}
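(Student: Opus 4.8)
The plan is to produce the required matching as a disjoint union $M=\bigcup_{r=1}^{R}M^{(r)}$, where the $M^{(r)}$ live on pairwise disjoint vertex sets $V_1,\dots,V_R\subseteq V(K_{2n})$ and each $M^{(r)}$ comes from a single application of Lemma~\ref{lemma:AKS2} to the restriction to $V_r$ of a carefully chosen sub‑family of the given perfect matchings. I would fix small auxiliary constants $\delta=\eta=\varepsilon/100$ and $c=\varepsilon/2$, a constant $K=K(\varepsilon)$ large enough for the estimates below, and a constant $m^\ast=m^\ast(\varepsilon)$. First I would bucket the sequence geometrically: $I_r=\{i:a_i\in[(1+\eta)^{r-1},(1+\eta)^{r})\}$, so that there are $R=O(\varepsilon^{-1}\log n)$ nonempty buckets, $k_r:=|I_r|$, $\sum_r k_r=k$, and $\sum_r k_r(1+\eta)^{r-1}\le\sum_i a_i<(1-\tfrac\varepsilon2)n$. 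Writing $a^{(r)}:=(1+\eta)^{r-1}$ (so $a_i<(1+\eta)a^{(r)}$ for $i\in I_r$), I would put $p_r:=\lceil(n^{1+c}/a^{(r)})^{1/2}\rceil$, let $\ell_r:=(1+o(1))\max(k_r,p_r)$ with the $(1+o(1))$ chosen to absorb the loss of Lemma~\ref{lemma:AKS2} (concretely $\ell_r=\lceil\max(k_r,p_r)/(1-\rho_n)\rceil$ with $\rho_n$ an upper bound for that loss rate), and $n_r:=\lceil(1+\delta)(1+\eta)\,\ell_r\max(a^{(r)},m^\ast)\rceil$. Finally I would set $\ell:=\sum_r\ell_r$; this integer depends only on the sequence and on $n$, and I declare it to be the $\ell$ of Definition~\ref{def:enhappy}. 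Given any $\ell$ disjoint perfect matchings $M_1,\dots,M_\ell$ of $K_{2n}$, I would partition their indices into sets $J_1,\dots,J_R$ with $|J_r|=\ell_r$, intending to serve $I_r$ from $J_r$.

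Next I would label every vertex of $K_{2n}$ independently, with label $r$ of probability $n_r/n$ for $1\le r\le R$ (and an unused label of the residual probability, which is positive by the vertex budget below), let $V_r$ be the $r$‑th class, and set $G_r:=\bigcup_{i\in J_r}\big(M_i\cap\binom{V_r}{2}\big)$. Since the edges of any one matching are pairwise vertex disjoint, for fixed $v\in V_r$ the degree $\deg_{G_r}(v)$ is, conditionally on $v$'s label, a sum of $\ell_r$ independent indicators, and for fixed $i$ the quantity $|M_i\cap\binom{V_r}{2}|$ is a sum of $n$ independent indicators; so Theorem~\ref{thm:Chernoff} and a union bound over the $\mathrm{poly}(n)$ relevant events give, with positive probability, simultaneously for all $r$, all $i\in J_r$ and all $v\in V_r$, that $\deg_{G_r}(v)=D_r\pm K\sqrt{D_r\ln D_r}$ and $|M_i\cap\binom{V_r}{2}|=D_r m_r\pm\tfrac12 K\sqrt{D_r\ln D_r}\,m_r$, where $D_r\approx\ell_r n_r/n$ and $m_r\approx n_r/\ell_r\ge(1+\delta)(1+\eta)a^{(r)}$. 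The role of the floors $p_r$ and $m^\ast$ is exactly to force $D_r\ge n^{c}$ (so that $\ln D_r\ge\tfrac c2\ln n$ and the fixed $K$ converts the unavoidable $\sqrt{\ln n}$‑scale deviations into the $K\sqrt{D_r\ln D_r}$ slack permitted in Lemma~\ref{lemma:AKS2}) and $m_r\ge m^\ast$ (a constant large enough for the analogous conversion in hypothesis (b) of that lemma). Fixing such a labelling and applying Lemma~\ref{lemma:AKS2} to each $G_r$ with constants $K$ and $\delta$ then yields a matching $M^{(r)}\subseteq G_r$ together with at least $\ell_r\big(1-O(\ln^{3/2}D_r/D_r^{1/2})\big)\ge k_r$ indices $i\in J_r$ with $|M^{(r)}\cap M_i|\ge m_r/(1+\delta)\ge(1+\eta)a^{(r)}>a_i$. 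Assigning the $k_r$ demands of $I_r$ to $k_r$ of those indices and taking $M:=\bigcup_r M^{(r)}$ (a matching of $K_{2n}$ because the $V_r$ are disjoint) completes the construction.

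It then remains to verify the two budgets, which is where hypotheses (a)--(d) enter. For the vertex budget $\sum_r 2n_r\le 2n$: splitting according to whether $k_r$ or $p_r$ attains the maximum, the ``$k_r$‑part'' is at most $(1+\delta)(1+\eta)\sum_i a_i<(1+\delta)(1+\eta)(1-\tfrac\varepsilon2)n<(1-\tfrac\varepsilon4)n$, while the ``$p_r$‑part'' is a geometric sum bounded, using $a^{(r)}\le n^{1-\varepsilon}$ from (b), by $O_\varepsilon\!\big(n^{1-\varepsilon/4}\big)=o(n)$. For the matching budget $\ell=\sum_r\ell_r\le(1+\varepsilon)k$: here $\sum_r\ell_r\le(1+o(1))\big(k+\sum_r p_r\big)$ and $\sum_r p_r$ is a geometric sum dominated by its largest term $\Theta\!\big(n^{(1+c)/2}/\sqrt{a_{\min}}\big)$, so it suffices that $k^{2}a_{\min}\ge C_\varepsilon\, n^{1+c}$. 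If $k\ge n^{1/2+\varepsilon}$ this holds already because $a_{\min}\ge1$ and $c<2\varepsilon$; if $k<n^{1/2+\varepsilon}$ then (d) gives $a_{\min}\ge k^{-1}n^{1-\varepsilon/400}$, whence $k^{2}a_{\min}\ge k\,n^{1-\varepsilon/400}\ge\tfrac13 n^{1+\varepsilon-\varepsilon/400}$ by (c), which beats $n^{1+c}$ since $c=\varepsilon/2<\tfrac{399}{400}\varepsilon$. Condition (a) is used throughout in the form $\sum a_i<(1-\tfrac\varepsilon2)n$.

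I expect the main obstacle to be precisely this simultaneous control of the vertex and matching budgets across all $O_\varepsilon(\log n)$ buckets: a bucket with a large target value $a^{(r)}$ but only few assigned matchings must be padded with extra matchings, and hence extra vertices, to make $G_r$ dense enough for Lemma~\ref{lemma:AKS2}, and the hypotheses (b), (c), (d) are calibrated so that the total padding stays inside the $\varepsilon k$ slack in the number of matchings and the $\varepsilon n$ slack in the number of vertices. A secondary technical point is that the Chernoff estimates feed Lemma~\ref{lemma:AKS2} only when each $D_r$ is polynomially large in $n$, which is what forces the padding floors to have the shape $n^{(1+c)/2}(a^{(r)})^{-1/2}$ rather than mere constants; it is the interplay of this requirement with (c) and (d) that pins down the exponents $n^{1-\varepsilon}$, $n^{\varepsilon}$ and $n^{1-\varepsilon/400}$ in the statement.
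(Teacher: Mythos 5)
Your strategy mirrors the paper's proof of Theorem~\ref{thm:main1:1} very closely: geometric bucketing of the $a_i$'s, per-bucket padding of the matching count so that the AKS density condition can be met, a random vertex partition with Chernoff to verify the hypotheses of Lemma~\ref{lemma:AKS2} on each piece, and then the two budget checks. Even the padding term $p_r=\lceil(n^{1+c}/a^{(r)})^{1/2}\rceil$ has exactly the shape of the paper's $b_j-n_j=\lceil m_j^{-1/2}n^{\lambda+1/2}\rceil$, with a slightly different small exponent, and your two-case verification of $\ell\le(1+\varepsilon)k$ using (c) and (d) is the same calculation as Claim~\ref{claim:l-k}.

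There is, however, one internal inconsistency you should repair. Your vertex budget bounds the ``$k_r$-part'' by $(1+\delta)(1+\eta)\sum_i a_i$, which tacitly assumes $\max(a^{(r)},m^\ast)=a^{(r)}$ for every bucket. But in a bucket with $a^{(r)}<m^\ast$ you have $n_r\approx(1+\delta)(1+\eta)k_r m^\ast$, not $\approx(1+\delta)(1+\eta)k_r a^{(r)}$, and this overshoots the vertex budget if $m^\ast$ is a large constant. Concretely, take $a_i=1$ for all $i$ and $k$ just below $(1-\varepsilon/2)n$; hypotheses (a)--(c) hold and (d) is vacuous since $k\ge n^{1/2+\varepsilon}$, yet $n_1\approx m^\ast(1-\varepsilon/2)n$, so $\sum_r 2n_r>2n$ already for $m^\ast\ge3$ and the labelling probabilities are infeasible. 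The cure is to notice that you do not actually need $m^\ast>1$: the Chernoff deviation of $|M_i\cap\binom{V_r}{2}|$ is $O(\sqrt{D_r m_r\ln n})$, and this is at most $\tfrac12 K m_r\sqrt{D_r\ln D_r}$ as soon as $m_r\ge 8/(cK^2)$, so choosing $K$ with $K^2\ge 8/c$ (depending only on $\varepsilon$) lets you take $m^\ast=1$, restoring the budget. Relatedly, Lemma~\ref{lemma:AKS2} wants an \emph{integer} $m$, and $n_r/\ell_r$ need not be one; you should fix an integer $m_r\ge(1+\delta)(1+\eta)a^{(r)}$ first and then set $n_r$ accordingly. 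The paper sidesteps both points by giving each integer up to $C=\lceil 1/\delta\rceil$ its own singleton bucket with $m_j=j$ exactly, and by invoking $m_j\ge 1$ explicitly in the Chernoff estimate of Claim~\ref{claim:split}. With that repair your argument is essentially the paper's proof.
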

	We now show how to deduce Theorem~\ref{thm:main1} from Theorem~\ref{thm:main1:1}.
	\begin{proof}[Proof of Theorem~\ref{thm:main1}]
		Assume $\varepsilon$ is given, we will specify $n_1$ implicitly. Let $a_1, \ldots, a_k$ be a sequence of integers such that each $a_i\leq n^{1-\varepsilon}$ and $\sum_{i=1}^{k}{a_i}<(1-\varepsilon)n$. We start by noticing that if a sequence is $(\varepsilon_0,n)$-happy then it is also $(\varepsilon,n)$-happy for all $\varepsilon\geq \varepsilon_0$. Consequently it is sufficient to prove Theorem~\ref{thm:main1} for 
		$\varepsilon<\frac{1}{10}.$
		
		We may also assume that $k\geq \frac{1}{3}n^{\varepsilon},$ otherwise $\sum_{i=1}^{k}a_i<\frac{1}{3}n$ and it is not hard to observe that we can choose desired sets $A_{i}$ greedily from any collection of $k$ matchings.
		
		All together, we may assume that $\varepsilon$ and a sequence $\{a_{i}\}_{i=1}^{k}$ satisfy the following conditions
		\begin{equation*}
		a_i\leq n^{1-\varepsilon} \text{ for all } i\in[k], \qquad \varepsilon<\frac{1}{10}, \qquad k\geq \frac{1}{3}n^{\varepsilon}, \qquad \sum_{i=1}^{k}{a_i}<(1-\varepsilon)n.
		\end{equation*}
		
		So, sequence $\{a_{i}\}_{i=1}^{k}$ satisfies assumptions (a),(b),(c) of Theorem~\ref{thm:main1:1}. 
		
		Therefore, if $k\geq n^{1/2+\varepsilon}$, by Theorem~\ref{thm:main1:1}, sequence $\{a_{i}\}_{i=1}^{k}$ is $(\varepsilon,n)$-happy.
		
		For the rest of the proof assume $k<n^{1/2+\varepsilon}$. The sequence $\{a_{i}\}_{i=1}^{k}$ might not satisfy condition (d) of Theorem~\ref{thm:main1:1}, so we will construct a new sequence $\{a_{i}^{'}\}_{i=1}^{k}$ that satisfies assumptions (a)-(d) of Theorem~\ref{thm:main1:1} and such that $a_{i}^{'}\geq a_{i}$ for all $i\in[k]$. Define  $$\delta=\varepsilon/100, \qquad \lambda=\varepsilon/400.$$ Let 
		\begin{equation}\label{ineq:a1}
		a=\left\lceil k^{-1}n^{1-\lambda}\right\rceil.
		\end{equation}
		Now $k\geq \frac{1}{3}n^{\varepsilon}$ by~(\ref{ineq:many}), and we have $k^{-1}n^{1-\lambda}<n^{1-\varepsilon}-1$ for sufficiently large $n$, so 
		\begin{equation}\label{ineq:a1.5}
			a\leq n^{1-\varepsilon}.
		\end{equation}
		
		Consider a new sequence $\{a^\prime_i\}_{i=1}^{k}$, that is obtained from $\{a_i\}_{i=1}^{k}$ by setting for all $i\in[k]$ $$a^{\prime}_{i}=\max\{a_i, a\}.$$
		
		Note that according to this definition, for all $i\in [k]$ we have $$a_i^{\prime}\geq a\stackrel{(\ref{ineq:a1})}{\geq} k^{-1}n^{1-\lambda},$$
		so sequence $\{a^{\prime}_{i}\}_{i=1}^{k}$ satisfies assumption~(d) of Theorem~\ref{thm:main1:1}. The sequence $\{a^{\prime}_{i}\}_{i=1}^{k}$ also satisfies condition~(c) of Theorem~\ref{thm:main1:1}, as we assumed $k\geq \frac{1}{3}n^{\varepsilon}$.
		\begin{claim}\label{claim:ai'}
			Sequence $\{a^{'}_{i}\}_{i=1}^{k}$ satisfies the following properties:
			\begin{itemize}
				\item[(a)] $\sum_{i=1}^{k}a'_{i}<(1-\frac{\varepsilon}{2})n.$
				\item[(b)] $a'_{i}\leq n^{1-\varepsilon} \text{ for all } i\in[t].$
				\item[(c)] If the sequence $\{a^{'}_{i}\}_{i=1}^{k}$ is $(\varepsilon,n)$-happy, then the sequence $\{a_{i}\}_{i=1}^{k}$ is also $(\varepsilon,n)$-happy.
			\end{itemize}
		\end{claim}
		\begin{proof}
			{\bf Part (a).} Since $\sum a_i< (1-\varepsilon)n$ and $k<n^{1/2+\epsilon}$, for large $n$ we have
			$$\sum_{i=1}^{k}a^{\prime}_i\leq \sum_{i=1}^{k}(a+a_i)=ka+\sum_{i=1}^{k}a_i\stackrel{\text{(\ref{ineq:a1})}}{<} n^{1-\lambda}+k+\sum_{i=1}^{k}a_i<\left(1-\frac{\varepsilon}{2}\right)n.$$
			
			{\bf Part (b).} For each $i\in[k]$ we have $a_i\leq n^{1-\varepsilon}$, so 
			$$a'_i=\max\{a_i,a\}\stackrel{(\ref{ineq:a1.5}) }{\leq} n^{1-\varepsilon}.$$
			
			{\bf Part (c).} Since $a_{i}^{^\prime}\geq a_{i}$ for all $i\in[k]$, the Definition~\ref{def:enhappy} ensures that $(\varepsilon,n)$-happiness of $\{a_{i}^{\prime}\}_{i=1}^{k}$ implies $(\varepsilon,n)$-happiness of $\{a_{i}\}_{i=1}^{k}$. 
		\end{proof}
		Finally, by Claim~\ref{claim:ai'}, sequence $\{a_{i}^{\prime}\}_{i=1}^{k}$ satisfies assumptions (a), (b) of Theorem~\ref{thm:main1:1}, so it is $(\varepsilon,n)$-happy.
		Therefore, by Claim~\ref{claim:ai'}(c), sequence $\{a_i\}_{i=1}^{k}$ is also $(\varepsilon,n)$-happy.
		
		This finishes the proof of Theorem~\ref{thm:main1}.
	\end{proof}
	
	For the rest of this section we concentrate on proving Theorem~\ref{thm:main1:1}
	
	{\subsection{Choice of constants} Assume $\varepsilon<\frac{1}{10}$ is given, we will specify $n_1$ implicitly.
	Define auxiliary constants
	$$\delta=\frac{\varepsilon}{100}, \qquad \lambda = \frac{\varepsilon}{400}=\frac{\delta}{4}, \qquad K=\frac{4}{\lambda}.$$	
	Let sequence $\{a_{i}\}_{i=1}^{k}$ satisfy assumptions of Theorem~\ref{thm:main1:1}, that is
	\begin{equation}\label{ineq:many}
	 \sum_{i=1}^{k}{a_i}<\left(1-\frac{\varepsilon}{2}\right)n, \qquad a_i\leq n^{1-\varepsilon} \text{ for all } i\in[k], \qquad  k\geq \frac{1}{3}n^{\varepsilon},
	\end{equation}
	and in case $k< n^{1/2+\varepsilon}$ additionally  
	\begin{equation}\label{ineq:scase}
		a_i\geq k^{-1}n^{1-\lambda} \text{ for all } i\in[k].
	\end{equation}
	For the rest of the proof whenever we use $O$-notation, the implicit constant is dependent on $\varepsilon$ only.

	\subsection{Proof plan of Theorem~\ref{thm:main1:1}.} In order to obtain desired sets $A_i=M\cap M_i$ for $i\in[k]$ we are going to apply Lemma~\ref{lemma:AKS2}. To illustrate this we first consider following simple case when sequence $\{a_i\}_{i=1}^k$ is almost regular, that is such that for all $i\in[k]$ 
	$$a_i\in \left(\frac{n^{1-\varepsilon}}{1+\delta}, n^{1-\varepsilon}\right]$$ and $\sum_{i=1}^{k} a_i<(1-\varepsilon/2)n$. Let $\ell=(1+\varepsilon)k$ and  assume that $X$ is a set of size $2n$, and $\ell$ disjoint perfect matchings $M_1, \ldots, M_\ell$ on $X$ are given. Let $G$ be a graph formed by matchings $M_1, \ldots, M_{\ell}$. Our goal is to find $k$ matchings, say $M_1, \ldots M_k$ and sub-matchings $A_i\subseteq M_i$ such that $|A_i|\geq a_i$ and $\bigcup_{i=1}^{k} A_i$ is a matching. 
	
	We will apply Lemma~\ref{lemma:AKS2} to a subgraph $G'$ of $G$ with a large constant $K$, given $\delta$ and $m=(1+\delta)n^{1-\varepsilon}$. To this end let $G'$ be a random induced subgraph of $G$, where each vertex of $G$ is chosen to be in $G'$ independently with probability $p=\frac{m\ell}{n}$ (note that $p<1$). 
	
	Since $deg_{G}(v)=\ell$ for any $v\in V(G)$ we expect the degree of $x$ in $G'$ to be $p\ell$. Let $M_i'=M_i\cap E(G')$, then we expect $|M_i'|$ to be $p^2n=p\ell m$. Therefore for $D=p\ell$, $\Delta=K\sqrt{D\ln D}$ and $b=\ell$, as an easy consequence of Chernoff bound, we infer that with a positive probability all of the following properties hold for $G'$:
	\begin{itemize}
		\item $deg_{G'}(v)=D\pm \Delta$ for all $v\in V(G')$.
		\item  $|M'_i|=Dm\pm \Delta m/2$ for all $i\in[b]$.
	\end{itemize}
	Now, apply Lemma~\ref{lemma:AKS2} to $G'$ to obtain a collection of $b'=\ell(1-O(\frac{\ln^{3/2}D}{D^{1/2}}))$ vertex disjoint matchings, say  $A_1, \ldots, A_{b'}$ such that $A_i\subseteq M_i$ and  $|A_i|\geq m/(1+\delta)$ for $i\in[b']$. It remains to notice that $\ell\geq k \geq \frac{1}{3}n^{\varepsilon}$, $p\geq \frac{1}{3}$ and so $D=p\ell\geq \frac{1}{9}n^{\varepsilon}$. So for large enough $n$ we have $b'\geq k$. Therefore matchings $A_1,\ldots, A_k$ are such that $|A_i|\geq n^{1-\varepsilon}\geq a_i$ for all $i$ and $\bigcup_{i=1}^{k}A_i$ is a matching.
	
	For a general sequence $\{a_i\}_{i=1}^{k}$ we want to adopt the approach of the previous ``simple'' case when all the $a_i$'s are almost equal. 
	To this end we will group $a_i$'s in $t=O(\ln n)$ sets $I_1,\ldots I_t$, so that inside each set $I_j$ the elements do not differ much (this is done in Step 1 of formal proof). This way we will have for each group similar starting position
	as in the ``simple'' case. Let $n_j$ be the number of elements of sequence $\{a_i\}_{i=1}^{k}$ in $I_j$. For notation convenience let the elements of $\{a_i\}_{i=1}^{k}$ in $I_j$ form a sequence $\{a^{i}_{j}\}_{i=1}^{n_j}$.
	
	Next, we define two sequences $m_1,\ldots, m_t$ (in Step 1) and $b_1,\ldots, b_t$ (in Step 2) that will be used as an input values for $m$ and $b$ in Lemma~\ref{lemma:AKS2}. Some technical but crucial properties of those sequences are: 
	\begin{itemize}
		\item[(a)]  $\lceil m_j/(1+\delta)\rceil\geq \max\{x: x\in I_j\}$ for all $j$.
		\item[(b)] 	$b_j\geq n_j+m_j^{-1/2}n^{\lambda+1/2}$ if $n_j\neq 0$.
	\end{itemize}  
 	Here we also choose $\ell=\sum b_j$ and in Claim~\ref{claim:l-k} we show that $\ell\leq (1+\varepsilon)k$.

Finally in Step 3 of the proof, for $G$ that is a union of $\ell$ matchings on set $X$ we construct subgraphs $G_1, \ldots, G_t$ of $G$ in the following way. We first partition collection of matchings $M_1,\ldots, M_\ell$ into sub-collections $\mathcal{M}_1, \ldots, \mathcal{M}_t$ so that each $|\mathcal{M}_j|=b_j$. We also set, after relabeling, $\mathcal{M}_{j}=\{M_j^{1}, \ldots, M_{j}^{b_j}\}$. Then we partition vertex set $X$ into sets $X_0,\ldots, X_t$ (vertices of $X_0$ are leftover vertices that will not be used). Finally  $G_j$ is a defined to be a graph induced by edges of $\mathcal{M}_j$ on $X_j$. 

In Claim~\ref{claim:split} it is shown that we can find partition $X$ so that every $G_j$ satisfies all conditions necessary to apply Lemma~\ref{lemma:AKS2}. Claim~\ref{claim:split} is proved by considering a random partition of $X$. We then apply Lemma~\ref{lemma:AKS2} to a graph $G_j$ with $m=m_j$, $b=b_j$ and $D=D_j=b_j^2m_j/n$. 
	 	
	An application of Lemma~\ref{lemma:AKS2} to each $G_j$ yields the following: there are at least $b_j'=b_j\left(1-O( \frac{\ln^{3/2}D_j}{D_j^{1/2}})\right)$ matchings, say $M^{1}_j, \ldots, M^{b_j'}_{j}$ and sets $A^i_{j}\subseteq M^i_{j}$ for $i\in [b_j']$ such that  $|A^i_{j}|\geq \frac{m_j}{1+\delta}$ fo all $i\in[b_j']$ and that $\bigcup_{i=1}^{b_j'} A^i_j$ is a matching in $G_j$. 

We will prove, that our choice of $b_j$ (see (b) above) will guarantee for large enough $n$, that $b'_j\geq n_j$ for each $j\in[t]$. As mentioned above there are $b'_j$ sets $A_{j}^{i}$'s. On the other hand, $n_j$ is equal to the number of elements of sequence $\{a_{i}\}_{i=1}^{k}$ in set $I_j$ (also recall that those elements were labeled as  $\{a^{1}_{j}, \ldots, a_{j}^{n_j}\}$). So every element $a^{i}_{j}\in I_j$ with $i\in [n_j]$ can be put in correspondence with the matching $A_{j}^{i}$. 

By Lemma~\ref{lemma:AKS2} we have that $|A_{j}^{i}|\geq m_j/(1+\delta)$. Since $a_{j}^{i}\in I_{j}$, it follows from property (a) that $|A_{j}^{i}|\geq a^{i}_{j}$ for all $j\in[t]$, $i\in [n_j]$. Finally, $M=\bigcup_{j=1}^{t} \bigcup_{i=1}^{n_j} A_{j}^{i}$ is a desired matching since all $A_{j}^i$'s are vertex disjoint.

	\subsection{Formal proof of Theorem~\ref{thm:main1:1}.} Recall that the proof of Theorem~\ref{thm:main1:1} is divided into three steps: construction of sets $I_1, \ldots, I_t$ and sequence $\{m_j\}_{j=1}^{t}$; construction of  sequence $\{b_{j}\}_{j=1}^{t}$; and construction of subgraphs $G_1, \ldots, G_t$ with a subsequent application of Lemma~\ref{lemma:AKS2} to each of $G_j$.
		
	{\bf Step 1:} In this step we will construct sets $I_1,\ldots, I_t$ and a sequence $\{m_1,\ldots, m_t\}$.
	 Let $I$ be the set of integers in $[1, n^{1-\varepsilon}]$, we cover $I$ by sets $I_1, \ldots, I_t$. Recall that $\delta=\varepsilon/100$, $\lambda=\delta/4$ and choose an integer $$C=\left\lceil\frac{1}{\delta}\right\rceil.$$ There will be a special treatment of integers that are smaller than $C$. 
	 \begin{definition}\label{def:Ij}
	 	Define the sets $$I_j=\{j\} \text{ for } j\in [C]$$
	 	and for $j\in \nn$ define 
	 	$$I_{C+j}=\left(C(1+\delta)^{j-1}, C(1+\delta)^{j}\right] \cap \mathbb{N}.$$
	 \end{definition}
 	 Define $t$ to be the smallest integer such that $C(1+\delta)^{t-C}\geq n^{1-\varepsilon}$. 
 	 Notice that the choice of $C$ guarantees that every $I_j$ for $j\in [t]$ is nonempty.  Choice of $t$ guarantees that $I_1, \ldots, I_t$ cover $I=[1,n^{1-\varepsilon}]$ and  that $t=O(\ln n)$. 
		
	\begin{definition}\label{def:mj}
		Define $m_j=j$ if $j\leq C$. If $j>C$ define
		\begin{equation*}
		m_{j}=\min\{x\in \mathbb{N}: x\in I_{j+2}\}.
		\end{equation*} 
	\end{definition} 
	Notice that $\{m_{j}\}_{j=1}^{t}$ is an increasing sequence. We also need the following observation about sequence $\{m_j\}_{j=1}^{t}$.
	\begin{claim}\label{claim:ineq:gammaj}
		 If $j\in[C+1,t]$ and $x\in I_{j}$, then 
		  $$m_j\in [(1+\delta)x, (1+\delta)^3x].$$
		 If $x\in I_j$ with $j\leq C$, then $m_j=x=j$.
	\end{claim}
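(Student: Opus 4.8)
The plan is to unwind the definitions of $I_j$ and $m_j$ and then perform a short direct computation. The case $j \leq C$ is immediate: by Definition~\ref{def:mj} we have $m_j = j$, and by Definition~\ref{def:Ij} the set $I_j = \{j\}$, so if $x \in I_j$ then $x = j = m_j$, which is exactly the second assertion.

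For the main case $j \in [C+1, t]$, first I would record the relevant interval endpoints. Writing $j = C + i$ with $i \geq 1$, Definition~\ref{def:Ij} gives $I_j = \bigl( C(1+\delta)^{i-1}, C(1+\delta)^{i} \bigr] \cap \mathbb{N}$, so any $x \in I_j$ satisfies
\[
C(1+\delta)^{i-1} < x \leq C(1+\delta)^{i}.
\]
Next, by Definition~\ref{def:mj}, $m_j = \min\{ y \in \mathbb{N} : y \in I_{j+2}\}$, and $I_{j+2} = I_{C+i+2} = \bigl( C(1+\delta)^{i+1}, C(1+\delta)^{i+2}\bigr] \cap \mathbb{N}$. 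Thus $m_j$ is the least integer strictly greater than $C(1+\delta)^{i+1}$, so
\[
C(1+\delta)^{i+1} < m_j \leq C(1+\delta)^{i+1} + 1.
\]

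Now I combine the two displays. For the lower bound on $m_j$ in terms of $x$: from $x \leq C(1+\delta)^i$ we get $(1+\delta)x \leq C(1+\delta)^{i+1} < m_j$, so $m_j > (1+\delta)x$, hence $m_j \geq (1+\delta)x$ as desired (in fact strict). For the upper bound: from $x > C(1+\delta)^{i-1}$ we have $C(1+\delta)^{i+1} < (1+\delta)^2 x$, so
\[
m_j \leq C(1+\delta)^{i+1} + 1 < (1+\delta)^2 x + 1.
\]
It remains to absorb the additive $+1$ into the exponent gap, i.e.\ to check $(1+\delta)^2 x + 1 \leq (1+\delta)^3 x$, equivalently $1 \leq \delta(1+\delta)^2 x$. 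Since $x \in I_j$ with $j \geq C+1 > C \geq 1/\delta$, every element of $I_j$ exceeds $C \geq 1/\delta$, so $\delta x > 1$ and a fortiori $\delta(1+\delta)^2 x > 1$. This gives $m_j \leq (1+\delta)^3 x$, completing the proof.

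The only mildly delicate point — the ``main obstacle'', such as it is — is handling the $+1$ rounding term in the bound for $m_j$; this is what forces the upper bound to be $(1+\delta)^3 x$ rather than $(1+\delta)^2 x$, and it is exactly here that the choice $C = \lceil 1/\delta \rceil$ (guaranteeing $x \geq C \geq 1/\delta$ for $x \in I_j$, $j > C$) is used. Everything else is bookkeeping with geometric sequences.
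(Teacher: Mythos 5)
Your proof is correct, but it takes a more roundabout route than necessary, and the ``main obstacle'' you identify is actually self-inflicted. The paper's own argument simply uses both endpoints of $I_{j+2}$: since $m_j \in I_{j+2}$, one has $C(1+\delta)^{j-C+1} < m_j \leq C(1+\delta)^{j-C+2}$, while $C(1+\delta)^{j-C-1} < x \leq C(1+\delta)^{j-C}$, and these four inequalities combine immediately to give $(1+\delta)x \leq m_j \leq (1+\delta)^3 x$ — the factor $(1+\delta)^3$ is precisely the ratio of the right endpoint of $I_{j+2}$ to the left endpoint of $I_j$. You instead replace the crude upper bound $m_j \leq C(1+\delta)^{j-C+2}$ with the sharper $m_j \leq C(1+\delta)^{j-C+1}+1$ (exploiting that $m_j$ is the smallest integer in $I_{j+2}$), which then forces you to absorb the additive $+1$ using $\delta x > 1$ (from $x > C \geq 1/\delta$). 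That extra step is sound, and it is a nice observation that this is where $C = \lceil 1/\delta\rceil$ enters — but it is not needed: the paper's coarser bound already lands inside $(1+\delta)^3 x$ without any appeal to the size of $x$. (Both proofs do implicitly use $C\delta \geq 1$ to ensure $I_{j+2}$ is nonempty so that $m_j$ is defined, but that is a different use of the hypothesis.) In short: correct, same flavor of interval bookkeeping, but the paper's version is one step shorter because it never tightens the upper bound on $m_j$ in the first place.
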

	\begin{proof}
	 If $x\in I_{j}$ with $j>C$, then
	$$C(1+\delta)^{j-C-1}\leq x\leq C(1+\delta)^{j-C},$$
	$$C(1+\delta)^{j-C+1}\leq m_{j}\leq C(1+\delta)^{j-C+2}.$$
	
	 So if $x\in I_j$ for $j>C$ then
	 $$(1+\delta)x \leq m_j \leq (1+\delta)^3x.$$
	The second statement of the Claim is trivial since $I_j=\{j\}$ and $m_{j}=j$ when $j\leq C$.
	\end{proof}
	
	{\bf Step 2:} The second step of the proof consists of constructing sequences $n_1, \ldots, n_{t}$ and $b_1,\ldots, b_t$. 
	
		\begin{definition}\label{def:njaij}
		Define $n_j$ to be the number of elements of $\{a_i\}_{i=1}^{k}$ in set $I_j$ for all $j\in[t]$. In other words
			$$n_{j}=|\{i\in [k]: a_{i}\in I_{j}\}|.$$
		
		Define a sequence $\{a^{i}_{j}\}_{i=1}^{n_j}$ to consist of all elements of $\{a_i\}_{i=1}^{k}$ that are in $I_j$.
	\end{definition}
	Notice that according to this definition, a sequence $\{a_{i}\}_{i=1}^{k}$ is partitioned into $t$ sequences $\{a^{i}_{j}\}_{i=1}^{n_{j}}$ with $j\in [t]$. Additionally we have 
		\begin{equation}\label{eq:k}
		k=\sum_{j=1}^{t}n_j.
		\end{equation}
	
	Now we define sequence $b_{1}, \ldots, b_{t}$. Recall that $\lambda=\delta/4$.
	
	\begin{definition}\label{def:bj}
		For $j\in [t]$, set  $b_j=n_j+\lceil m_j^{-1/2}n^{\lambda+1/2}\rceil$  if $n_j\neq0$, and set $b_j=0$ if $n_j=0$.
	\end{definition}	
	Finally, we set 
	\begin{equation}\label{eq:ell}
		\ell=\sum_{j=1}^{t}b_j.
	\end{equation}
	The following Claim proves that $\ell$ is at most $(1+\varepsilon)k$.
	\begin{claim}\label{claim:l-k}
		We have $\ell-k \leq \varepsilon k$. 
	\end{claim}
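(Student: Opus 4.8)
The goal is to bound $\ell - k = \sum_{j=1}^t (b_j - n_j) = \sum_{j : n_j \neq 0} \lceil m_j^{-1/2} n^{\lambda + 1/2} \rceil$. The plan is to split the sum over $j$ according to the size of $I_j$, using that the $m_j$ grow geometrically for $j > C$, so the terms $m_j^{-1/2}$ form (roughly) a geometric series and the sum is dominated by its largest term, which corresponds to the smallest $m_j$ with $n_j \neq 0$.

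First I would record the easy bookkeeping: the number of nonzero $n_j$ is at most $t = O(\ln n)$, so each "$+1$" coming from the ceiling contributes at most $O(\ln n)$ in total, which is negligible compared to $\varepsilon k$ since $k \geq \tfrac13 n^\varepsilon$. So it suffices to bound $S := n^{\lambda + 1/2} \sum_{j : n_j \neq 0} m_j^{-1/2}$. Here I would use Claim~\ref{claim:ineq:gammaj}: for $j > C$, if $a_i \in I_j$ for some $i$ (which is what $n_j \neq 0$ means), then $m_j \geq (1+\delta) a_i \geq (1+\delta)\cdot C(1+\delta)^{j-C-1} = C(1+\delta)^{j-C}$, while for $j \leq C$ we have $m_j = j \geq 1$. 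Thus the nonzero terms $m_j^{-1/2}$ are bounded by a constant (at most $C$ of them, each $\leq 1$) plus a geometric series $\sum_{j > C} \big(C(1+\delta)^{j-C}\big)^{-1/2}$, whose sum is $O(1/\sqrt{\delta}) = O(1)$ with the implicit constant depending only on $\varepsilon$. This gives $S = O(n^{\lambda + 1/2})$.

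It remains to compare $O(n^{\lambda+1/2})$ against $\varepsilon k$. This is where the two cases in the hypotheses of Theorem~\ref{thm:main1:1} get used, and it is the only genuinely delicate point. If $k \geq n^{1/2 + \varepsilon}$, then $\varepsilon k \geq \varepsilon n^{1/2+\varepsilon} \gg n^{1/2 + \lambda}$ since $\lambda = \varepsilon/400 < \varepsilon$, so we are done for large $n$. If $k < n^{1/2+\varepsilon}$, I would instead exploit the extra hypothesis~(\ref{ineq:scase}), namely $a_i \geq k^{-1} n^{1 - \lambda}$ for all $i$. This forces every nonzero $n_j$ to sit in an $I_j$ with $\min I_j \geq k^{-1} n^{1-\lambda}$, hence (by Claim~\ref{claim:ineq:gammaj}) $m_j \geq (1+\delta) k^{-1} n^{1-\lambda}$ for every such $j$, so each term satisfies $m_j^{-1/2} n^{\lambda+1/2} \leq k^{1/2} n^{3\lambda/2}$; summing over the $O(\ln n)$ nonzero indices gives $\ell - k = O\big(k^{1/2} n^{3\lambda/2} \ln n\big)$, and since $k^{1/2} \leq k / (\tfrac13 n^\varepsilon)^{1/2}$ and $3\lambda/2 = 3\varepsilon/800 < \varepsilon/2$, this is $o(k)$, in particular $\leq \varepsilon k$ for large $n$.

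The main obstacle is handling the small-$k$ regime: a naive geometric-series bound only yields $\ell - k = O(n^{1/2+\lambda})$, which is not automatically $\leq \varepsilon k$ when $k$ is as small as $n^\varepsilon$ — one must feed in hypothesis~(\ref{ineq:scase}) to get a term-by-term bound scaling with $k^{1/2}$ rather than with $n^{1/2}$. Everything else is routine: geometric summation, absorbing the $O(\ln n)$ ceilings, and choosing $n_1$ large enough in terms of $\varepsilon$.
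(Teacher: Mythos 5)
Your proposal is correct and takes essentially the same route as the paper: split into the cases $k\geq n^{1/2+\varepsilon}$ and $k<n^{1/2+\varepsilon}$, absorb the $O(\ln n)$ ceiling contributions, and in the small-$k$ case feed hypothesis~(\ref{ineq:scase}) through Claim~\ref{claim:ineq:gammaj} to get a per-term bound of $k^{1/2}n^{3\lambda/2}$. The only cosmetic differences are that you use the geometric growth of $m_j$ to sharpen the Case~1 sum to $O(n^{\lambda+1/2})$ where the paper settles for $O(n^{\lambda+1/2}\ln n)$, and in Case~2 you bound each nonzero term directly rather than introducing the threshold index $j_0$ as the paper does; neither changes the structure of the argument.
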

	\begin{proof}
		First, $\ell-k$ is equal to $\sum_{j=1}^{t}(b_j-n_j)$ by~(\ref{eq:k}) and~(\ref{eq:ell}). For all $j\in [t]$, according to Definition~\ref{def:bj},
		$$b_j-n_j \leq   m_j^{-1/2}n^{\lambda+1/2}+1.$$ 
		
		{\bf Case 1.} Consider the case $k\geq n^{1/2 +\varepsilon}$ first. Since each $m_j$ is a positive integer,
		$$\ell-k=\sum_{j=1}^{t}{b_j-n_j} \leq \sum_{j=1}^{t}1+ n^{\lambda+1/2}m_j^{-1/2} \leq t+t n^{\lambda+1/2}.$$
		Recall that $t=O(\ln n)$ and $\lambda<\varepsilon$, so for large enough $n$,
		$$\ell-k=O(\ln n)n^{\lambda+1/2}\leq \varepsilon n^{1/2+\varepsilon}\leq \varepsilon k.$$
		This finishes the proof of the Claim for case $k\geq n^{1/2+\varepsilon}$.
			
		{\bf Case 2.} Now we consider the case $k< n^{1/2 + \varepsilon}$. In this case, $k^{-1}n^{1-\lambda}\geq n^{1/2-\varepsilon-\lambda}$ and sequence $\{a_{i}\}_{i=1}^{k}$ satisfies property~(\ref{ineq:scase}), so for all $i\in[k]$ we have
		\begin{equation}\label{ineq:knmu}
		a_i\geq n^{1/2-\varepsilon-\lambda}.
		\end{equation}
		Next we will define an integer $j_0$ and show that $n_j=0$ for all $j<j_0$. 
		
		To this end consider the smallest $j_0\in[t]$, such that 
		\begin{equation}\label{ineq:gammaj0:1}
		m_{j_0}\geq (1+\delta)k^{-1}n^{1-\lambda}.
		\end{equation}
		In order to observe that such $j_0$ exists we notice that $k\geq \frac{1}{3}n^{\varepsilon}$ (by (\ref{ineq:many})) and so by Definition~\ref{def:mj} we have for sufficiently large $n$ $$m_t\geq n^{1-\varepsilon}\geq (1+\delta)k^{-1}n^{1-\lambda}.$$ Therefore there is $j_0\leq t$ that satisfies (\ref{ineq:gammaj0:1}). 
		
		For $i\in [k]$, by~(\ref{ineq:knmu}) we have $a_{i}\geq n^{1/2-\varepsilon-\lambda}$, so for large enough $n$, 
		if $a_{i}\in I_j$, then $j>C$ (see Definition~\ref{def:Ij}).
		
		Therefore if $a_{i}\in I_{j}$, by applying Claim~\ref{claim:ineq:gammaj} with $x=a_{i}$, we get 
		$$m_{j}\geq (1+\delta)a_i\stackrel{\text{(\ref{ineq:scase})}}{\geq} (1+\delta)k^{-1}n^{1-\lambda}.$$
		
		Comparing this inequality with definition of $j_0$~(\ref{ineq:gammaj0:1}) we get that if $a_{i}\in I_j$ for some $i\in[k]$, then $j\geq j_0$. 
		
		Therefore for all $j<j_0$ there are no elements of $\{a_{i}\}_{i=1}^{k}$ in sets $I_j$, which implies that $n_j=0$ and therefore $b_{j}=0$ (by Definition~\ref{def:bj}) for all $j<j_0$. 
		
		Finally, since $\{m_{j}\}_{j=1}^{t}$ is increasing,  provided  $n$ is large enough
		$$\ell-k=\sum_{j=1}^{t}b_j-n_j=\sum_{j=j_0}^{t}b_j-n_j \stackrel{\text{Def }\ref{def:bj}}{\leq} \sum_{j=j_0}^{t}1+ n^{\lambda+1/2}m_j^{-1/2} \leq t+t n^{\lambda+1/2} m_{j_0}^{-1/2}.$$
		Using $t=O(\ln n)$ and inequality~(\ref{ineq:gammaj0:1}), for large enough $n$
		$$\ell-k=O(\ln n)n^{\lambda+1/2}(k^{1/2}n^{\lambda/2-1/2})=O(\ln n)k^{1/2}n^{3\lambda/2}\leq k^{1/2}n^{3\varepsilon/8}.$$ 
		Now, recall that by~(\ref{ineq:many}) we have $k\geq \frac{1}{3}n^{\varepsilon}$, so
		$$\ell-k\leq 3k^{1/2}k^{3/8}<\varepsilon k$$
		for large enough $n$. So $\ell-k\leq \varepsilon k$ in both cases.

		\end{proof}
	
		The next Claim will be used in Step 3 of the proof.
		\begin{claim}\label{claim:ineq:p_j}
			For large enough $n$, $$\sum_{j=1}^{t}b_jm_j\leq \left(1-\frac{\varepsilon}{4}\right)n.$$
		\end{claim}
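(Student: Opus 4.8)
The plan is to bound the sum termwise. For each $j$ with $n_j\neq 0$, write $b_j m_j = n_j m_j + m_j\lceil m_j^{-1/2}n^{\lambda+1/2}\rceil$, and treat the ``main'' part $\sum_{j} n_j m_j$ and the ``error'' part $\sum_{j} m_j\lceil m_j^{-1/2}n^{\lambda+1/2}\rceil$ separately (for $n_j=0$ the term $b_jm_j$ is $0$ and contributes nothing).

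For the main part, I would first observe that the sets $I_1,\dots,I_t$ are pairwise disjoint and cover all integers in $[1,n^{1-\varepsilon}]$, so each $a_i$ lies in exactly one $I_j$ and $\sum_{j}\sum_{i:\,a_i\in I_j}a_i=\sum_{i=1}^{k}a_i$. By Claim~\ref{claim:ineq:gammaj}, whenever $a_i\in I_j$ we have $m_j\le(1+\delta)^3 a_i$ (the case $j\le C$, where $m_j=a_i$, being even more favourable). Summing over the $n_j$ indices $i$ with $a_i\in I_j$ gives $n_j m_j\le(1+\delta)^3\sum_{i:\,a_i\in I_j}a_i$; summing over $j$ and using hypothesis~(\ref{ineq:many}) gives $\sum_{j=1}^{t}n_j m_j\le(1+\delta)^3\sum_{i=1}^{k}a_i<(1+\delta)^3(1-\tfrac{\varepsilon}{2})n$. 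Since $\delta=\varepsilon/100$ and $\varepsilon<\tfrac{1}{10}$, a short computation gives $(1+\delta)^3(1-\tfrac{\varepsilon}{2})<1-\tfrac{2\varepsilon}{5}$, so the main part is at most $(1-\tfrac{2\varepsilon}{5})n$.

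For the error part, I would use $\lceil m_j^{-1/2}n^{\lambda+1/2}\rceil\le m_j^{-1/2}n^{\lambda+1/2}+1$, hence $m_j\lceil m_j^{-1/2}n^{\lambda+1/2}\rceil\le m_j^{1/2}n^{\lambda+1/2}+m_j$. There are at most $t=O(\ln n)$ nonzero terms, and $m_j\le m_t$ for all $j$; from the minimality of $t$ in Step 1 one gets $C(1+\delta)^{t-C}<(1+\delta)n^{1-\varepsilon}$, so $m_t\le C(1+\delta)^{t-C+1}+1<2n^{1-\varepsilon}$ for large $n$. Substituting, the error part is $O(\ln n\cdot n^{1-\varepsilon/2+\lambda})+O(\ln n\cdot n^{1-\varepsilon})$; since $\lambda=\varepsilon/400<\varepsilon/2$, both exponents are strictly below $1$, so the error part is $o(n)$. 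Combining the two estimates, $\sum_{j=1}^{t}b_j m_j<(1-\tfrac{2\varepsilon}{5})n+o(n)<(1-\tfrac{\varepsilon}{4})n$ for all sufficiently large $n$, as required.

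The one delicate point is the bookkeeping of constants. Grouping the $a_i$ geometrically into the $I_j$ inflates the cost of each $a_i$ to $m_j$ by the factor $(1+\delta)^3$, and this inflation must be absorbed by the gap between the hypothesis $\sum a_i<(1-\varepsilon/2)n$ and the target $1-\varepsilon/4$; this is exactly why $\delta$ was chosen as small as $\varepsilon/100$. Similarly, the excess $b_j-n_j$ built into Definition~\ref{def:bj} must contribute only a lower-order amount once weighted by $m_j$ and summed, which is guaranteed precisely because $\lambda<\varepsilon/2$ and $t=O(\ln n)$.
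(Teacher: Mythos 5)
Your proof is correct and follows essentially the same route as the paper: split $\sum b_j m_j$ into the ``main'' part $\sum n_j m_j$ and the ``error'' part coming from $b_j-n_j$, bound the main part by $(1+\delta)^3\sum a_i$ via Claim~\ref{claim:ineq:gammaj}, and show the error part is $o(n)$ using $t=O(\ln n)$ and $m_j\leq m_t=O(n^{1-\varepsilon})$. The only cosmetic differences are that the paper bounds $m_t$ via Claim~\ref{claim:ineq:gammaj} (with $n^{1-\varepsilon}\in I_t$) rather than from the minimality of $t$, and that the paper uses $(1-\tfrac{3\varepsilon}{8})n+\tfrac{\varepsilon}{8}n$ instead of your $(1-\tfrac{2\varepsilon}{5})n+o(n)$; both give $(1-\tfrac{\varepsilon}{4})n$.
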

		\begin{proof}
	    According to Definition~\ref{def:bj} 
		$$\sum_{i=1}^{t}b_jm_j \leq \sum_{i=1}^{t}n_jm_j+\sum_{j=1}^{t}\left(1+ n^{\lambda+1/2}m_j^{-1/2}\right)m_{j}.$$
		Let $S_1=\sum_{i=1}^{t}n_jm_j$ and $S_2=\sum_{j=1}^{t}\left(1+ n^{\lambda+1/2}m_j^{-1/2}\right)m_{j}$.
		
		We now estimate $S_1$. Claim~\ref{claim:ineq:gammaj} implies that if for some $i\in [k]$ we have $a_i\in I_j$, then $m_{j}\leq (1+\delta)^{3}a_i$. Since each $I_j$ contains exactly $n_j$ elements of sequence $\{a_{i}\}_{i=1}^{k}$ (by Definition~\ref{def:njaij}), we infer
		$$n_jm_j=\sum_{i\in[t], \; a_{i}\in I_j}m_j\leq \sum_{i\in[t], \; a_{i}\in I_j}(1+\delta)^{3}a_i.$$
		Since every member of $\{a_i\}_{i=1}^{k}$ is in some $I_j$, we have
		$$S_1=\sum_{j=1}^{t}n_jm_j\leq \sum_{i=1}^{k}(1+\delta)^3a_i\stackrel{(\ref{ineq:many})}{\leq} (1+\delta)^{3}\left(1-\frac{\varepsilon}{2}\right)n\leq \left(1-\frac{3\varepsilon}{8}\right)n.$$
		Now we estimate $S_2$. Recall that sequences $\{m_{j}\}_{j=1}^{t}$ is increasing. We also have $n^{1-\varepsilon}\in I_{t}$, so Claim~\ref{claim:ineq:gammaj} implies that $m_{t}\leq (1+\delta)^3n^{1-\varepsilon}$. So for all $j\in[t]$, we have $m_j\leq (1+\delta)^3n^{1-\varepsilon}$. Therefore	
		$$S_2=\sum_{j=1}^{t}\left(1+ n^{\lambda+1/2}m_j^{-1/2}\right)m_{j}= \sum_{j=1}^{t}m_{j}+\sum_{j=1}^{t} n^{\lambda+1/2}m_j^{1/2}=O(tn^{1-\varepsilon})+O(tn^{\lambda+1/2}n^{1/2-\varepsilon/2}).$$
		Recall that $t=O(\ln n)$, so for large enough $n$ we have 
		$$S_2=o(n)\leq \frac{\varepsilon}{8}n.$$
		Finally, combining the upperbounds on both sums we get	
		$$\sum_{j=1}^{t}b_jm_j\leq S_1+S_2\leq  \left(1-\frac{3\varepsilon}{8}\right)n+\frac{\varepsilon}{8}n=\left(1-\frac{\varepsilon}{4}\right)n.$$
		\end{proof}
	{\bf Step 3.} 	
	Now, let $M_1, \ldots, M_\ell$ be disjoint perfect matchings on $K_{2n}$. Assume that $G$ is a graph  formed by $\ell$ disjoint perfect matchings $M_1, \ldots, M_\ell$ on vertex set $X$ of size $2n$.
	\begin{definition}
		For a collection of matchings $\mathcal{M}\subseteq\{M_1,\ldots, M_b\}$ and a set $X'\subseteq X$ define $G[X',\mathcal{M}]$ to be the subgraph of $G$ formed by the edges $e=xy$, where $x,y\in X'$ and $xy\in M$ for some $M \in \mathcal{M}$.
	\end{definition}
	
	Let $\mathcal{M}_1,\ldots, \mathcal{M}_t$ be a partition of a set $\{M_1,\ldots, M_k\}$ such that $|\mathcal{M}_j|=b_j$ for all $j\in [t]$. Recall that elements of $\mathcal{M}_{j}$ were also labeled by $M_{j}^{i}$, so that $\mathcal{M}_{j}=\{M^1_{j}, \ldots, M^{b_j}_{j}\}$ for all $j\in[t]$.

	The following Claim allows us to construct vertex disjoint subgraphs $G_1,\ldots, G_t$ of $G$, to which we will subsequently apply Lemma~\ref{lemma:AKS2}.
	
	To this end, for $j\in[t]$ let $$D_j=b_j^2m_j/n \;\;\;\; \text{and} \;\;\;\; \Delta_j=K\sqrt{D_j\ln D_j}.$$ 
	\begin{claim}\label{claim:split}
	There is partition of $X$ into parts $X_0, X_1,\ldots,X_t$, such that for graphs $G_1,\ldots, G_t$, defined by $G_j=G[X_j,\mathcal{M}_j]$, the following holds for all $j\in[t]$
		\begin{itemize}
			\item $deg_{G_{j}}(v)=D_j\pm \Delta_j$ for all $v\in V(G_j)$.
			\item $|M_{j}^{i}\cap E(G_j)|=D_jm_j\pm \frac{\Delta_jm_j}{2}$ for all $i\in[b_j]$.
		\end{itemize}
	\end{claim}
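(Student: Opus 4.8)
The plan is to prove Claim~\ref{claim:split} via the probabilistic method, constructing the partition $X = X_0 \cup X_1 \cup \dots \cup X_t$ at random and showing that the desired degree/edge-count concentration holds with positive probability. Concretely, I would assign each vertex $v \in X$ independently to part $X_j$ with probability $p_j$ for $j \in [t]$, and to $X_0$ with the remaining probability, where the natural choice is $p_j = b_j m_j / n$ (so that the total mass $\sum_{j} p_j = \frac{1}{n}\sum_j b_j m_j \le 1 - \varepsilon/4 < 1$ by Claim~\ref{claim:ineq:p_j}, leaving room for the ``leftover'' part $X_0$). This choice is calibrated so the expectations come out right: for $v \in X_j$ and a fixed matching $M_j^i \in \mathcal{M}_j$, the vertex $v$ has exactly one $M_j^i$-neighbor $w$ in $G$, and $w \in X_j$ with probability $p_j$; summing over the $b_j$ matchings in $\mathcal{M}_j$ gives $\mathbb{E}[\deg_{G_j}(v) \mid v \in X_j] = b_j p_j = b_j^2 m_j / n = D_j$. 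Similarly, each edge of $M_j^i$ survives into $G_j$ iff both endpoints land in $X_j$, which happens with probability $p_j^2$, so $\mathbb{E}|M_j^i \cap E(G_j)| = n p_j^2 = D_j m_j$ (using $|M_j^i| = n$ for a perfect matching on $2n$ vertices).

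Next I would apply concentration. For the degree condition, fix $v$ and condition on $v \in X_j$; then $\deg_{G_j}(v)$ is a sum of $b_j$ independent indicator variables (one per matching, indicating whether the unique $M_j^i$-partner of $v$ falls in $X_j$), hence a binomial-type variable with mean $D_j$, and Theorem~\ref{thm:Chernoff} gives $\mathbb{P}(|\deg_{G_j}(v) - D_j| > \Delta_j) \le 2 e^{-\Delta_j^2/(4 D_j)} = 2 e^{-(K^2/4)\ln D_j} = 2 D_j^{-K^2/4}$. For the edge-count condition, $|M_j^i \cap E(G_j)|$ is a sum of $n$ independent indicators (one per edge of $M_j^i$, each surviving with probability $p_j^2$), with mean $D_j m_j$; Chernoff gives a failure probability at most $2 e^{-(\Delta_j m_j / 2)^2 / (4 D_j m_j)} = 2 e^{-(K^2 m_j/16)\ln D_j} \le 2 D_j^{-K^2/16}$. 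Since $K = 4/\lambda$ is large and, crucially, $D_j = b_j^2 m_j / n \to \infty$ (one needs a lower bound $D_j \ge n^{\Omega(\varepsilon)}$, which follows from $b_j \ge \lceil m_j^{-1/2} n^{\lambda + 1/2}\rceil$, giving $b_j^2 m_j / n \ge n^{2\lambda}$, together with $k \ge \frac{1}{3}n^\varepsilon$ controlling the regime $k \ge n^{1/2+\varepsilon}$ — this is the point where the hypothesis that $D_j$ is polynomially large in $n$ enters), each of these probabilities is super-polynomially small, say $\le n^{-10}$. There are at most $2n$ vertices and $\sum_j b_j = \ell \le 2k \le 2n$ matchings, so a union bound over all $O(n)$ bad events still leaves positive probability that none occurs; hence a good partition exists.

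The one subtlety worth being careful about is that Theorem~\ref{thm:Chernoff} requires $t \le 3\mu$, i.e.\ the deviation must not exceed three times the mean; here the deviation is $\Delta_j = K\sqrt{D_j \ln D_j}$ against mean $D_j$ (for degrees) and $\Delta_j m_j / 2$ against mean $D_j m_j$ (for edges), and in both cases the ratio is $O(\sqrt{\ln D_j / D_j}) = o(1)$, so the hypothesis is comfortably satisfied once $D_0$ (hence $n$) is large enough. I would also remark that we only need the two listed properties for $j \in [t]$ with $b_j \neq 0$; when $b_j = 0$ the graph $G_j$ is empty and there is nothing to check. The main obstacle — and really the only nontrivial part — is bookkeeping: verifying that $D_j$ is polynomially large in $n$ uniformly over $j$ (so that the $O(\ln^{3/2} D_j / D_j^{1/2})$ error terms produced by the subsequent application of Lemma~\ref{lemma:AKS2} are genuinely small and the union bound goes through), which is exactly what the definition of $b_j$ in Definition~\ref{def:bj} and the case analysis in Claim~\ref{claim:l-k} were engineered to guarantee.
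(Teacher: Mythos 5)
Your proposal is correct and follows essentially the same line as the paper: define $p_j = m_j b_j/n$, assign each vertex to $X_j$ independently with probability $p_j$ (with $X_0$ absorbing the rest, which is possible since $\sum p_j \le 1 - \varepsilon/4$ by Claim~\ref{claim:ineq:p_j}), observe $\deg_{G_j}(v) \sim Bi(b_j,p_j)$ and $|M_j^i \cap E(G_j)| \sim Bi(n,p_j^2)$, and invoke Chernoff together with the lower bound $D_j \ge n^{2\lambda}$ (which comes from Definition~\ref{def:bj}) and a union bound. The only addition you make that the paper leaves implicit is the sanity check that the deviation $\Delta_j$ is within the $t \le 3\mu$ range required by Theorem~\ref{thm:Chernoff}, which is indeed satisfied since $\Delta_j/D_j = o(1)$.
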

	\begin{proof}
		
		Define $p_j=m_jb_j/n$ for $j\in [t]$. Then $D_j=p_jb_j$ for all $j\in [t]$. Note that Claim~\ref{claim:ineq:p_j} implies that $\sum_{j=1}^{t}p_j\leq 1-\varepsilon/4$, so each $p_j\in[0,1)$.
		Set $p_0=1-\sum_{j=1}^{t}p_j$. Consider random partition of $X$  into parts $X_0, X_1,\ldots,X_t$ such that each vertex $x\in X$ ends up in set $X_j$ with probability $p_j$ independently of other vertices.
		
		For all $j\in[t]$ and $v\in V(G_j)$, degree of $v$ in $G[X,\mathcal{M}_j]$ is equal to $|\mathcal{M}_j|=b_j$. So $deg_{G_{j}}(v)\sim Bi(b_j, p_j)$ for all $j\in[t]$ and $v\in V(G_j)$. 
		
		Now, for any edge, probability that both of its end point belong to $V(G_j)$ is equal to $p_j^2$. So $|M\cap E(G_j)|\sim Bi(n, p_j^2)$ for all $j\in [t]$ and $M\in \mathcal{M}_j$. 
		
		Notice that if $b_j=0$ (in this case $n_j=0$), then $p_j=0$,  $|X_j|=0$, so in this case $G_j$ is an empty graph and it satisfies both conclusions of the Claim. From now on we may assume that all graphs $G_j$ are such that $b_j\neq 0$.
				
		Calculating the expectation of binomial random variable gives for all $j\in[t],$ $v\in V(G_j)$ and $i\in[b_j]$:
		\begin{itemize}
			\item $\mathbb{E}(deg_{G_{j}}(v))=b_jp_j=b_j^2m_j/n=D_j$
			\item $\mathbb{E}(M^{i}_j\cap E(G_j))=np_j^2=m_j(b_jp_j)=m_jD_j.$
		\end{itemize}
	
		Recall that if $b_j\neq 0$, then $b_j\geq m_j^{-1/2}n^{\lambda+1/2}$ by Definition~\ref{def:bj}, so \begin{equation}\label{ineq:Dj}
			D_j=\frac{b_j^2m_j}{n}\geq n^{2\lambda}.
		\end{equation}
		So for any $j\in[t]$ and $v\in V(G_j)$ applying Chernoff bound to $deg_{G_{j}}(v)$ gives that for large enough $n$
		$$\mathbb{P}(|deg_{G_{j}}(x)-D_j|> \Delta_j)\leq 2e^{-K^2\ln D_j/4}\stackrel{(\ref{ineq:Dj})}{\leq} 2n^{-\frac{K^2\lambda}{2}}.$$
		Similarly, for all $j\in [t]$ and $i\in [b_j]$
		$$\mathbb{P}(||M_j^{i}\cap E(G_j)|-m_jD_j|>\Delta_jm_j/2)\leq 2e^{-K^2m_j\ln D_j/16}.$$
		Recall that $m_j\geq 1$ for all $j\in [t]$, so
		$$\mathbb{P}(||M_j^{i}\cap E(G_j)|-m_jD_j|>\Delta_jm_j/2)\leq 2e^{-K^2\ln D_j/16}\stackrel{(\ref{ineq:Dj})}{\leq} 2n^{-\frac{K^2\lambda}{8}}.$$
		Finally, let $p$ be the probability that a random partition satisfies the conclusion of the Claim for all graphs $G_j$ with $j\in[t]$. There are at most $2tn$ random variables $deg_{G_j}(v)$ and at most $tn$ random variables $|M_j^{i}\cap E(G_j)|$. So by union bound
		$$p\geq 1-t(2n)2n^{-\frac{K^2\lambda}{2}}-t(n)2n^{-\frac{K^2\lambda}{8}}.$$
		Recall that $K=4/\lambda$ and $t=O(\ln n)$, so for $n$ large enough we have $p= 1-o(1)$.
		
		Therefore the random partion above satisfies the conclusion of Claim~\ref{claim:split} with high probability.
	\end{proof}
	Let $X$ be partitioned as in Claim~\ref{claim:split}, then the corresponding graphs $G_j$ with $j\in [t]$ satisfy the assumptions of the Lemma~\ref{lemma:AKS2}.
		
	So, for each $j\in [t]$ we can apply Lemma~\ref{lemma:AKS2} with $G=G_j$, $D=D_j$, $b=b_j$, matchings $\mathcal{M}_j\cap E(G_j)$ and $m=m_j$. We obtain at least $b_j^{'}=b_j\left(1-O( \frac{\ln^{3/2}D_{j}}{D_{j}^{1/2}})\right)$ matchings, say $M^1_j, \ldots, M_{j}^{b_j^{'}}$ and sets $\{A^{i}_j\}_{i=1}^{b^{'}_{j}}$ such that 
	\begin{itemize}
		\item[(L1)] $A_{j}^{i}\subseteq M_{j}^{i}\bigcap E(G_j)$ for all $i\in[b^{'}_j]$,\vspace{5pt}
		\item[(L2)] $|A^{i}_{j}|\geq \frac{m_j}{1+\delta}$ for $i\in[b^{'}_{j}]$,\vspace{5pt}
		\item[(L3)] $\bigcup_{i=1}^{b_j^\prime}A^{i}_j$ is a matching.
	\end{itemize} 
	
	Next, we show the for each $j\in[t]$ there are more matchings $A_{j}^{i}$ than there are elements of $\{a_i\}_{i=1}^{k}$ in $I_j$.
	\begin{claim}\label{claim:b'jnj}
		For all $j\in[t]$ and large enough $n$, we have $b^\prime_j\geq n_j$.
	\end{claim}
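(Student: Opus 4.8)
The plan is to verify that the loss incurred when applying Lemma~\ref{lemma:AKS2} to $G_j$, namely $b_j-b_j'$, is smaller than the surplus $b_j-n_j=\lceil m_j^{-1/2}n^{\lambda+1/2}\rceil$ that was deliberately built into Definition~\ref{def:bj}. First I would dispose of the trivial case $n_j=0$: then $b_j=0$ by Definition~\ref{def:bj}, hence $b_j'=0=n_j$. So from now on assume $n_j\neq0$, which by Definition~\ref{def:bj} gives $b_j\geq m_j^{-1/2}n^{\lambda+1/2}$ and, via~(\ref{ineq:Dj}), $D_j\geq n^{2\lambda}$, so that Lemma~\ref{lemma:AKS2} indeed applies for $n$ large.

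The key computation is to rewrite the error term supplied by Lemma~\ref{lemma:AKS2}. That lemma gives $b_j-b_j'=O\bigl(b_j\ln^{3/2}(D_j)/D_j^{1/2}\bigr)$, the implicit constant depending only on $\varepsilon$ (through $\delta$ and $K$). Since $D_j=b_j^2m_j/n$ we have $D_j^{1/2}=b_j(m_j/n)^{1/2}$, so the factors of $b_j$ cancel and
$$b_j\frac{\ln^{3/2}(D_j)}{D_j^{1/2}}=\ln^{3/2}(D_j)\,m_j^{-1/2}n^{1/2}.$$
It then remains to absorb the factor $\ln^{3/2}(D_j)$ into $n^{\lambda}$. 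For this I would bound $D_j$ crudely: $b_j\leq\ell<2n$ (by Claim~\ref{claim:l-k} together with $k<n$, which follows from~(\ref{ineq:many}) and $a_i\geq1$) and $m_j\leq(1+\delta)^3n^{1-\varepsilon}<n$ (as observed in the proof of Claim~\ref{claim:ineq:p_j}), whence $D_j=b_j^2m_j/n<4n^2$ and $\ln^{3/2}(D_j)=O(\ln^{3/2}n)=o(n^\lambda)$ because $\lambda>0$ is a fixed constant. Combining the two displays, $b_j-b_j'=o(m_j^{-1/2}n^{\lambda+1/2})<\lceil m_j^{-1/2}n^{\lambda+1/2}\rceil=b_j-n_j$ for all large $n$, i.e. $b_j'\geq n_j$.

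I do not expect a genuine obstacle here; the only point requiring a little care is that the constant implicit in the $O$-estimate from Lemma~\ref{lemma:AKS2} is fixed once $\varepsilon$ is fixed, so the bound $o(n^\lambda)$ is uniform over $j\in[t]$ and the final inequality holds for all sufficiently large $n$ simultaneously. The conceptual content is simply the cancellation above: substituting $D_j=b_j^2m_j/n$ into $b_j\ln^{3/2}(D_j)/D_j^{1/2}$ turns it into $m_j^{-1/2}n^{1/2}$ up to the harmless $\ln^{3/2}$ factor, which is precisely why the padding term in Definition~\ref{def:bj} was taken to be $n^{\lambda+1/2}$ rather than $n^{1/2}$.
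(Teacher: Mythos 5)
Your proposal is correct and follows essentially the same route as the paper: both arguments substitute $D_j=b_j^2m_j/n$ to cancel the $b_j$ against $D_j^{1/2}$, bound $D_j$ polynomially in $n$ so that $\ln^{3/2}D_j=O(\ln^{3/2}n)=o(n^\lambda)$, and then conclude that the $O(\cdot)$ loss is dominated by the built-in surplus $\lceil m_j^{-1/2}n^{\lambda+1/2}\rceil$. Your slightly cruder bound $D_j<4n^2$ versus the paper's $D_j\leq n$ (from $b_jm_j\leq n$) makes no difference, and your explicit handling of the $n_j=0$ case and the check that $D_j\geq n^{2\lambda}$ is large enough for Lemma~\ref{lemma:AKS2} are both sound (and indeed slightly more careful than the paper's write-up).
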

	\begin{proof}Recall that,
	$$b^{'}_{j}=b_j\left(1-O\left(\frac{\ln^{3/2}D_j}{D_j^{1/2}}\right)\right)=
				b_j-O\left(\frac{b_j\ln^{3/2}D_j}{D_j^{1/2}}\right).$$
	So by Definition~\ref{def:bj} and since $D_j=b_j^2m_j/n$ we have
	$$b^{'}_{j}\geq n_j+m_j^{-1/2}n^{\lambda+1/2}-O\left(\frac{n^{1/2}\ln^{3/2}D_j}{m_j^{1/2}}\right).$$
	Now, by Claim~\ref{claim:ineq:p_j}, $b_jm_j\leq n$, and since $m_j\geq 1$, we have  $$D_j=b_j^2m_j/n\leq b^{2}_{j}m_{j}^{2}/n\leq n.$$
	So for large enough $n$, we have 
 	$$b^{\prime}_{j}\geq n_j+m_j^{-1/2}n^{\lambda+1/2}-O\left(m_{j}^{-1/2}n^{1/2}\ln^{3/2} n\right) \geq n_j.$$
	\end{proof}
	
	Recall that $n_j$ is equal to the number of elements of sequence $\{a_{i}\}_{i=1}^{k}$ in set $I_j$ and those elements were also labeled as  $\{a^{1}_{j}, \ldots, a_{j}^{n_j}\}$.
	
	We proved that for all $j\in[t]$ there are $b_j^{'}$ sets $\{A_j^{1}, \ldots A_{j}^{b_{j}^{'}}\}$ that satisfy (L1), (L2), (L3). Consequently, by Claim~\ref{claim:b'jnj}, for any $j\in[t]$ and $i\in[n_j]$ set $A_{j}^{i}$ exists. 
	
	\begin{claim}\label{claim:Ajiaji}
		For every $j\in[t]$ and $i\in[n_j]$ we have $|A_{j}^{i}|\geq a_j^{i}$.
	\end{claim}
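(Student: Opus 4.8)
\textbf{Proof proposal for Claim~\ref{claim:Ajiaji}.}

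The plan is to combine the lower bound on $|A_j^i|$ coming from Lemma~\ref{lemma:AKS2} (property (L2)) with the placement of $a_j^i$ inside the block $I_j$, using the comparison between $m_j$ and elements of $I_j$ established in Claim~\ref{claim:ineq:gammaj}. Concretely, I would fix $j\in[t]$ and $i\in[n_j]$ and split into the two regimes that Claim~\ref{claim:ineq:gammaj} distinguishes.

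\emph{Case $j\le C$.} Here Definition~\ref{def:Ij} gives $I_j=\{j\}$, so $a_j^i=j$, and Definition~\ref{def:mj} gives $m_j=j$. Since $\delta>0$, we have $m_j/(1+\delta)=j/(1+\delta)$, which is at most $j$, so (L2) alone is not quite enough — but $|A_j^i|$ is an integer and $m_j/(1+\delta) = j/(1+\delta) > j-1$ because $j\le C=\lceil 1/\delta\rceil$ forces $j\delta \le C\delta \le 1+\delta$, hence $j/(1+\delta)\ge j-1$ with room to spare; actually the clean way is to note $a_j^i = j = m_j$, and (L2) gives $|A_j^i|\ge m_j/(1+\delta)$, which is an integer lower bound only after taking a ceiling. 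The intended reading of property (a) in the proof plan is $\lceil m_j/(1+\delta)\rceil \ge \max I_j$; since $|A_j^i|$ is an integer and $|A_j^i|\ge m_j/(1+\delta)$, we get $|A_j^i|\ge \lceil m_j/(1+\delta)\rceil \ge a_j^i$. So for $j\le C$ it remains only to check $\lceil j/(1+\delta)\rceil \ge j$, i.e. $j/(1+\delta) > j-1$, i.e. $j\delta < 1+\delta$, which holds since $j\le \lceil 1/\delta\rceil$.

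\emph{Case $j\ge C+1$.} Now $a_j^i\in I_j$ with $j>C$, so Claim~\ref{claim:ineq:gammaj} applied with $x=a_j^i$ gives $m_j\ge (1+\delta)a_j^i$, i.e. $m_j/(1+\delta)\ge a_j^i$. Combining with (L2),
\[
|A_j^i|\ \ge\ \frac{m_j}{1+\delta}\ \ge\ a_j^i,
\]
and since $a_j^i$ is a positive integer this is exactly the desired inequality.

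\emph{Main obstacle.} The only delicate point is the integrality issue in the small-index case $j\le C$: the bare inequality $|A_j^i|\ge m_j/(1+\delta)$ from Lemma~\ref{lemma:AKS2} is a real-number bound, and one must invoke that $|A_j^i|$ is an integer (and that $\lceil m_j/(1+\delta)\rceil\ge m_j = a_j^i$ when $j\le C$, which is where the choice $C=\lceil 1/\delta\rceil$ is used) to conclude. For $j\ge C+1$ there is no such issue since $m_j/(1+\delta)\ge a_j^i$ already holds with the genuine factor $(1+\delta)$ of slack built into Claim~\ref{claim:ineq:gammaj}. Once Claim~\ref{claim:Ajiaji} is in hand, setting $M=\bigcup_{j=1}^t\bigcup_{i=1}^{n_j}A_j^i$ gives a matching (the $A_j^i$ lie in vertex-disjoint graphs $G_j$, and within each $G_j$ property (L3) makes $\bigcup_i A_j^i$ a matching) with $|M\cap M_j^i|\ge |A_j^i|\ge a_j^i$, which after relabelling the matchings is precisely the conclusion needed for $(\varepsilon,n)$-happiness, since $\ell\le(1+\varepsilon)k$ by Claim~\ref{claim:l-k}.
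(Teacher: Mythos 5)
Your proof is correct and follows essentially the same route as the paper's: split on $j\le C$ versus $j\ge C+1$, use Claim~\ref{claim:ineq:gammaj} together with (L2) in the large-$j$ case, and use integrality of $|A_j^i|$ plus the choice $C=\lceil 1/\delta\rceil$ (giving $j\delta<1+\delta$ hence $\lceil j/(1+\delta)\rceil=j$) in the small-$j$ case. The only cosmetic blemish is the intermediate chain $j\delta\le C\delta\le 1+\delta$, which is non-strict and by itself only yields $j/(1+\delta)\ge j-1$; but you then correctly restate and justify the needed strict inequality $j\delta<1+\delta$, so the argument goes through.
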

	\begin{proof}
	By (L2)  we have $|A_{j}^{i}|\geq m_j/(1+\delta),$ so $$|A_{j}^{i}|\geq \left\lceil \frac{m_j}{1+\delta}\right\rceil.$$ Also recall that $a_j^{i}\in I_j$.
	
	If $j\geq C+1$, since $a_j^{i}\in I_j$, Claim~\ref{claim:ineq:gammaj} yields that $\frac{m_j}{1+\delta}\geq a^{i}_j$. Hence $|A_{j}^{i}|\geq a^{i}_{j}$ in this case.
	
	If $j\leq C$, then $m_j=j=a_{j}^{i}$. In Step 1 we defined $C=\lceil\frac{1}{\delta}\rceil$, so due to $j\leq C$ we have $\frac{j}{1+\delta}>j-1$. Therefore in case $j\leq C$ we have $\lceil\frac{j}{1+\delta}\rceil=j$, and so 
	$$|A_{j}^{i}|\geq \left\lceil \frac{m_j}{1+\delta}\right\rceil=\left\lceil \frac{j}{1+\delta}\right\rceil= j=a_{j}^{i}.$$ 
	\end{proof}

	To summarize our steps toward the proof of Theorem~\ref{thm:main1:1}, for a given sequence $\{a_i\}_{i=1}^{k}$ we split it into $t$ sub-sequences $\{a_{j}^{i}\}_{i=1}^{n^{j}}$ with $j\in[t]$. Then, for given $\ell$ disjoint perfect matchings $M_1,\ldots, M_\ell$ on set $X$ of size $2n$ we found $k$ perfect matchings, labeled $\{M_{j}^{i}\}_{j=1, i=1}^{t, n_j}$, and submatchings $A_{j}^{i}\subseteq M_{j}^{i}$. 
	
	From Claim~\ref{claim:Ajiaji} it follows that for all $j\in[t]$ and $i\in[n_j]$ we have $|A_{j}^{i}|\geq a_{j}^{i}$. 
	
	Notice that for a given $j\in[t]$,  $\bigcup_{i=1}^{n_j} A_{j}^{i}$ is a matching (guaranteed by (L3) and Claim~\ref{claim:b'jnj}) so any two $A_{j}^{i}$ and $A_{j}^{i'}$ are vertex disjoint. Also all vertices of each $A_{j}^{i}$ belong to $V(G_j)$ by (L1). Since $G_{1}, \ldots, G_{t}$ are vertex disjoint we get that $A_{j}^{i}$ and $A_{j'}^{i'}$ are also vertex disjoint for $j\neq j'$. Therefore, all $A_{j}^i$'s are vertex disjoint matchings and so $M=\bigcup_{j=1}^{t} \bigcup_{i=1}^{n_j} A_{j}^{i}$ is a matching. This finishes the proof of Theorem~\ref{thm:main1:1}.

	\section{Proof of Theorem~\ref{thm:main2}}\label{section:main2} Proof of Theorem~\ref{thm:main2} relies on Theorem~\ref{thm:main1} and an application of the following Necklace Theorem by Alon~\cite{Alon}.
		
	\begin{theorem}\label{thm:Alon}
		Consider an open necklace (a string of beads) in which beads are of $\ell$ different colors and the number of beads of each color is divisible by $q$.
		
		Then a group of $q$ thieves can cut the necklace between the beads in at most $\ell(q-1)$ places and distribute the obtained interval pieces between themselves in such a way that each thieve will get the same number of beads of each color.    
	\end{theorem}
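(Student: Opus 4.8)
I would not reprove Alon's theorem from scratch but rather recall its two-part structure: deduce the discrete statement from a \emph{continuous} measure-splitting statement, and prove the latter by a Borsuk--Ulam type argument after reducing to prime $q$. The continuous statement to aim for is: given non-atomic finite Borel measures $\mu_1,\dots,\mu_\ell$ on $[0,1]$, one can pick points $0=y_0\le y_1\le\dots\le y_r\le y_{r+1}=1$ with $r\le \ell(q-1)$ and an assignment $c\colon\{0,\dots,r\}\to\{1,\dots,q\}$ so that, putting $P_s=\bigcup_{u:\,c(u)=s}[y_u,y_{u+1}]$, we have $\mu_i(P_s)=\mu_i([0,1])/q$ for every colour $i$ and thief $s$.

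\textbf{Reductions.} First, continuous $\Rightarrow$ discrete: place bead $j$ on the interval $[j-1,j]$, let $\mu_i$ be Lebesgue measure restricted to the beads of colour $i$, so $\mu_i([0,m])=a_i$ with $q\mid a_i$, and apply the continuous version; since every $\mu_i(P_s)=a_i/q$ is an integer while whole beads contribute integers, a standard rounding argument slides the cuts that fall inside beads to bead boundaries without changing any $\mu_i(P_s)$ and without increasing the number of cuts, yielding a genuine splitting of the bead string with at most $\ell(q-1)$ cuts. Second, in the continuous setting one may reduce to prime $q$: if $q=q_1q_2$, split $[0,1]$ into $q_1$ equal parts with $\le \ell(q_1-1)$ cuts, then concatenate the intervals of each part and split it into $q_2$ equal parts with $\le \ell(q_2-1)$ further cuts, giving a total of at most $\ell(q_1-1)+q_1\ell(q_2-1)=\ell(q_1q_2-1)$ cuts; iterating over the prime factorisation of $q$ leaves only the prime case.

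\textbf{The prime case.} Assume $q$ is prime, set $N=\ell(q-1)+1$, and let $Z=(\mathbb{Z}_q)^{*N}$ be the $N$-fold join of the discrete $q$-point set $\mathbb{Z}_q$; a point of $Z$ encodes interval lengths $t_1,\dots,t_N\ge 0$ with $\sum t_j=1$ together with a thief label $g_j\in\mathbb{Z}_q$ whenever $t_j>0$. The diagonal shift $g_j\mapsto g_j+h$ makes $Z$ a free $\mathbb{Z}_q$-CW-complex of dimension $N-1$, and $Z$ is $(N-2)$-connected. Let $W=\{w\in\mathbb{R}^q:\sum_s w_s=0\}$ with the cyclic $\mathbb{Z}_q$-action and define the test map $F\colon Z\to W^{\oplus\ell}$ whose $i$-th block records the deviations $\bigl(\mu_i(\text{thief }1)-\tfrac1q\mu_i([0,1]),\dots,\mu_i(\text{thief }q)-\tfrac1q\mu_i([0,1])\bigr)$. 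Then $F$ is $\mathbb{Z}_q$-equivariant, and a zero of $F$ is exactly a fair split using at most $N-1=\ell(q-1)$ cuts. If $F$ never vanished, $F/\|F\|$ would be a $\mathbb{Z}_q$-map from the $(N-2)$-connected free complex $Z$ to the sphere $S(W^{\oplus\ell})\cong S^{\ell(q-1)-1}$, which (as $q$ is prime) is a free $\mathbb{Z}_q$-complex of dimension $\ell(q-1)-1=N-2$; this is impossible by the theorem of B\'ar\'any, Shlosman and Sz\H{u}cs (equivalently Dold's theorem), which forbids a $G$-map from an $n$-connected free $G$-complex to a free $G$-complex of dimension at most $n$. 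Hence $F$ has a zero, which is the desired splitting.

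\textbf{Main difficulty.} The heart of the matter is the prime case, where the real work is in choosing the configuration space $Z$ so that the $\mathbb{Z}_q$-action is genuinely free, $Z$ has exactly the connectivity $N-2$, and the equivariant test map $F$ has zero set equal to the fair $\ell(q-1)$-cut splittings, so that the dimension count against B\'ar\'any--Shlosman--Sz\H{u}cs / Dold is tight; primality enters precisely because the cyclic action on $W\setminus\{0\}$ is free only then. By comparison, the continuous-to-discrete rounding (which needs $q\mid a_i$) and the prime-power composition (which needs a little care with unions of intervals at the second stage) are routine.
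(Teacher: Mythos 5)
The paper does not actually prove Theorem~\ref{thm:Alon}: it is quoted as a black box from Alon's 1987 paper \cite{Alon}, so there is no ``paper's proof'' for you to match. Your sketch is nevertheless a faithful outline of the standard argument for Alon's necklace theorem: the continuous-to-discrete rounding (using $q\mid a_i$), the prime-power composition with the count $\ell(q_1-1)+q_1\ell(q_2-1)=\ell(q_1q_2-1)$, and the prime case via the $(N-2)$-connected free $\mathbb{Z}_q$-join $Z=(\mathbb{Z}_q)^{*N}$ with $N=\ell(q-1)+1$, the equivariant deviation map into $W^{\oplus\ell}$, and the dimension count against Dold/B\'ar\'any--Shlosman--Sz\H{u}cs are all correct, including the remark that primality of $q$ is exactly what makes the cyclic action on $W\setminus\{0\}$ free. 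In short, the proposal is a correct reconstruction of the cited proof; no gap.
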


	For $\varepsilon>0$ and sufficiently large $n$, let $a_1, \ldots, a_k$ be a sequence of integers such that $\sum a_i < (1-\varepsilon)n$ and $a_i\geq n^{\varepsilon}$ for all $i\in[k]$. 	
	
	We may assume that $\{a_i\}_{i=1}^{k}$ is non-decreasing, and that 
	\begin{equation}\label{ineq:ii1}
	 \sum_{i=1}^{k}a_i \geq \frac{n}{3},
	\end{equation}
	Indeed, in case $ \sum_{i=1}^{k}a_i < \frac{n}{3}$, for any $k$ matchings $M_1, \ldots, M_k$ we can find desired subsets $A_1,\ldots, A_k$ greedily.
	
	The goal of this section is to show that sequence $\{a_{i}\}_{i=1}^{k}$ is $(\varepsilon,n)$-happy. Note that Theorem~\ref{thm:main1} implies that for any $\delta>0$, provided $n$ is large, every positive sequence $\{b_i\}_{i=1}^{\ell}$ that satisfies $\sum b_i\leq (1-\delta)n$ and $b_i\leq n^{1-\delta}$ is $(\delta,n)$-happy. 
	
	\subsection{Proof plan} The proof idea Therorem~\ref{thm:main2} is to modify initial sequence $\{a_i\}_{i=1}^{k}$ by a series of `non-happiness' preserving operations until we obtain a happy sequence, this would imply that initial sequence $\{a_i\}_{i=1}^{k}$ is also happy. 
	More specifically, let $\mathcal{A}_0=\{a_{i,0}\}_{i=1}^{k}$ be $\{a_i\}_{i=1}^k$, we recursively define new non-decreasing sequences $\mathcal{A}_1, \ldots, \mathcal{A}_s$ (here $s<k$) such that each sequence $\mathcal{A}_{j}$ consists of $k-j$ elements for $j\in[0,s]$ and the last sequence $\mathcal{A}_s$ is $(\varepsilon, n)$-happy. 
	
	The recursive definition goes as follows: assume that non-decreasing sequence $\mathcal{A}_{j}=\{a_{i,j}\}_{i=1}^{k-j}$ for some $j\in[0,s-1]$ is constructed with the sum of elements at most $(1-\varepsilon/2)n$. In what follows, $\delta$ is a constant much smaller than $\varepsilon$. 
	
	If $a_{k-j,j}$ is at most $n^{1-\delta}$, then we stop the construction and set $s=j$. The sequence $\mathcal{A}_{s}$ is $(\delta, n)$-happy by Theorem~\ref{thm:main1}. For that, we assume that $n$ is at least $n'$, where $n'$ is the number guaranteed by Theorem~\ref{thm:main1} for $\delta$. Finally, as $\delta<\varepsilon$, sequence $\mathcal{A}_{s}$ is also $(\varepsilon,n)$-happy.
	
	 If $a_{k-j,j}$ is larger than $n^{1-\delta}$, we construct new sequence $\mathcal{A}_{j+1}$ such that the elements of $\mathcal{A}_{j+1}$ are taken to be all but the last element of $\mathcal{A}_{j}$ rescaled by some factor (see (\ref{eq:ii4})). 
	 
	 This rescaling factor is chosen so that the sum of elements of $\mathcal{A}_{j+1}$ is larger then sum of elements of $\mathcal{A}_{j}$ by roughly a factor of $(1+\lambda)$, independent of $j$ (Claim~\ref{claim:ail1}(c)). In turn, $\lambda$ is chosen so that the constructed sequences $\mathcal{A}_{0}, \ldots, \mathcal{A}_s$ all have a sum at most $(1-\varepsilon/2)n$ (Claim~\ref{claim:ail2}(a)). 
	 
	 Construction of sequences $\mathcal{A}_{0}, \ldots, \mathcal{A}_s$ stops when the last sequence $\mathcal{A}_s$ is $(\delta,n)$-happy, which will happen eventually as the sequence consisting of single element $\{b\}$ with $b<n$ is $(\varepsilon,n)$-happy by definition.
	
	 Finally, Claim~\ref{claim:nice->nice} shows that if $\mathcal{A}_j$ is $(\varepsilon, n)$-happy then $\mathcal{A}_{j-1}$ is also $(\varepsilon,n)$-happy. Since the construction ends with a sequence $\mathcal{A}_s$ that is $(\varepsilon,n)$-happy, this implies, by repeated application of  Claim~\ref{claim:nice->nice}, that all previous sequences $\mathcal{A}_{j}$ with $j\in[0,s-1]$, in particular $\mathcal{A}_0$, are also $(\varepsilon,n)$-happy. Claim~\ref{claim:nice->nice} is a corollary of a rather general Lemma~\ref{lemma:nice->nice} that allows us to verify that a sequence is $(\varepsilon, n)$-happy, provided  some other sequence is $(\varepsilon,n)$-happy, and the main ingredient of proof of Lemma~\ref{lemma:nice->nice} is Theorem~\ref{thm:Alon}.

	\subsection{Formal proof} Define the auxiliary variables 
	\begin{equation}\label{eq:constants}
	\delta=\frac{\varepsilon}{10}, \qquad \mu=\frac{\varepsilon}{2(1-\varepsilon)}, \qquad S=\lceil 2(1+\mu)n^{\delta} \ln n\rceil, \qquad \lambda=\frac{\ln (1+\mu)}{S}.
	\end{equation}
	
	To explain the choice of variables, $(1+\lambda)$ will determine how much $\sum_{i=1}^{k-j+1}a_{i,j-1}$ is smaller than $\sum_{i=1}^{k-j}a_{i,j}$ for all $j\in [s] $ (Claim~\ref{claim:ail1}(c)). Constant $\mu$ is chosen to guarantee that
	\begin{equation}\label{ineq:ii2}
	(1+\lambda)^{S}\leq (1+\mu)=\frac{1-\varepsilon/2}{1-\varepsilon}.
	\end{equation}
	Inequality~(\ref{ineq:ii2}) follows easily from inequality $1+x\leq e^x$ and is used in proving Claim~\ref{claim:ail2}(a). Finally, $S$ will be shown to be an upper bound on $s$ in Claim~\ref{claim:ail2}(c)
	
	Provided $n$ is large enough, the following inequalities hold:
	\begin{equation}\label{ineq:ii3}
	n^{-2\delta} \leq \lambda \leq 1, \qquad \qquad n^{\delta}\leq S\leq n^{2\delta}.
	\end{equation}
	
	{\bf Construction of sequences $\mathcal{A}_j$.} Recall that we assumed that $\{a_i\}_{i=1}^{k}$ is non-decreasing. We now construct $s+1$ non-decreasing sequences $\mathcal{A}_{j}=\{a_{i,j}\}_{i=1}^{k-j}$ with $j\in[0,s]$. Set $\mathcal{A}_0=\{a_{i}\}_{i=1}^{k}$ and assume that sequence $\mathcal{A}_{j}=\{a_{i,j}\}_{i=1}^{k-j}$ was constructed for some integer $j$. Set $$\s{j}=\sum_{i=1}^{k-j}a_{i,j}.$$ If $a_{k-j, j}>n^{1-\delta}$, then define 
	$$m_{j}=\frac{a_{k-j,j}}{\s{j}},$$
	and for $i\in [k-j-1]$ define the terms of sequence $\mathcal{A}_{j+1}=\{a_{i,j+1}\}_{i=1}^{k-j-1}$ by
	\begin{equation}\label{eq:ii4}
		a_{i,j+1}=\left \lfloor \frac{(1+\lambda)}{1-m_j}a_{i,j}\right \rfloor.
	\end{equation}
	If $a_{k-j,j}\leq n^{1-\delta}$ or $j=k-1$, the construction stops and we set $s=j$. 
	
	We start with proving a few straightforward, but slightly technical observations about constructed sequences.

	\begin{claim}\label{claim:ail1} For the sequences  $\mathcal{A}_0, \ldots, \mathcal{A}_{s}$ we have the following:
		\begin{itemize}
			\item[(a)] $a_{i,j}\leq a_{i,j+1}$ for all $j \in [0,s-1]$ and $i\in [k-j-1]$.
			\item[(b)] For each $j\in[0,s]$ sequence $\mathcal{A}_j$ is non-decreasing.
			\item[(c)] For each $j \in [0,s-1]$
			$$\left(1+\frac{\lambda}{2}\right)\s{j}\leq \s{j+1} \leq (1+\lambda)\s{j}.$$
		\end{itemize}
	\end{claim}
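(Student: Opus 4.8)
\textbf{Proof proposal for Claim~\ref{claim:ail1}.}

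The plan is to prove the three parts essentially in the order stated, since each one feeds into the next, and to reduce everything to the defining recursion~(\ref{eq:ii4}) together with the elementary bounds~(\ref{ineq:ii3}) on $\lambda$ and the fact that at each surviving step we are in the case $a_{k-j,j}>n^{1-\delta}$.

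For part (a), I would start from the definition $a_{i,j+1}=\lfloor \frac{1+\lambda}{1-m_j}a_{i,j}\rfloor$ and note that $m_j=a_{k-j,j}/\s{j}\in(0,1)$ (strictly positive, and strictly less than $1$ since the sequence has at least two terms at this stage and all terms are positive), so $\frac{1+\lambda}{1-m_j}>1+\lambda>1$; hence $\frac{1+\lambda}{1-m_j}a_{i,j}\geq a_{i,j}+\lambda a_{i,j}$. Since $a_{i,j}\geq n^{\varepsilon}\geq n^{\varepsilon/5}\geq 1$ (indeed $a_{i,0}\geq n^\varepsilon$ and part (a) will inductively show the terms only grow) and $\lambda\geq n^{-2\delta}=n^{-\varepsilon/5}$ by~(\ref{ineq:ii3}), the quantity $\lambda a_{i,j}$ is at least $n^{\varepsilon}\cdot n^{-\varepsilon/5}\geq 1$ for large $n$, so even after taking the floor we keep $a_{i,j+1}\geq a_{i,j}$. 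The main thing to be careful about is making the lower bound $a_{i,j}\geq n^\varepsilon$ available at every level $j$; this follows by induction using part (a) itself for the previous level, so (a) should really be proved by induction on $j$, carrying along the invariant that every term of $\mathcal{A}_j$ is $\geq n^\varepsilon$.

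For part (b), the non-decreasing property is inherited: $\mathcal{A}_0$ is non-decreasing by assumption, and if $\mathcal{A}_j=\{a_{i,j}\}_{i=1}^{k-j}$ is non-decreasing then $\mathcal{A}_{j+1}$ is obtained by dropping the last (largest) term and applying the \emph{monotone} map $x\mapsto\lfloor\frac{1+\lambda}{1-m_j}x\rfloor$ to the rest, and the floor of a monotone function of a non-decreasing sequence is non-decreasing. So (b) is immediate from (a)'s construction.

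For part (c), write $\s{j+1}=\sum_{i=1}^{k-j-1}a_{i,j+1}$ and compare termwise with $\frac{1+\lambda}{1-m_j}a_{i,j}$. For the upper bound, $a_{i,j+1}\leq \frac{1+\lambda}{1-m_j}a_{i,j}$, so $\s{j+1}\leq \frac{1+\lambda}{1-m_j}\sum_{i=1}^{k-j-1}a_{i,j}=\frac{1+\lambda}{1-m_j}(\s{j}-a_{k-j,j})=\frac{1+\lambda}{1-m_j}\cdot\s{j}(1-m_j)=(1+\lambda)\s{j}$, which is exactly the clean telescoping identity the construction was designed to produce. For the lower bound, the only loss is from the floors: $a_{i,j+1}\geq \frac{1+\lambda}{1-m_j}a_{i,j}-1$, so $\s{j+1}\geq (1+\lambda)\s{j}-(k-j-1)$. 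It then remains to absorb the error $(k-j-1)<k$: since $(1+\lambda)\s{j}-\s{j}=\lambda\s{j}\geq \lambda\cdot \sigma_0\geq \lambda\cdot n/3$ by~(\ref{ineq:ii1}) and $\lambda\geq n^{-2\delta}=n^{-\varepsilon/5}$, we have $\lambda\s{j}\geq n^{1-\varepsilon/5}/3$, which for large $n$ dwarfs $k\leq n$... so here I need to be a little more careful, using instead that $k\leq \s{0}/n^\varepsilon\leq n/n^\varepsilon=n^{1-\varepsilon}$ (each term is at least $n^\varepsilon$ and the sum is less than $n$), hence the error $k-j-1< n^{1-\varepsilon}$ is at most $\frac{\lambda}{2}\s{j}$ once $n^{1-\varepsilon}\leq \frac12 n^{-\varepsilon/5}\cdot \frac{n}{3}=\frac{1}{6}n^{1-\varepsilon/5}$, i.e. for large $n$. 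Therefore $\s{j+1}\geq (1+\lambda)\s{j}-\tfrac{\lambda}{2}\s{j}=(1+\tfrac{\lambda}{2})\s{j}$, as required.

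The main obstacle is the bookkeeping in part (c)'s lower bound: one must guarantee that the cumulative floor errors (of which there are $k-j-1$, one per surviving term) are genuinely negligible compared to $\lambda\s{j}$, and the cleanest way to see this is to exploit that $k\leq n^{1-\varepsilon}$ because every $a_{i,0}\geq n^\varepsilon$ while $\sum a_{i,0}<n$; the rest is mechanical.
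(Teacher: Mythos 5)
Your proposal is correct and follows essentially the same route as the paper: part (a) by induction carrying the invariant $a_{i,j}\ge n^{\varepsilon}$ so that the floor is absorbed, part (c) via the telescoping identity $\sum_{i\le k-j-1}a_{i,j}=(1-m_j)\s{j}$ for the upper bound and then swallowing the $(k-j-1)$ cumulative floor errors using $k\le n^{1-\varepsilon}$ for the lower bound. Your part (b) is in fact cleaner than the paper's: you simply observe that $x\mapsto\lfloor\tfrac{1+\lambda}{1-m_{j-1}}x\rfloor$ is a non-decreasing map, which immediately preserves monotonicity, whereas the paper runs an unnecessary two-case argument depending on whether $a_{i,j-1}=a_{i+1,j-1}$ or $a_{i+1,j-1}\ge a_{i,j-1}+1$. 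The only cosmetic difference in (c) is that you lower-bound $\s{j}$ by $\sigma_0\ge n/3$ via~(\ref{ineq:ii1}) while the paper uses $\s{j}\ge (k-j)n^{\varepsilon}$; both suffice.
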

	\begin{proof}[Proof of Claim] The proof is straightforward and uses equation~(\ref{eq:ii4}) and the fact that $n$ is sufficiently large.
			
		\par {\bf Part (a).}
		The proof is by induction on $j$. The base case $j=0$ and inductive step are proved simultaneously. Assume that for some $h\in[0,s-1]$ we proved the statement for all values of $j< h$, that is assume that for all $j\in[h]$ we have $a_{i,j}\leq a_{i,j+1}$ for all $i\in[k-j+1]$. Then for all $i\in[k-h]$ we have 
		$$a_{i,h}\geq a_{i,h-1}\geq \ldots \geq  a_{i,0}\geq n^{\varepsilon}$$ and so
		$$a_{i,h+1} \stackrel{\text{(\ref{eq:ii4})}}{\geq} \frac{(1+\lambda)}{1-m_h}a_{i,h}-1 \geq (1+\lambda)a_{i,h}-1\geq a_{i,h}+(\lambda n^\varepsilon-1).$$
		Then, by inequality~(\ref{ineq:ii3}) and provided $n$ is sufficiently large
		$$a_{i,h+1}\geq a_{i,h}+(n^{\varepsilon-2\delta}-1)>a_{i,h}.$$
	
		{\bf Part (b).} Again, the proof is by induction on $j$. The base case $j=0$ holds because $\mathcal{A}_0=\{a_{i,0}\}_{i=1}^k$ is assumed to be non-decreasing. For some $j\in[s]$, assume that $a_{i,j-1}\leq a_{i+1,j-1}$ holds for all $i\in [k-j]$. 
		
		Our goal is to show that $a_{i,j}\leq a_{i+1,j}$ for all $i\in [k-j-1]$. If for some $i\in[k-j-1]$ we have that $a_{i,j-1}=a_{i+1,j-1}$, then $a_{i,j}=a_{i+1,j}$ by (\ref{eq:ii4}). 
		
		So we may assume that $a_{i+1,j-1}\geq a_{i,j-1}+1$. Then
		$$ a_{i+1,j} \stackrel{\text{(\ref{eq:ii4})}}{\geq} \frac{(1+\lambda)}{1-m_j}a_{i+1,j-1}-1\geq \frac{(1+\lambda)}{1-m_j}(a_{i,j-1}+1)-1 \geq \frac{(1+\lambda)}{1-m_j}a_{i,j-1} \stackrel{\text{(\ref{eq:ii4})}}{\geq}  a_{i,j}.$$
		Therefore for all  $i\in [k-j-1]$ we have  $a_{i,j}\leq a_{i+1,j}$.
		
		{\bf Part (c).} Note that $\frac{1}{1-m_j}\sum_{i=1}^{k-j-1}a_{i,j}$ is equal to $\s{j}$ by definition of $m_j$. Then,
		$$\s{j+1} \stackrel{\text{(\ref{eq:ii4})}}{\leq} \sum_{i=1}^{k-j-1}\frac{(1+\lambda)}{1-m_j}a_{i,j}=(1+\lambda)\s{j}.$$ 
		Similarly, for the lower bound, 
		$$\s{j+1} \stackrel{\text{(\ref{eq:ii4})}}{\geq} \sum_{i=1}^{k-j-1}\left(\frac{(1+\lambda)}{1-m_j}a_{i,j}-1\right)=\left(1+\frac{\lambda}{2}\right)\s{j}+\frac{\lambda}{2}\s{j}-(k-j-1).$$ 
		Now, Claim~\ref{claim:ail1}(a) implies that $a_{i,j}\geq a_{i,0}\geq n^{\varepsilon}$ for all $j\in[0,s]$, $i\in[k-j]$. Using inequality (\ref{ineq:ii3}), provided $n$ is sufficiently large, we get
		$$\s{j+1} \geq \left(1+\frac{\lambda}{2}\right)\s{j}+\frac{n^{-2\delta}}{2}(k-j)n^{\varepsilon}-(k-j-1)\geq \left(1+\frac{\lambda}{2}\right)\s{j} .$$ 

		\end{proof}
		
		The next Claim, in particular, shows that the construction of sequences $\mathcal{A}_j$'s ends after at most $S$ steps.
		
		\begin{claim}\label{claim:ail2} Let the sequences $\mathcal{A}_0,\ldots, \mathcal{A}_s$ and auxiliary $m_{0}, \ldots, m_{s-1}$ be constructed as described above, then
			\begin{itemize}
				\item[(a)] $\s{j}\leq (1-\frac{\varepsilon}{2})n$ for all $j \in [0, s]$.
				\item[(b)] $m_{j}\geq n^{-\delta}$ for all $j \in [0, s-1]$. 
				\item[(c)] $s< S$.
			\end{itemize}
		\end{claim}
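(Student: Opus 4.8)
**The plan is to prove the three parts of Claim~\ref{claim:ail2} together, using induction on $j$ and the inequalities already established in Claim~\ref{claim:ail1}.**

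First I would prove part (a). The base case $j=0$ holds because $\s{0}=\sum a_i<(1-\varepsilon)n<(1-\varepsilon/2)n$. For the inductive step, by Claim~\ref{claim:ail1}(c) we have $\s{j}\leq(1+\lambda)^{j}\s{0}$, so it suffices to show $(1+\lambda)^{s}\s{0}\leq(1-\varepsilon/2)n$. Since $\s{0}<(1-\varepsilon)n$, it is enough to have $(1+\lambda)^{s}\leq(1-\varepsilon/2)/(1-\varepsilon)=1+\mu$, which by~(\ref{ineq:ii2}) follows from $s\leq S$. But $s\leq S$ is part (c), so to avoid circularity I would instead argue as follows: prove (b) first (it does not depend on (a) or (c)), then use (b) to bound $s$ directly and get (c), then deduce (a). Let me reorganize: the clean order is \textbf{(b), then (c), then (a)}.

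For part (b): by definition $m_j=a_{k-j,j}/\s{j}$. The construction only proceeds to step $j+1$ when $a_{k-j,j}>n^{1-\delta}$, so $m_j>n^{1-\delta}/\s{j}$. Since $\s{j}<n$ (we may assume this holds throughout — it is implied by (a), but can also be seen crudely: if $\s{j}\geq n$ the construction would already have produced a happy sequence, or one argues $\s{j}\le (1+\lambda)^S\s0<n$ using only~(\ref{ineq:ii2}) and $\s0<(1-\varepsilon)n$, which needs no induction on the stopping time), we get $m_j>n^{1-\delta}/n=n^{-\delta}$. For part (c): each step with $j<s$ has $a_{k-j,j}>n^{1-\delta}$, hence $m_j\geq n^{-\delta}$; feeding this into Claim~\ref{claim:ail1}(c), or more directly using $1-m_j\leq 1-n^{-\delta}$ in~(\ref{eq:ii4}), each step multiplies the sum by at least $(1+\lambda)/(1-n^{-\delta})\geq(1+\lambda)(1+n^{-\delta})\geq 1+n^{-\delta}$. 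After $S$ such steps the sum would exceed $(1+n^{-\delta})^{S}\s0\geq e^{n^{-\delta}S/2}\s0$; since $S\geq n^{\delta}\cdot 2(1+\mu)\ln n$ by~(\ref{ineq:ii3}) and the definition of $S$, this forces the sum above $(1-\varepsilon/2)n$, contradicting the crude bound on $\s{j}$. Hence $s<S$. Finally, with $s<S$ in hand, part (a) follows as above: $\s{j}\leq\s{s}\leq(1+\lambda)^{s}\s0<(1+\lambda)^{S}(1-\varepsilon)n\leq(1+\mu)(1-\varepsilon)n=(1-\varepsilon/2)n$.

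\textbf{The main obstacle} is untangling the apparent circular dependence: part (a)'s natural proof wants $s\leq S$, part (c)'s natural proof wants the sums to stay bounded (which is part (a)). The resolution is to notice that the bound $\s{j}\leq(1+\lambda)^{j}\s0$ from Claim~\ref{claim:ail1}(c) together with~(\ref{ineq:ii2}) gives $\s{j}\leq(1-\varepsilon/2)n$ \emph{as soon as} $j\leq S$, with no reference to the stopping time $s$ — so one never needs $s$ to be small to control the sums along the way; one only needs $j\leq S$, and separately one shows the construction cannot run past step $S$ because the sums grow geometrically by a factor bounded below by $1+n^{-\delta}>1$ at every step, which would eventually violate the ceiling. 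So the per-step \emph{lower} bound on growth (from $m_j\geq n^{-\delta}$) forces termination, while the per-step \emph{upper} bound on growth (factor $\leq 1+\lambda$) keeps the sums under control; the two are consistent precisely because of the calibration~(\ref{ineq:ii2}) and~(\ref{ineq:ii3}) of $\lambda,\mu,S$. The remaining work is the routine inequality chasing with $1+x\leq e^x$ and the estimates in~(\ref{ineq:ii3}), which I would not spell out in detail.
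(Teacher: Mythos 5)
Your reorganization of (a) and (b) — proving them under the hypothesis $j\leq S$ and then separately showing $s<S$ — matches the paper, and that part is fine. The problem is your proposed proof of (c). You assert that each step multiplies the sum $\s{j}$ by at least $\frac{1+\lambda}{1-m_j}\geq (1+\lambda)(1+n^{-\delta})\geq 1+n^{-\delta}$, and conclude that after $S$ steps the sum would blow past $(1-\varepsilon/2)n$. But this directly contradicts Claim~\ref{claim:ail1}(c), which you yourself cite, and which gives $\s{j+1}\leq(1+\lambda)\s{j}$. Indeed $\frac{1+\lambda}{1-m_j}$ is the multiplicative factor applied to each \emph{surviving} element, but the sum of the survivors before rescaling is $(1-m_j)\s{j}$ (the largest element, an $m_j$-fraction of $\s{j}$, is discarded), so the $\frac{1}{1-m_j}$ inflation is exactly cancelled and the sum grows by only $\approx 1+\lambda$. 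Since by~(\ref{eq:constants}) one has $\lambda\approx \frac{1}{2n^{\delta}\ln n}\ll n^{-\delta}$, the sum grows far too slowly for your contradiction to materialize: $(1+\lambda)^{S}\leq 1+\mu$ by~(\ref{ineq:ii2}), which is bounded, not diverging.

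The paper's actual argument for (c) is structured quite differently and avoids this pitfall. It tracks, for each $h$, the ratio $f_{h,j}=a_{k-h,j}/\sum_{i\leq k-h}a_{i,j}$ of a fixed element to its partial sum, shows $f_{h,j}\leq(1+\lambda)f_{h,j-1}$, and thus $m_h=f_{h,h}\leq(1+\mu)f_{h,0}$. Combined with $m_h\geq n^{-\delta}$ from (b), this yields the inequality $a_{k-h}\geq\frac{n^{-\delta}}{1+\mu}(a_1+\cdots+a_{k-h})$ \emph{in the original sequence} for every $h<S$. Iterating that inequality $S$ times on the original (not the rescaled) partial sums gives a compounding factor of $\bigl(1+\frac{n^{-\delta}}{1+\mu}\bigr)^{S}\geq e^{\ln n}=n$, forcing $a_k>n$, a contradiction. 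Working with the original sequence is essential: it sidesteps both the sum-cancellation issue above and the cumulative loss from the floor function, which is why the paper's constants close while a direct per-element growth estimate on the $\mathcal{A}_j$'s (the most charitable reading of your proposal) falls short by a factor of roughly $2$ in the exponent. You would need to supply this — or an equivalent — mechanism to repair part (c).
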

		\begin{proof}
		We start with proving parts (a) and (b) with additional assumption that $j\leq S$. Then we prove part (c), which in turn implies parts (a), (b) for all $j\leq s$.	
		
		{\bf Part (a).} Assume $j \leq S$, then by repeatedly applying Claim~\ref{claim:ail1}(c) and inequality~(\ref{ineq:ii2}) we get
		$$\s{j}\leq (1+\lambda)^{j}\sigma_0= (1+\lambda)^{S}\sum_{i=1}^{k}{a_{i,0}}\leq \frac{1-\varepsilon/2}{1-\varepsilon}(1-\varepsilon)n=\left(1-\frac{\varepsilon}{2}\right)n.$$
		
		{\bf Part (b).} Assume $j \leq S$. If $m_j$ is defined, then $a_{k-j,j}\geq n^{1-\delta}$ so by Claim~\ref{claim:ail2}(a) 
		$$m_j=\frac{a_{k-j,j}}{\s{j}}\geq \frac{n^{1-\delta}}{n} \geq n^{-\delta}.$$
		
		{\bf Part (c).} Assume contrary, that is that the sequence $\mathcal{A}_S=\{a_{i,S}\}_{i=1}^{k-S}$ is defined. For $h\in [0,S-1]$ and $j\in[0, h]$ define 
		$$f_{h,j}=\frac{a_{k-h,j}}{\sum_{i=1}^{k-h}a_{i,j}}.$$
		
		Note that $f_{h,h}=m_h$. In what follows we will show that $f_{h,h}\leq (1+\mu)f_{h,0}$ for all $h\in [0,S-1]$ and will use this to show that $a_k>n$, deriving a contradiction.
		
		We will make use of a lower bound for each $a_{i,j}$ with $j\in [S]$, $i\in[k-j]$.  Claim~\ref{claim:ail1}(a) implies that $a_{i,j}\geq a_{i,0}\geq n^{\varepsilon}$ and so
		$$a_{i,j}\stackrel{\text{(\ref{eq:ii4})}}{\geq} \frac{1+\lambda}{1-m_{j-1}}a_{i,j-1}-1\geq \frac{1+\frac{\lambda}{2}}{1-m_{j-1}}a_{i,j-1}+\frac{\lambda}{2}a_{i,j}-1\stackrel{\text{(\ref{ineq:ii3})}}{\geq} \frac{1+\frac{\lambda}{2}}{1-m_{j-1}}a_{i,j-1}+\frac{n^{-2\delta}}{2}n^\varepsilon-1.$$
		Therefore, for large $n$,
		\begin{equation}\label{eq:aijp21}
		a_{i,j} \geq \frac{1+\frac{\lambda}{2}}{1-m_{j-1}}a_{i,j-1}.
		\end{equation}	
		Then for any $h\in[0,S-1]$, $i\in[0,h]$ by (\ref{eq:aijp21}) and definition of $a_{k-h,j}$~(\ref{eq:ii4}),
		$$f_{h,j}=\frac{a_{k-h,j}}{\sum_{i=1}^{k-h}a_{i,j}}\leq \frac{\frac{1+\lambda}{1-m_{j-1}}a_{k-h,j-1}}{\sum_{i=1}^{k-h}\frac{1+\frac{\lambda}{2}}{1-m_{j-1}}a_{i,j-1}}=\frac{1+\lambda}{1+\frac{\lambda}{2}}f_{h,j-1}.$$
		
		Therefore, $f_{h,j}\leq (1+\lambda)f_{h,j-1}$ for all $h\in[0,S-1]$ and $j\in[h]$. Applying this inequality $h$ times we get,
		$$m_h=f_{h,h}\leq (1+\lambda)^{h}f_{h,0}.$$
		Now, for $h\in[0,S-1]$, by inequality~(\ref{ineq:ii2}),
		$$m_{h}\leq (1+\mu)f_{h,0}.$$
		Therefore, $f_{h,0}\geq \frac{1}{1+\mu}m_h$ and by Claim~\ref{claim:ail2}(b) we get that $f_{h,0}\geq \frac{n^{-\delta}}{1+\mu}$ for all $h\in [0, S-1]$. This means that for all $h \in [0, S-1]$,
		\begin{equation}\label{eq:ak-h}
		a_{k-h}\geq \frac{n^{-\delta}}{1+\mu}\left(a_{1}+\ldots+ a_{k-h}\right).
		\end{equation}
		Thus by successive application of (\ref{eq:ak-h}) for $h\in[0, S-1]$ we obtain a lower bound on $a_k$, 
		\begin{align*}
		a_k& \geq\frac{n^{-\delta}}{1+\mu}(a_1+\ldots+a_k)\\
		& \geq \frac{n^{-\delta}}{1+\mu}\left(1+\frac{n^{-\delta}}{1+\mu}\right)(a_1+\ldots+a_{k-1})\\
		& \geq \frac{n^{-\delta}}{1+\mu}\left(1+\frac{n^{-\delta}}{1+\mu}\right)^2(a_1+\ldots+a_{k-2})\\
		& \vdots \\
		& \geq \frac{n^{-\delta}}{1+\mu}\left(1+\frac{n^{-\delta}}{1+\mu}\right)^S(a_1+\ldots+a_{k-S})	
		\end{align*}
		Now, by using inequality $1+x\geq e^{x/2}$ for $x\in (0,2)$ and the fact that each $a_i$ is at least $n^{\varepsilon}$, for $n$ large enough
		$$a_k\geq \frac{n^{-\delta}}{1+\mu}e^{n^{-\delta}S/(2+2\mu)}n^{\varepsilon}\stackrel{\text{(\ref{eq:constants})}}{\geq}\frac{n^{\varepsilon-\delta}}{1+\mu}e^{\ln n}>n, $$ 
		contradicting the assumption that $s\geq S$.
	\end{proof}
	
	We now show that if $\{a_{i,j}\}_{i=1}^{k-j}$ is happy, than the preceding sequence $\{a_{i,j-1}\}_{i=1}^{k-j+1}$ is also happy. To show that we prove a rather more general lemma that allows to verify that sequence of length $t+1$ is happy, provided a certain sequence of length $t$ is happy.
	
	\begin{lemma}\label{lemma:nice->nice}
		Assume that a sequence $\{b_{i}\}_{i=1}^{t}$ is $(\varepsilon,n)$-happy and that $\frac{p}{q}$ is a positive proper fraction. If $\{a_{i}\}_{i=1}^{t+1}$ is a sequence of integers that satisfies $a_i\leq (1-\frac{p}{q})b_i-q$ for $i\in [t]$ and $a_{t+1}\leq \frac{p}{q}(\sum_{i=1}^{t}b_i)-3(t+1)q$, then $\{a_i\}_{i=1}^{t+1}$ is also $(\varepsilon,n)$-happy.
	\end{lemma}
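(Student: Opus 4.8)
The plan is to use the $(\varepsilon,n)$-happiness of $\{b_i\}_{i=1}^t$ together with Alon's Necklace Theorem (Theorem~\ref{thm:Alon}) to manufacture, from any large family of disjoint perfect matchings, a suitable sub-family on which we can realize the $\{a_i\}_{i=1}^{t+1}$ targets. First I would fix $\ell' \in [t+1, (1+\varepsilon)(t+1)]$ witnessing happiness of the target sequence — more precisely, I would run the happiness of $\{b_i\}_{i=1}^t$ at the corresponding scale: there is $\ell \in [t,(1+\varepsilon)t]$ such that any $\ell$ disjoint perfect matchings of $K_{2n}$ contain $t$ of them $B_1,\dots,B_t$ carrying a matching $N$ with $|N\cap B_i|\ge b_i$. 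I would take $\ell^\ast = \ell + 1$ (or a value in $[t+1,(1+\varepsilon)(t+1)]$ slightly above $\ell$) as the witness for $\{a_i\}_{i=1}^{t+1}$; checking $\ell^\ast \le (1+\varepsilon)(t+1)$ is a one-line arithmetic step.

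Given $\ell^\ast$ disjoint perfect matchings $M_1,\dots,M_{\ell^\ast}$ of $K_{2n}$, I would set one matching aside, say $M_{\ell^\ast}$, to be dedicated to the $(t+1)$-st target, and apply happiness of $\{b_i\}$ to the remaining $\ell^\ast - 1 = \ell$ matchings to obtain $t$ of them, relabeled $M_1,\dots,M_t$, and a matching $N$ with $|N\cap M_i|\ge b_i$. For each $i\in[t]$ pick $A_i' \subseteq N\cap M_i$ with $|A_i'| = b_i$ exactly. Now comes the necklace step: view the edges of $A_i'$ as beads of colour $i$, arranged in some order along a necklace, and also include edges of $M_{\ell^\ast}$ (restricted to the vertex set not yet touched — or handled as a separate colour class) so that colour $t+1$ has enough beads. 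Rescale/pad each colour class so its size is divisible by $q$, then apply Theorem~\ref{thm:Alon} with $q$ thieves: the necklace is cut in at most $(t+1)(q-1)$ places and the pieces distributed so each thief receives exactly a $1/q$ fraction of each colour. Assigning the union of $q-p$ of the thieves' shares as the "keep" set, for each colour $i\in[t]$ we retain $\frac{q-p}{q}b_i = (1-\frac pq)b_i$ beads minus the loss from at most $(t+1)(q-1)$ cuts, which costs at most $q$ edges per colour near a cut; this is exactly where the slack $-q$ in the hypothesis $a_i \le (1-\frac pq)b_i - q$ is consumed. For colour $t+1$, the $p$ thieves' combined share near cuts absorbs the $-3(t+1)q$ slack (the factor $3$ and the extra $(t+1)$ accounting for cuts interacting with all colour classes at once).

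The key points to verify carefully: (a) the objects chosen for colour $t+1$ — edges drawn from $M_{\ell^\ast}$ together with any leftover edges — form a matching that is vertex-disjoint from $\bigcup_{i\le t}A_i'$, so that the final union is genuinely a matching; (b) after the necklace cuts, the retained pieces on each colour class still form a matching (they do, since any sub-collection of a matching is a matching, and the thieves' shares of distinct colours live on disjoint edge sets that were already vertex-disjoint); (c) the divisibility-by-$q$ padding is harmless, done by adding dummy beads that get discarded. The main obstacle I expect is bookkeeping the edge losses at the $(t+1)(q-1)$ cut points so that they fit inside the stated slacks $-q$ and $-3(t+1)q$ — in particular making sure the $(t+1)$-st colour, which may straddle many cuts, only loses $O((t+1)q)$ edges, which forces the asymmetric $3(t+1)q$ term. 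Everything else is routine: one applies happiness of $\{b_i\}$ as a black box and the Necklace Theorem as a black box, and the only real content is the loss accounting.
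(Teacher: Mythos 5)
Your high-level framing --- run happiness of $\{b_i\}$ on $\ell$ of the matchings, dedicate an extra matching to colour $t+1$, apply Alon's Necklace Theorem with $q$ thieves split into $p$ ``new'' and $q-p$ ``old'', and use the slacks to absorb losses --- matches the paper. But there is a genuine gap in the middle, and it is not a bookkeeping detail: you have not specified the necklace in a way that makes the final union a matching, and your sanity-check (b) is in fact false.

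The edges of colour $t+1$ come from a perfect matching $M_{t+1}$ on the \emph{same} $2n$ vertices as $M_1,\dots,M_t$; they are certainly not vertex-disjoint from $\bigcup_{i\le t} A_i'$. So after you assign the old colours to $q-p$ thieves and the new colour to $p$ thieves, a retained new edge and a retained old edge may share a vertex. This is precisely the obstacle the lemma must overcome, and ``arranging the beads in some order along a necklace'' does not address it: with an arbitrary ordering, the Necklace Theorem guarantees the right counts per thief but gives you no control whatsoever over how many new/old incidences survive. The paper's construction is chosen exactly to get that control: one forms the graph $G=\bigl(\bigcup_{i\le t}B_i'\bigr)\cup B_{t+1}'$ (a union of two matchings, hence a disjoint union of paths and cycles), takes the line graph $L(G)$, breaks each cycle, and concatenates the resulting paths into the necklace. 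With this choice, two beads that correspond to vertex-sharing edges of $G$ are either adjacent in the necklace (a ``type~1'' conflict, which requires a cut between them to land in different thieves) or are endpoints of the same path (a ``type~2'' conflict, which requires at least one cut somewhere on that path). Either way, each of the at most $(q-1)(t+1)$ cuts can produce at most two conflicts, so at most $2(q-1)(t+1)$ new edges must be discarded. Without this structural choice of necklace, there is no bound at all on the discards, and the argument does not close.

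A second, smaller misattribution: the $-q$ slack in $a_i\le(1-\tfrac pq)b_i-q$ for $i\in[t]$ is not consumed by cut losses --- each old thief receives exactly $b_i'/q$ beads of colour $i$, with no further loss. The $-q$ pays for rounding $b_i$ down to the largest multiple $b_i'$ of $q$ (divisibility for the Necklace Theorem). The cut/conflict losses are charged entirely to colour $t+1$, and that is why the slack there is $-3(t+1)q$ (covering both the $-q$-type divisibility loss on $B_{t+1}'$ and the $\le 2(q-1)(t+1)$ conflict discards). If you rebuild the necklace as the concatenated line-graph paths and add the conflict-pair count, your outline becomes the paper's proof; as written, step (b) of your plan is incorrect and the core difficulty is unaddressed.
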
 
	\begin{proof}
		Let $\ell \in[t, (1+\varepsilon)t]$ be the number of disjoint perfect matchings provided by the definition of $\{b_i\}_{i=1}^{t}$ being $(\varepsilon,n)$-happy. We will show that $\{a_{i}\}_{i=1}^{t+1}$ is $(\varepsilon,n)$-happy with $\ell^\prime=\ell+1\in[t+1,(1+\varepsilon)(t+1)]$.
				
		In order to show that sequence $\{a_{i}\}_{i=1}^{t+1}$ is $(\varepsilon,n)$-happy, we need to show that for any $\ell^\prime$ disjoint perfect matchings there are $t+1$ of them $M_1, \ldots M_{t+1}$ and vertex-disjoint matchings $A_i\subseteq M_i$ such that $|A_{i}|\geq a_i$ for all $i\in[t+1]$ (then matching $M$ can be taken to be $M=\bigcup A_i$). 
		
		Let $\ell^\prime$ disjoint perfect matchings of $K_{2n}$ be given and $M_{t+1}$ be one of the matchings. Among $\ell=\ell^\prime-1$ remaining matchings, due to $\{b_{i}\}_{i=1}^{t}$ being $(\varepsilon,n)$-happy, there are $t$ of them $M_1,\ldots, M_t$ and subsets $B_i\subseteq M_i$ for $i\in[t]$ such that all $B_i$'s are vertex disjoint and $|B_i|\geq b_i$ for all $i\in [t]$. 
		
		In what follows we will find vertex disjoint matchings $A_1,\ldots, A_{t+1}$ such that $A_{t+1}\subseteq M_{t+1}$, $A_i\subseteq B_i$ for $i\in [t]$ and
		\begin{equation}\label{eq:AiAt+1}|A_i|\geq \left(1-\frac{p}{q}\right)b_i-q \; \text{for all} \; i\in[t], \; \text{and} \; |A_{t+1}|\geq\frac{p}{q}\left(\sum_{i=1}^{t}b_i\right)-3(t+1)q.\end{equation}
		Existence of such sets $A_1,\ldots, A_{t+1}$ implies that the sequence $\{a_{i}\}_{i=1}^{t+1}$ is $(\varepsilon,n)$-happy. Indeed, if $A_{1}, \ldots ,A_{t+1}$ are such that~(\ref{eq:AiAt+1}) holds, then $|A_i|\geq a_i$ for all $i\in[t+1]$.
		
		In order to satisfy divisibility condition in Theorem~\ref{thm:Alon}, for all $i\in [t]$ let $B'_i$ be a largest subset of $B_i$ such that $q$ divides $|B'_i|$, and $B'_{t+1}$ be a largest subset of $M_{t+1}$ such that $q$ divides $|B'_{t+1}|$. Let $b'_i=|B'_i|$ for $i\in [t+1]$, then 
		\begin{equation}\label{eq:b_iprime}
		b'_{i}\geq b_i-(q-1) \; \text{for} \; i\in [t] \; \text{and} \; b'_{k+1}\geq n-(q-1).
		\end{equation}		
		 We want to find sets $A_i\subseteq B_i^\prime$ such that $|A_i|\geq a_i$ for all $i\in[t+1]$ and $\bigcup_{i=1}^{t+1}A_i$ forms a matching. Since $\bigcup_{i=1}^{t}B_{i}^\prime\subseteq \bigcup_{i=1}^{t}B_i$ is a matching by our assumption, we only need to take care of incidences between edges of $B_{t+1}^\prime$ and $\bigcup_{i=1}^{t}B_{i}^\prime$. In what follows we will call the edges of $\bigcup_{i=1}^{t}B_{i}^\prime$ old and edges of $B_{t+1}^\prime$ new.
		
		The necklace needed to apply Theorem~\ref{thm:Alon} is constructed in the following four steps.
		
		{\bf Step 1.} Let $G$ be a graph that is formed by a union of all $B'_{i}$'s, $i\in[t+1]$. Then $G$ is a union of two matchings $\bigcup_{i=1}^{t}B'_{i}$ and $B'_{t+1}$, and hence each component of $G$ is either a cycle or a path. Colour edge $e$ of $G$ in colour $i$ if $e \in B'_i$. We say that colours in $[t]$ are old and colour $t+1$ is new.
		
		{\bf Step 2.} Let $L(G)$ be a line graph of $G$. The colouring of edges of $G$ induces a colouring of vertices of $L(G)$, vertices of $L(G)$ are properly $[t+1]$ coloured, and all neighbours of a new vertex are coloured with the old colour. Each component of $L(G)$ is also either a path or a cycle.
		
		{\bf Step 3.} Let $G'$ be a subgraph of $L(G)$ obtained from $G$ by deleting an edge from each cycle component. Then $G'$ is a union of paths $P_1, \ldots, P_{h}$. 
		
		{\bf Step 4.} Construct a necklace $N$ by concatenating paths $P_1,\ldots, P_{h}$ (this creates $h-1$ new links/edges in the necklace). The necklace contains $t+1$ types of beads and there are exactly $b'_i$ beads of colour $i$. Note that each bead is identified with a vertex of $L(G)$ and an edge of $G$.
		
		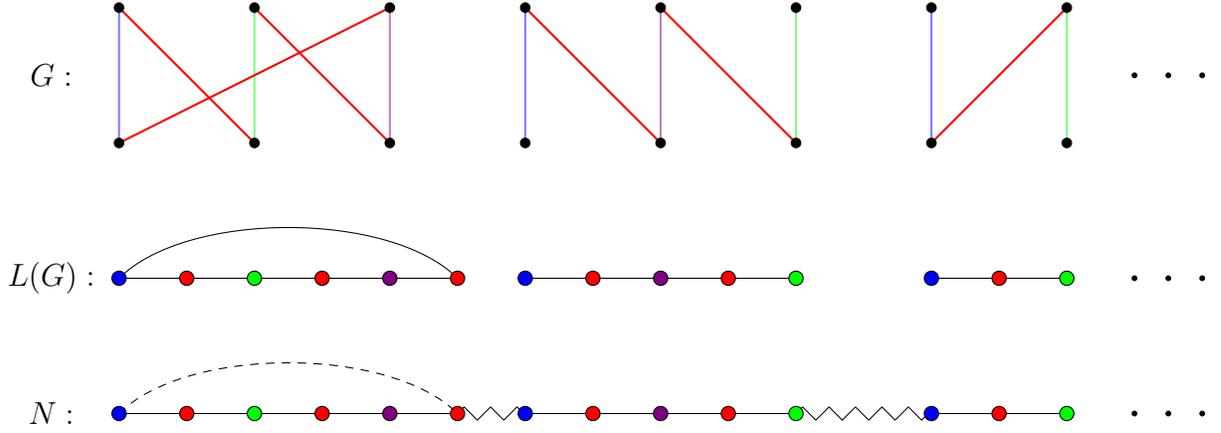
\begin{figure}[H]
			\begin{center}
				\begin{tikzpicture}[line cap=round,line join=round,>=triangle 45,x=1.0cm,y=1.0cm, scale=0.9]

				\clip(-1.7,-4.2) rectangle (16.5,2.1);
				\begin{scope}[xshift=0cm]
				\draw [color=blue, opacity=0.5, thick] (0,0)--(0, 2);
				\draw [color=blue, opacity=0.5, thick] (6,0)--(6, 2);
				\draw [color=blue, opacity=0.5, thick] (12,0)--(12, 2);
				\draw [color=green, opacity=0.5, thick] (2,0)--(2, 2);
				\draw [color=green, opacity=0.5, thick] (10,0)--(10, 2);
				\draw [color=green, opacity=0.5, thick] (14,0)--(14, 2);
				\draw [color=violet, opacity=0.5, thick] (4,0)--(4, 2);
				\draw [color=violet, opacity=0.5, thick] (8,0)--(8, 2);
				\draw [color=red, opacity=1, thick] (0,0)--(4, 2);
				\draw [color=red, opacity=1, thick] (2,0)--(0, 2);
				\draw [color=red, opacity=1, thick] (4,0)--(2, 2);
				\draw [color=red, opacity=1, thick] (8,0)--(6, 2);
				\draw [color=red, opacity=1, thick] (10,0)--(8, 2);
				\draw [color=red, opacity=1, thick] (12,0)--(14, 2);
				\draw [fill=black] (-1,1) circle (0pt) node {$G: $};
				\draw [fill=black] (15,1) circle (1pt);
				\draw [fill=black] (15.5,1) circle (1pt);
				\draw [fill=black] (16,1) circle (1pt);
				\foreach \a in {0,...,7}
				\draw [fill=black] (2*\a,0) circle (2pt);
				\foreach \a in {0,...,7}
				\draw [fill=black] (2*\a,2) circle (2pt);	
				\end{scope}
				
				\begin{scope}[yshift=-2cm]
				\foreach \a in {0,1,2,3,4,6,7,8,9,12,13}
				\draw [color=black] (\a,0)--(\a+1, 0);
				\draw (0,0) .. controls (1,1) and (4,1) .. (5,0);
				\draw [fill=black] (-1,0) circle (0pt) node {$L(G): $};
				\draw [fill=black] (15,0) circle (1pt);
				\draw [fill=black] (15.5,0) circle (1pt);
				\draw [fill=black] (16,0) circle (1pt);
				\foreach \a in {1,3,5,7,9,13}
				\draw [fill=red] (\a,0) circle (3pt);
				\foreach \a in {0,6,12}
				\draw [fill=blue] (\a,0) circle (3pt);	
				\foreach \a in {2,10,14}
				\draw [fill=green] (\a,0) circle (3pt);
				\foreach \a in {4,8}
				\draw [fill=violet] (\a,0) circle (3pt);			
				\end{scope}
				
				\begin{scope}[yshift=-4cm]
				\foreach \a in {0,1,2,3,4,6,7,8,9,12,13}
				\draw [color=black] (\a,0)--(\a+1, 0);
				
				\draw[snake=zigzag] (5,0)--(6, 0);
				\draw[snake=zigzag] (10,0)--(12, 0);
				\draw [dashed] (0,0) .. controls (1,1) and (4,1) .. (5,0);
				\draw [fill=black] (-1,0) circle (0pt) node {$N: $};
				\draw [fill=black] (15,0) circle (1pt);
				\draw [fill=black] (15.5,0) circle (1pt);
				\draw [fill=black] (16,0) circle (1pt);
				\foreach \a in {1,3,5,7,9,13}
				\draw [fill=red] (\a,0) circle (3pt);
				\foreach \a in {0,6,12}
				\draw [fill=blue] (\a,0) circle (3pt);	
				\foreach \a in {2,10,14}
				\draw [fill=green] (\a,0) circle (3pt);
				\foreach \a in {4,8}
				\draw [fill=violet] (\a,0) circle (3pt);			
				\end{scope}
				
				\end{tikzpicture}
			\end{center}
			\caption{Graph $G$ with edges of $B^\prime_{t+1}$ coloured red, line graph $L(G)$ and necklace $N$. Dotted edges were removed from $L(G)$, zigzag links were added to $L(G)$ to form a necklace.}\label{fig:2}
		\end{figure}

		By Theorem~\ref{thm:Alon}, applied with $\ell=t+1$, it is possible to split necklace $N$ between $q$ thieves with at most $(q-1)(t+1)$ cuts so that every thief gets exactly $b'_i/q$ beads of colour $i$. 
		
		With intention to avoid incidences between old and new edges, split the thieves in two 
		groups: one of size $p$ (called new thieves) and the other of size $q-p$ (called old thieves). For $i\in[t]$ let $A_i$ be the collection of edges of colour $i$ that belong to old thieves. Similarly let $A_{t+1}^\prime$ be the collection of edges of colour $t+1$ that belong to new thieves. Since each thieve has has exactly $b_{i}^\prime/q$ edges of colour $i$, then for $i\in [t]$
		\begin{equation}\label{eq:Ai}|A_i|= (q-p)\frac{b'_i}{q}\stackrel{\text{(\ref{eq:b_iprime})}}{\geq} \left(1-\frac{p}{q}\right)(b_i-q)\geq \left(1-\frac{p}{q}\right)b_i-q.
		\end{equation}
		and
		\begin{equation}\label{eq:At+1prime}
		|A^\prime_{t+1}|=p\frac{b_{t+1}^\prime}{q}.
		\end{equation}

		As mentioned above, the sets $A_1,\ldots, A_t$ are vertex disjoint matchings, however it is possible that $A^\prime_{t+1}$ is not vertex disjoint from some or all $A_i$ with $i\in [t]$.  
		
		We say that an edge $e\in A^\prime_{t+1}$ forms a {\emph{conflict pair}} with $f\in \bigcup_{i=1}^{t}A_i$ if $e$ and $f$ share a vertex. Let $A_{t+1}$ be obtained from $A_{t+1}^\prime$ by deleting edges $e$ that are in some conflict pair. Then $A_{t+1}$ is vertex disjoint from $A_1, \ldots, A_t$.
		
		In order to estimate the size of $A_{t+1}$ it remains to estimate the number of conflict pairs.
		\begin{claim}\label{claim:conflictpairs}
			There are at most $2(q-1)(t+1)$ conflict pairs.
		\end{claim}
		\begin{proof}
		Assume that a pair $(e,f)$ is a conflict pair and $e$ is a new edge that belongs to new thieve $T$, while $f$ is an old edge that belong to an old thieve $T'$. We say a conflict pair is of type 1 if $ef$ is an edge in $G'$ (see Figure~\ref{fig:conflictpair}(A)), and of type 2 if $ef$ is not an edge in $G'$ but is an edge in $L(G)$ (see  Figure~\ref{fig:conflictpair}(B)). 
		\begin{figure}[H]
			\centering
			\begin{subfigure}{.4\textwidth}
				\begin{tikzpicture}[line cap=round,line join=round,>=triangle 45,x=1.0cm,y=1.0cm, scale=0.9]
				\clip(-0.5,-0.5) rectangle (5.5,1);
				
				\foreach \a in {0,1,2,3,4}
				\draw [color=black, opacity=0.5, thick] (\a,0)--(\a+1, 0);
				\draw [dashed] (0,0) .. controls (1,1) and (4,1) .. (5,0);
				\foreach \a in {1,3,5}
				\draw [fill=red] (\a,0) circle (3pt);
				\foreach \a in {0}
				\draw [fill=blue] (\a,0) circle (3pt);	
				\foreach \a in {2}
				\draw [fill=green] (\a,0) circle (3pt);
				\foreach \a in {4}
				\draw [fill=violet] (\a,0) circle (3pt);
				\draw [fill=black] (3,0+0.4) circle (0pt) node {$e$};
				\draw [fill=black] (2,0+0.4) circle (0pt) node {$f$};	
				\draw [color=black, opacity=1, ultra thick] (2,-1)--(3, 1);					
				\end{tikzpicture}
				\caption{Type 1, cut is through $ef$}
			\end{subfigure}
			\begin{subfigure}{.4\textwidth}
				\begin{tikzpicture}[line cap=round,line join=round,>=triangle 45,x=1.0cm,y=1.0cm, scale=0.9]
				\clip(-0.5,-0.5) rectangle (5.5,1);
				
				\foreach \a in {0,1,2,3,4}
				\draw [color=black] (\a,0)--(\a+1, 0);
				\draw [dashed] (0,0) .. controls (1,1) and (4,1) .. (5,0);
				\foreach \a in {1,3,5}
				\draw [fill=red] (\a,0) circle (3pt);
				\foreach \a in {0}
				\draw [fill=blue] (\a,0) circle (3pt);	
				\foreach \a in {2}
				\draw [fill=green] (\a,0) circle (3pt);
				\foreach \a in {4}
				\draw [fill=violet] (\a,0) circle (3pt);
				\draw [fill=black] (5,0+0.4) circle (0pt) node {$e$};
				\draw [fill=black] (0,0+0.4) circle (0pt) node {$f$};	
				\draw [color=black, opacity=1, ultra thick] (1,-1)--(2, 1);								
				\end{tikzpicture}
				\caption{Type 2, cut is made on a path $P_i$}
			\end{subfigure}
			\caption{Two types of conflict pairs.}
			\label{fig:conflictpair}
			
		\end{figure}
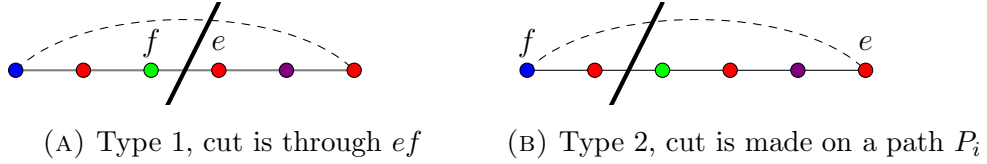
		If $(e,f)$ is type 1 conflict pair, since $e$ and $f$ belong to different thieves, one of the cuts on the necklace was made between $e$ and $f$. So there are at most $(q-1)(t+1)$ type 1 conflict pairs. 
		
		If $(e,f)$ is type 2 conflict pair, then $e$ and $f$ are endpoints of some path $P_i$ in $G^\prime$. Since $e$ and $f$ belong to different thieves, one of the cuts in the necklace was made somewhere on the path $P_i$. There were at most $(q-1)(t+1)$ cuts made, and each cut corresponds to at most one path $P_i$. So there are at most $(q-1)(t+1)$ conflict pairs of type 2.
		
		Therefore there are at most $2(q-1)(t+1)$ conflict pairs in total.\end{proof}
		
		In view of~(\ref{eq:At+1prime}) and Claim~\ref{claim:conflictpairs},
		$$|A_{t+1}|\geq p\frac{b'_{t+1}}{q}-2(t+1)(q-1)\stackrel{\text{(\ref{eq:b_iprime})}}{\geq} \frac{p}{q}(n-q)-2(t+1)(q-1).$$
		Now, since $\{b_i\}_{i=1}^{t}$ is an $(\varepsilon,n)$-happy sequence, then $\sum_{i=1}^{t} b_i\leq n$ and so
		\begin{equation}\label{eq:At+1}
		|A_{t+1}|\geq \frac{p}{q}\left(\sum_{i=1}^{t}b_i\right)-3(t+1)q.\end{equation}
	Finally, (\ref{eq:At+1}) and (\ref{eq:Ai}) imply (\ref{eq:AiAt+1}), establishing that a sequence $\{a_{i}\}_{i=1}^{t+1}$ is $(\varepsilon,n)$-happy.
	\end{proof}
	The following Claim shows that if a sequence $\mathcal{A}_{j}$ is happy, then so is sequence $\mathcal{A}_{j-1}$. 
	\begin{claim}\label{claim:nice->nice}
		If for $j \in[s]$ the constructed sequence $\mathcal{A}_j=\{a_{i,j}\}_{i=1}^{k-j}$ is $(\varepsilon,n)$-happy, then sequence $\mathcal{A}_{j-1}=\{a_{i,j-1}\}_{i=1}^{k-j+1}$ is also $(\varepsilon,n)$-happy.
	\end{claim}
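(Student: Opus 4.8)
The plan is to derive Claim~\ref{claim:nice->nice} from Lemma~\ref{lemma:nice->nice}, applied with $t=k-j$, with the $(\varepsilon,n)$-happy sequence $\{b_i\}_{i=1}^{t}$ taken to be $\mathcal{A}_j=\{a_{i,j}\}_{i=1}^{k-j}$, with $\{a_i\}_{i=1}^{t+1}=\mathcal{A}_{j-1}=\{a_{i,j-1}\}_{i=1}^{k-j+1}$, and with a rational $p/q$ chosen just above $m_{j-1}$. Since $\mathcal{A}_j$ was produced from $\mathcal{A}_{j-1}$ during the construction, we know $a_{k-j+1,j-1}>n^{1-\delta}$, so $m_{j-1}=a_{k-j+1,j-1}/\s{j-1}$ is defined and~\eqref{eq:ii4} holds for $i\in[k-j]$. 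Everything then reduces to producing a positive proper fraction $p/q$ with small enough denominator so that the two numerical hypotheses of Lemma~\ref{lemma:nice->nice} --- namely $a_{i,j-1}\le(1-\tfrac{p}{q})a_{i,j}-q$ for $i\in[k-j]$ and $a_{k-j+1,j-1}\le\tfrac{p}{q}\s{j}-3(k-j+1)q$ --- both hold.

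To find the admissible range of $p/q$ I would first extract two estimates from~\eqref{eq:ii4}. The bound $a_{i,j}>\frac{1+\lambda}{1-m_{j-1}}a_{i,j-1}-1$ rearranges (using $\tfrac{1-m_{j-1}}{1+\lambda}<1$) to $a_{i,j-1}<\frac{1-m_{j-1}}{1+\lambda}a_{i,j}+1$, so the first hypothesis of the lemma follows once $\bigl(1-\tfrac{p}{q}-\tfrac{1-m_{j-1}}{1+\lambda}\bigr)a_{i,j}\ge q+1$; since $a_{i,j}\ge a_{i,0}\ge n^{\varepsilon}$ by Claim~\ref{claim:ail1}(a), it suffices that $\tfrac{p}{q}\le\tfrac{\lambda+m_{j-1}}{1+\lambda}-\tfrac{q+1}{n^{\varepsilon}}$. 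Summing~\eqref{eq:ii4} over $i\in[k-j]$ and using $\sum_{i=1}^{k-j}a_{i,j-1}=(1-m_{j-1})\s{j-1}$ gives $\s{j}\ge(1+\lambda)\s{j-1}-(k-j)$, hence $a_{k-j+1,j-1}=m_{j-1}\s{j-1}\le\frac{m_{j-1}}{1+\lambda}\s{j}+k$; since $\s{j}\ge\s{0}\ge n/3$ by~\eqref{ineq:ii1} and Claim~\ref{claim:ail1}(c), the second hypothesis follows once $\tfrac{p}{q}\ge\tfrac{m_{j-1}}{1+\lambda}+\tfrac{12kq}{n}$. So it is enough to place $p/q$ in the interval
$$\left[\frac{m_{j-1}}{1+\lambda}+\frac{12kq}{n},\quad \frac{\lambda+m_{j-1}}{1+\lambda}-\frac{q+1}{n^{\varepsilon}}\right]\subseteq(0,1),$$
which sits inside $\bigl(\tfrac{m_{j-1}}{1+\lambda},\tfrac{\lambda+m_{j-1}}{1+\lambda}\bigr)$, an interval whose length $\tfrac{\lambda}{1+\lambda}\ge\lambda/2$ does not depend on $m_{j-1}$.

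It then remains to pick $q$, and I would take $q=\lceil 4/\lambda\rceil$. By~\eqref{eq:constants} and~\eqref{ineq:ii3} one has $\lambda\ge n^{-2\delta}$, so $q=O(n^{2\delta})$; together with $k<n^{1-\varepsilon}$ (which holds because $kn^{\varepsilon}\le\sum a_i<n$) and $\delta=\varepsilon/10$, both error terms $\tfrac{12kq}{n}$ and $\tfrac{q+1}{n^{\varepsilon}}$ are $O(n^{-4\varepsilon/5})=o(\lambda)$ for large $n$, so the displayed interval still has length at least $\lambda/3\ge 1/q$ and therefore contains a multiple of $1/q$ strictly between $0$ and $1$, i.e.\ a positive proper fraction $p/q$. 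With this $p/q$ the hypotheses of Lemma~\ref{lemma:nice->nice} are met, and since $\mathcal{A}_j$ is $(\varepsilon,n)$-happy by assumption, the lemma gives that $\mathcal{A}_{j-1}$ is $(\varepsilon,n)$-happy, as desired. The one place that needs care --- and the reason the constants in~\eqref{eq:constants} were tuned the way they were --- is checking that this interval is nonempty for \emph{every} admissible value of $m_{j-1}$, i.e.\ that the width $\lambda$ genuinely dominates the combined rounding losses coming from the floor in~\eqref{eq:ii4}, from passing to subsets of size divisible by $q$ inside the proof of Lemma~\ref{lemma:nice->nice}, and from replacing $\s{j-1}$ by $\s{j}$.
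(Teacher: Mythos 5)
Your proposal is correct and takes essentially the same route as the paper: reduce Claim~\ref{claim:nice->nice} to Lemma~\ref{lemma:nice->nice} applied with $t=k-j$, $\{b_i\}=\mathcal{A}_j$, $\{a_i\}=\mathcal{A}_{j-1}$, and a carefully chosen fraction $p/q$ near $m_{j-1}$. The only visible difference is in the bookkeeping: you take $q=\lceil 4/\lambda\rceil=O(n^{2\delta})$ and derive an explicit admissible interval for $p/q$ of width $\ge\lambda/3\ge 1/q$, whereas the paper fixes a larger $q\in[\tfrac14 n^{4\delta},\tfrac12 n^{4\delta}]$ and pins $p$ by requiring $0\le m_{j-1}-p/q\le 1/q$, then verifies both inequalities directly; both choices of constants work, and your admissible-interval formulation is arguably a slightly cleaner way to see that a valid $p/q$ exists.
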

	\begin{proof}
	In view of Lemma~\ref{lemma:nice->nice} it will be sufficient to verify the following inequalities for appropriate $\frac{p}{q}$ in order to show that the sequence $\mathcal{A}_{j-1}$ is $(\varepsilon,n)$-happy:
		$$a_{i,j-1}\leq \left(1-\frac{p}{q}\right)a_{i,j}-q \qquad \text{for all } i\in[k-j], \qquad \text{and} $$
		$$a_{k-j+1, j-1}\leq \frac{p}{q}\s{j}-3(k-j+1)q.$$

		Recall that $m_{j-1}=a_{k-j+1,j-1}/\s{j-1}$. Choose an integer $q\in[\frac{1}{4}n^{4\delta}, \frac{1}{2}n^{4\delta}]$. Let $p=p(j)$ be an integer such that 
		\begin{equation}\label{eq:mpq}
		0\leq m_{j-1}-\frac{p}{q}\leq \frac{1}{q}.
		\end{equation}
		
		Then, by the construction of sequence $\{a_{i,j}\}_{i=1}^{k-j}$, for any $i\in[k-j]$
		
		$$a_{i,j}\geq \frac{1+\lambda}{1-m_{j-1}}a_{i,j-1}-1,$$
		which in turn implies
		\begin{align*}
			a_{i,j-1} \leq \frac{1-m_{j-1}}{1+\lambda}a_{i,j}+\frac{1-m_{j-1}}{1+\lambda} \leq \left(1-\frac{p}{q}\right)\frac{a_{i,j}}{1+\lambda}+1\leq \left(1-\frac{p}{q}\right)a_{i,j}-\frac{\lambda}{1+\lambda}\left(1-\frac{p}{q}\right)a_{i,j}+1.
		\end{align*}
		By Claim~\ref{claim:ail1}(a), $a_{i,j}\geq a_i\geq n^{\varepsilon}$. Therefore by inequality~(\ref{ineq:ii3}), for large enough n,
		\begin{align*}
		a_{i,j-1} \leq 
		\left(1-\frac{p}{q}\right)a_{i,j}-\frac{n^{-2\delta}}{2}\frac{1}{q}n^\varepsilon+1 
		\leq \left(1-\frac{p}{q}\right)a_{i,j}-n^{\varepsilon-6\delta}+1  
		\leq \left(1-\frac{p}{q}\right)a_{i,j}-q,
		\end{align*}
		 proving the first set of inequalities sufficient to use Lemma~\ref{lemma:nice->nice}.
		 
		Now, by inequality~(\ref{eq:mpq}) and Claim~\ref{claim:ail1}(c),  
		\begin{align*}
			a_{k-j+1,j-1}=m_{j-1}\s{j-1}
			 \leq \left(\frac{p}{q}+\frac{1}{q}\right)\frac{\s{j}}{1+\lambda/2} 
		\end{align*}
		which after expanding gives
		\begin{align*}	 
			 a_{k-j+1,j-1}\leq   \frac{p}{q}\s{j}-\frac{p}{q}\frac{\lambda/2}{1+\lambda/2}\s{j}+\frac{\s{j}}{q(1+\lambda/2)}
		\end{align*}

	Now $m_{j-1}\geq n^{-\delta}$ by Claim~\ref{claim:ail2}(b), so for $n$ large enough 
	$$\frac{p}{q}\geq m_{j-1}-\frac{1}{q}\geq n^{-\delta}-4n^{-4\delta}\geq \frac{1}{2}n^{-\delta}.$$
	By inequality~(\ref{ineq:ii3}), $n^{-2\delta}\leq \lambda \leq 1$ and so for large enough $n$,
	\begin{align*}	 
	a_{k-j+1,j-1}\leq   \frac{p}{q}\s{j}-\frac{1}{2}n^{-\delta}\frac{n^{-2\delta}/2}{2}\s{j}+\frac{4\s{j}}{n^{4\delta}}\leq \frac{p}{q}\s{j}-\frac{1}{16}n^{-3\delta}\s{j}.
	\end{align*}
	
	Now, $\s{j}\geq \frac{n}{3}$ by Claim~\ref{claim:ail1}(c) and inequality~(\ref{ineq:ii1}), so
    $$ a_{k-j+1,j-1}\leq \frac{p}{q}\s{j}-\frac{1}{48}n^{1-3\delta}.$$
    Finally, notice that $k\leq n^{1-\varepsilon}$, as $\sum_{i=1}^{k} a_i<n$ and $a_i\geq n^{\varepsilon}$ for $i\in[k]$. Also, recall that $\epsilon=10\delta$, so for large enough $n$,
    
    $$a_{k-j+1, j-1}\leq \frac{p}{q}\s{j}-3n^{1-\varepsilon}n^{4\delta}\leq \frac{p}{q}\s{j}-3(k-j+1)q.$$
	
	Hence, Lemma~\ref{lemma:nice->nice} applied to a sequence $\{a_{i,j-1}\}_{i=1}^{k-j+1}$ implies that sequence  $\{a_{i,j}\}_{i=1}^{k-j}$ is $(\epsilon,n)$-happy.
\end{proof}

By construction, the last sequence $\mathcal{A}_{s}$ is either such that $a_{k-s,s}\leq n^{1-\delta}$ and hence $a_{i,s}\leq n^{1-\delta}$ for all $i\in [k-s]$ (by Claim~\ref{claim:ail1}(b)), or such that $\mathcal{A}_{s}=\{a_{1,k-1}\}$ is one element sequence with $a_{1,k-1}\leq (1-\frac{\varepsilon}{2})n$ (by Claim~\ref{claim:ail1}(a)). In the former case, by Claim~\ref{claim:ail1}(a), $\sigma_{s}<(1-\delta)n$, so the sequence $\mathcal{A}_{s}$ satisfies assumptions of Theorem~\ref{thm:main1} and is $(\delta,n)$-happy. In the latter case $\mathcal{A}_{s}$ is $(\varepsilon,n)$-happy by Definition~\ref{def:enhappy} 
	
In any case, the sequence $\mathcal{A}_s=\{a_{i,s}\}_{i=1}^{k-s}$ is $(\varepsilon,n)$-happy, which in turn, by repeatedly applying Claim~\ref{claim:nice->nice}, implies that $\{a_{i,0}\}_{i=1}^{k}=\{a_i\}_{i=1}^{k}$ is also $(\varepsilon,n)$-happy. This finishes the proof of Theorem~\ref{thm:main2}.	

\section{Trees in Steiner Triple System}\label{section:STS}
We start with noticing that for $\varepsilon,\alpha>0$ and integer $k$ and $n\geq n_0(\varepsilon, \alpha, k)$, if each element of a sequence $a_{1}, \ldots, a_{k}$ is at least $\alpha n$ and $\sum_{i=1}^{n}a_{i}\leq (1-\varepsilon)n$, then we can improve Theorem~\ref{thm:main2}. In particular we show that such sequence $a_{1}, \ldots, a_{k}$ is $(\varepsilon,n)$-happy with $\ell=k$ (recall Definition~\ref{def:enhappy}).
\begin{corollary}\label{cor:main}
	For any $\varepsilon, \alpha>0$ there is $n_0$ such that the following holds for all $n\geq n_0$. Let sequence $a_{1}, \ldots, a_{k}$ be such that $\sum_{i=1}^{k}a_{i}\leq(1-\varepsilon)n$ and $a_{i}\geq \alpha n$ for all $i\in[k]$. Then for  any $k$ disjoint perfect matchings $M_1,\ldots, M_{k}$ of $K_{2n}$ there is a matching $M$ such that $|M \cap M_i|\geq a_{i}$.  	
\end{corollary}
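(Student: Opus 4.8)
The plan is to deduce the corollary from Theorem~\ref{thm:main2}, the point being that the hypotheses force the number of terms $k$ to be bounded by an absolute constant, so we may apply Theorem~\ref{thm:main2} with a slack parameter small enough that the window $[k,(1+\varepsilon)k]$ appearing in Definition~\ref{def:enhappy} shrinks to the single integer $k$; unwinding $(\varepsilon,n)$-happiness with $\ell=k$ is then literally the statement of the corollary.

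Concretely, first note that if $k=0$ there is nothing to prove, and that we may assume $\varepsilon<1$ (otherwise $\sum a_i\le(1-\varepsilon)n\le 0$ forces $k=0$). Since $a_i\ge\alpha n$ for each $i$ and $\sum_{i=1}^k a_i\le(1-\varepsilon)n$, we get $k\alpha n\le(1-\varepsilon)n$, hence $k\le(1-\varepsilon)/\alpha$, a bound depending only on $\varepsilon$ and $\alpha$. Now fix an auxiliary constant $\varepsilon_0$ with $0<\varepsilon_0<\min\{\varepsilon,\alpha\}$, say $\varepsilon_0=\tfrac12\min\{\varepsilon,\alpha\}$. Choosing $n_0$ large enough — so that $\alpha n\ge n^{\varepsilon_0}$ (which holds for large $n$ because $\varepsilon_0<1$) and so that $n\ge n_2$, where $n_2$ is the threshold from Theorem~\ref{thm:main2} for the parameter $\varepsilon_0$ — the sequence $a_1,\dots,a_k$ meets the hypotheses of Theorem~\ref{thm:main2} with $\varepsilon_0$ in place of $\varepsilon$: each term satisfies $a_i\ge\alpha n\ge n^{\varepsilon_0}$, and $\sum_{i=1}^k a_i\le(1-\varepsilon)n<(1-\varepsilon_0)n$ since $\varepsilon_0<\varepsilon$. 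Hence $\{a_i\}_{i=1}^k$ is $(\varepsilon_0,n)$-happy, so Definition~\ref{def:enhappy} supplies an integer $\ell\in[k,(1+\varepsilon_0)k]$ with the required matching property.

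It remains to observe the quantifier collapse: by the bound on $k$ and the choice $\varepsilon_0<\alpha$ we have $\varepsilon_0 k\le\varepsilon_0(1-\varepsilon)/\alpha<1$, so $(1+\varepsilon_0)k<k+1$ and the only integer in $[k,(1+\varepsilon_0)k]$ is $\ell=k$. Unwinding Definition~\ref{def:enhappy} with $\ell=k$ then says precisely that for any $k$ disjoint perfect matchings of $K_{2n}$ there is a matching $M$ with $|M\cap M_i|\ge a_i$ for all $i$, which is the corollary. (The choice of how the definition indexes the $k$ matchings is immaterial here, since happiness of a sequence is trivially preserved under permuting its terms.) I do not anticipate a genuine obstacle: the whole argument is this quantifier collapse, driven entirely by the boundedness of $k$ in terms of $\varepsilon$ and $\alpha$; the only care needed is in choosing $n_0$ so that both Theorem~\ref{thm:main2} applies with parameter $\varepsilon_0$ and $\alpha n\ge n^{\varepsilon_0}$.
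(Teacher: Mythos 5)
Your proof is correct and follows essentially the same approach as the paper: both bound $k$ by a constant depending only on $\alpha$ and $\varepsilon$, then apply Theorem~\ref{thm:main2} with a smaller auxiliary parameter chosen so that the window $[k,(1+\cdot)k]$ in Definition~\ref{def:enhappy} contains only the integer $k$. The paper takes $\delta=\min\{1/(\tfrac{1}{\alpha}+1),\varepsilon\}$ while you take $\varepsilon_0=\tfrac12\min\{\varepsilon,\alpha\}$, but the mechanism — the quantifier collapse forced by boundedness of $k$ — is identical.
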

\begin{proof}
	We choose $n_0$ implicitly and let $n\geq n_0$ be sufficiently large.
	
	For $\delta=\min\{1/(\frac{1}{\alpha}+1), \varepsilon\}$ and $n$ sufficiently large we have  $a_{i}\geq n^{\delta}$ for all $i\in [k]$. Then, Theorem~\ref{thm:main2} implies that sequence $\{a_{i}\}_{i=1}^{k}$ is $(\delta, n)$-happy, provided $n$ is large enough. 
	
	Notice that $k\leq \frac{1}{\alpha}$ because $\sum_{i=1}^{k}a_i\leq n$ and $a_i\geq \alpha n$ for all $i\in[k]$.
	Then $\delta\leq 1/(k+1)$ and $a_1,\ldots, a_{k}$ is $(\delta, n)$-happy, so Definition~\ref{def:enhappy} implies that $\ell=k$ and consequently that for any $k$ disjoint perfect matchings $M_1, \ldots, M_{k}$ of $K_{2n}$ there is a matching $M$ such that for all $|M\cap M_{i}|\geq a_i$ for all $i\in[k]$	
\end{proof}

We now define a $(c, a_1,\ldots,a_{k})$-turkey. Recall that a hypertree is a simple $3$-uniform hypergraph in which every two vertices are joined by a unique path. Also notice that a hypertree with $n$ hyperedges has $2n+1$ vertices. A size of hypertree is the number of hyperedges in it. A hyperstar $S$ of size $a$ centered at $v$ is a hypergraph on vertex set $v, v_1, v_2, \ldots, v_{2a}$ with edge set $E(S)=\{\{v,v_{2i-1}, v_{2i}\} : i\in[a]\}$. For integers $c,a_1,\ldots, a_k$ we now define a class of trees which we will call $(c,a_1,\ldots, a_k)$-turkeys.    
\begin{definition}
	Let $C$ be an arbitrary hypertree on $c$ vertices with $k$ specified vertices $v_1,\ldots, v_k$. We say that a 3-uniform simple hypertree $T$ is a $(c, a_1,\ldots,a_{k})$-turkey if $T$ is a union of $C$ with vertex disjoint hyperstars $S_1,\ldots,S_k$ which are centered at $v_1,\ldots, v_k$ and of size $a_1, \ldots, a_k$.

\end{definition}
	\begin{figure}[H]
	\begin{center}
		\begin{tikzpicture}[line cap=round,line join=round,>=triangle 45,x=1.0cm,y=1.0cm, scale=1]

		\clip(-2.8,-1.2) rectangle (5.8,3.8);
		\begin{scope}[xshift=0cm]
		\draw [fill=black] (0,0.5) circle (2pt);
		\draw [fill=black] (1,1) circle (2pt);
		\draw [fill=black] (2,1) circle (2pt);
		\draw [fill=black] (3,0.5) circle (2pt);
		\draw [fill=black] (0.5,-0.5) circle (2pt);
		\draw [fill=black] (1.5,-0.5) circle (2pt);
		\draw [fill=black] (2.5,-0.5) circle (2pt);
		\draw [fill=black] (0,0.5)++(180:1) circle (2pt);
		\draw [fill=black] (0,0.5)++(180:2) circle (2pt);
		\draw [fill=black] (0,0.5)++(150:1) circle (2pt);
		\draw [fill=black] (0,0.5)++(150:2) circle (2pt);
		\draw [fill=black] (1,1)++(100:1) circle (2pt);
		\draw [fill=black] (1,1)++(100:2) circle (2pt);
		\draw [fill=black] (1,1)++(130:1) circle (2pt);
		\draw [fill=black] (1,1)++(130:2) circle (2pt);
		\draw [fill=black] (2,1)++(100:1) circle (2pt);
		\draw [fill=black] (2,1)++(100:2) circle (2pt);
		\draw [fill=black] (2,1)++(70:1) circle (2pt);
		\draw [fill=black] (2,1)++(70:2) circle (2pt);	
		\draw [fill=black] (2,1)++(40:1) circle (2pt);
		\draw [fill=black] (2,1)++(40:2) circle (2pt);	
		\draw [fill=black] (3,0.5)++(15:1) circle (2pt);
		\draw [fill=black] (3,0.5)++(15:2) circle (2pt);
		\draw [fill=black] (3,0.5)++(-15:1) circle (2pt);
		\draw [fill=black] (3,0.5)++(-15:2) circle (2pt);	
		\draw [fill=black] (3,0.5)++(45:1) circle (2pt);
		\draw [fill=black] (3,0.5)++(45:2) circle (2pt);
		\draw [fill=black] (3,0.5)++(-45:1) circle (2pt);
		\draw [fill=black] (3,0.5)++(-45:2) circle (2pt);
		\draw[line width=15pt, color=red, draw opacity=0.25] (0,0.5)--+(180:1)--+(180:2);
		\draw[line width=15pt, color=red, draw opacity=0.25] (0,0.5)--+(150:1)--+(150:2);
		\draw [fill=black] (0,0.5)++(165:2.5) circle (0pt) node {$S_1$};
		\draw[line width=15pt, color=red, draw opacity=0.25] (1,1)--+(100:1)--+(100:2);
		\draw[line width=15pt, color=red, draw opacity=0.25] (1,1)--+(130:1)--+(130:2);
		\draw [fill=black] (1,1)++(115:2.5) circle (0pt) node {$S_2$};
		\draw[line width=15pt, color=red, draw opacity=0.25] (2,1)--+(100:1)--+(100:2);
		\draw[line width=15pt, color=red, draw opacity=0.25] (2,1)--+(70:1)--+(70:2);
		\draw[line width=15pt, color=red, draw opacity=0.25] (2,1)--+(40:1)--+(40:2);
		\draw [fill=black] (2,1)++(70:2.5) circle (0pt) node {$S_3$};
		\draw[line width=15pt, color=red, draw opacity=0.25] (3,0.5)--+(15:1)--+(15:2);
		\draw[line width=15pt, color=red, draw opacity=0.25] (3,0.5)--+(-15:1)--+(-15:2);
		\draw[line width=15pt, color=red, draw opacity=0.25] (3,0.5)--+(45:1)--+(45:2);
		\draw[line width=15pt, color=red, draw opacity=0.25] (3,0.5)--+(-45:1)--+(-45:2);
		\draw [fill=black] (3,0.5)++(0:2.5) circle (0pt) node {$S_4$};
		\draw [line width=15pt, color=blue, draw opacity=0.5] (0.5,-0.5)--(0,0.5)--(0.5,0.3)--(1,1)--(0,0.5)--(1,1)--(0.5,-0.5)--cycle;
		\draw [line width=15pt, color=blue, draw opacity=0.5] (1.5,-0.5)--(1,1)--(1.5,1)--(1.5,0.6)--(1.5,1)--(2,1)--cycle;
		\draw [line width=15pt, color=blue, draw opacity=0.5] (3-0.5,-0.5)--(3-0,0.5)--(3-0.5,0.3)--(3-1,1)--(3-0,0.5)--(3-1,1)--(3-0.5,-0.5)--cycle;
		\draw (1.5,0.2) ellipse (2cm and 1.25cm);
		\draw [fill=black] (0.5,-0.5)++(180:1.2) circle (0pt) node {$C$};
		
		\end{scope}
	
		\end{tikzpicture}
	\end{center}
	\caption{A $(7, 2, 2, 3, 4)$-turkey on 29 vertices, red hyperedges represent hyperstars $S_1, \ldots, S_4$.}\label{fig:2}
\end{figure}
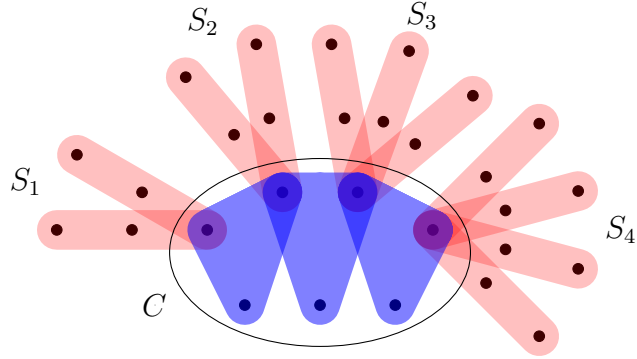

Now we use Corollary~\ref{cor:main} to verify the Conjecture~\ref{conj:ER} for turkeys. 

\begin{theorem}
	For any $\varepsilon, \alpha, c>0$ there is $n_0$ such that the following holds for all $n\geq n_0$. If integers $a_1,\ldots, a_k$ are such $\sum_{i=1}^{k}a_{i}\leq n$ and $a_{i}\geq \alpha n$ for all $i\in[k]$, and $T$ is a $(c, a_1, \ldots, a_{k})$-turkey on $2n+1$ vertices, then any Steiner triple system $S$ on at least $2(1+\varepsilon)n$ vertices contains $T$ as a subhypergraph. 
\end{theorem}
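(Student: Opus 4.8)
The plan is to embed the small ``body'' $C$ first, read off the images $w_1,\dots,w_k$ of its attachment vertices $v_1,\dots,v_k$, use the elementary fact that the link of a vertex of $S$ is a perfect matching (and that distinct links are edge‑disjoint because a pair of vertices lies in a unique triple), and then feed the resulting family of matchings to Corollary~\ref{cor:main} to carve out the $k$ large vertex‑disjoint hyperstars. We may assume the $v_i$ are distinct (otherwise merge the corresponding stars). Since $C$ is a hypertree on the fixed number $c$ of vertices and the order of $S$ is far larger than $2c$, the greedy argument noted in the introduction embeds $C$ into $S$: process the hyperedges of $C$ one at a time, each sharing a single already‑placed vertex $w$ with the previous ones, and extend $w$ through one of the $(\,|V(S)|-1)/2$ edges of $S$ containing it, of which all but at most $c-1$ avoid the $O(1)$ vertices used so far. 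Write $R\subseteq V(S)$, $|R|=c$, for the set of images, and $w_1,\dots,w_k\in R$ for the images of $v_1,\dots,v_k$.

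Put $V_0=V(S)\setminus R$. Its size $|V(S)|-c$ is even, since a Steiner triple system has an odd number of vertices and $c=2n+1-2\sum a_i$ is odd. For $i\in[k]$ let $L_i=\{\{x,y\}\subseteq V_0:\{w_i,x,y\}\in S\}$ be the restriction to $V_0$ of the link of $w_i$. Two facts drive the argument. First, each $L_i$ is a matching on $V_0$ missing at most $c-1$ of its vertices: the link of $w_i$ is a perfect matching of $V(S)\setminus\{w_i\}$, and passing to $V_0$ merely deletes the at most $c-1$ edges meeting the other vertices of $R$. Second, $L_1,\dots,L_k$ are pairwise edge‑disjoint: if $\{x,y\}\in L_i\cap L_j$ then $\{w_i,x,y\}$ and $\{w_j,x,y\}$ would both be the unique edge of $S$ on the pair $\{x,y\}$, forcing $w_i=w_j$. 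It therefore suffices to find pairwise vertex‑disjoint matchings $A_i\subseteq L_i$ with $|A_i|\ge a_i$; then mapping $v_i\mapsto w_i$ and sending the $a_i$ edges of $S_i$ to any $a_i$ of the triples $\{w_i,x,y\}$ with $\{x,y\}\in A_i$ embeds each star, and since the $A_i$ are vertex‑disjoint and contained in $V_0=V(S)\setminus R$ these embeddings are compatible with one another and with the embedding of $C$.

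To apply Corollary~\ref{cor:main} I first turn the near‑perfect $L_i$ into genuine perfect matchings without destroying edge‑disjointness. Introduce pairwise disjoint blocks $D_1,\dots,D_k$ of $2c$ fresh dummy vertices each, and set $\widehat L_i=L_i\cup F_i$, where $F_i$ is a perfect matching on $D_i$ together with the (at most $c-1$) vertices of $V_0$ uncovered by $L_i$; this vertex set has even cardinality, so $F_i$ exists. Each $\widehat L_i$ is a perfect matching of $K_{2m}$ on $V_0\cup D_1\cup\dots\cup D_k$, and since every edge of $F_i$ meets $D_i$ the matchings $\widehat L_1,\dots,\widehat L_k$ remain pairwise edge‑disjoint. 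Here $2m=|V_0|+2ck\ge|V(S)|-c\ge 2(1+\varepsilon)n-c$, so $m\ge(1+\tfrac{3\varepsilon}{4})n$ for $n$ large; hence with $a_i'=a_i+2c$ we have $\sum_i a_i'\le n+2ck\le(1-\tfrac{\varepsilon}{2})m$ and $a_i'\ge a_i\ge\alpha n\ge\tfrac{\alpha}{2}m$, so Corollary~\ref{cor:main}, applied with parameters $\varepsilon/2$ and $\alpha/2$ (valid once $n\ge n_0(\varepsilon,\alpha,c)$), yields a matching $M$ on $V_0\cup\bigcup_i D_i$ with $|M\cap\widehat L_i|\ge a_i'$ for all $i$. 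Set $A_i=M\cap L_i$: the edges of $M$ lying in $\widehat L_i$ but not in $L_i$ all belong to $F_i$, and $|F_i|<2c$, so $|A_i|\ge a_i'-2c=a_i$; the $A_i$ are vertex‑disjoint as submatchings of $M$ and use no dummy vertex and no vertex of $R$. This produces the required stars and completes the embedding of $T$.

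The conceptual content is short: the link of a vertex is a perfect matching, and distinct links are edge‑disjoint. The only fiddly points, both routine, are the patching of the near‑perfect matchings $L_i$ into pairwise edge‑disjoint \emph{perfect} matchings (done above via the private dummy blocks $D_i$, with a harmless inflation of the targets from $a_i$ to $a_i+2c$), and the bookkeeping that converts the slack in $|V(S)|\ge 2(1+\varepsilon)n$ into the hypothesis $\sum_i a_i'\le(1-\varepsilon')m$ needed by Corollary~\ref{cor:main}.
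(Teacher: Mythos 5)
Your construction mirrors the paper's: embed the core $C$ greedily, observe that the links of the centre vertices $w_i$ are pairwise edge‑disjoint matchings, patch them to perfect matchings (you use fresh dummy blocks $D_i$; the paper adds one fake vertex $w$ to all of $V(S)$ and carries the edges meeting $C'$ along, calling them ``fake''), apply Corollary~\ref{cor:main}, and then discard the slack. That much is fine, and your bookkeeping of parities, sizes of $F_i$, and the claim that at most $2c$ edges must be deleted from each $M\cap\widehat L_i$ are all correct.

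There is, however, a genuine gap in the step where you verify the hypotheses of Corollary~\ref{cor:main}. You take $a_i'=a_i+2c$ and then write $a_i'\ge a_i\ge\alpha n\ge\tfrac{\alpha}{2}m$. The last inequality requires $m\le 2n$. But the theorem only assumes $|V(S)|\ge 2(1+\varepsilon)n$ with no upper bound, so $m$ can be arbitrarily large relative to $n$, and $\alpha n\ge\tfrac{\alpha}{2}m$ then fails; the hypothesis $a_i'\ge\alpha'm$ of Corollary~\ref{cor:main} is not satisfied for any fixed $\alpha'>0$. (Note that $a_i'$ must scale with $m$: as $m\to\infty$ both hypotheses of the corollary reference $m$, and your $a_i'$ does not.) The paper avoids this by scaling the targets: it sets $a_i'=\frac{(1+\varepsilon/2)n'}{(1+\varepsilon)n}\,a_i$, so that $\sum a_i'\le(1-\varepsilon')n'$ and $a_i'\ge\alpha'n'$ hold simultaneously for fixed $\varepsilon'=\frac{\varepsilon}{2(1+\varepsilon)}$ and $\alpha'=\frac{1+\varepsilon/2}{1+\varepsilon}\alpha$, regardless of how large $n'$ is; the surplus $a_i'-a_i\ge\frac{\varepsilon}{2}a_i\ge\frac{\varepsilon}{2}\alpha n$ then absorbs the $O(c)$ losses. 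Your argument is easily repaired by replacing $a_i'=a_i+2c$ with a similarly rescaled target (taking a ceiling to keep it an integer), but as written it does not cover Steiner triple systems whose order substantially exceeds $2(1+\varepsilon)n$.
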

\begin{proof}
	Since $T$ is $(c, a_1, \ldots, a_{k})$-turkey, there is a hypertree $C$ on $c$ vertices, vertices $v_1,\ldots, v_{k}\in V(C)$ and vertex disjoint hyperstars $S_1, \ldots, S_{k}$, such that size of $S_{i}$ is $a_i$ for all $i\in[k]$ and such that $T$ is obtained by identifying centers of stars $S_1, \ldots, S_{k}$ with $v_1, \ldots , v_k$.
	
	Let $S$ be a Steiner triple system of order at least $2(1+\varepsilon)n$. We can find a copy $C^\prime$ of $C$ in $S$ greedily, provided $n\geq c$. Let vertices $v_1, \ldots, v_{k}$ of $C$ correspond to $u_1,\ldots, u_{k}$ in $C^{\prime}$. It remains to embed stars $S_1,\ldots, S_{k}$ into $S$.
	
	Let $X=V(S)\cup \{w\}$, where $w$ is a new ``fake'' vertex. Then $|X|$ is even, since $|V(S)|$ is always odd (in any Steiner triple system  $|V(S)|\equiv 1,3 \; (\text{mod } 6)$ ). For $i\in[k]$ let $M_{i}$ be a matching on $X$ defined by
	$$M_{i}=\{\{u,v\}: \{u_{i},u,v\}\in E(S)\}\cup \{\{u_{i},w\}\}.$$
	We say that for $i\in[k]$ an edge $\{u,v\}\in M_{i}$ is ``fake'' if $\{u,v\}=\{w,u_i\}$ or $|\{u,v\}\cap V(C^{\prime})|\neq 0$. Notice that there are at most $c$ fake edges in each $M_{i}$, since $M_i$ is a matching and every fake edge contains at least one vertex from $C^{\prime}$.
	
	We now have edge disjoint perfect matchings $M_1, \ldots, M_{k}$ on $|X|=2n^\prime$ vertices with $n^\prime\geq (1+\varepsilon)n$. For all $i\in[k]$ let
	$$a_i^{\prime}=\frac{ (1+\varepsilon/2)n^\prime}{(1+\varepsilon)n}a_{i}.$$
	Then for $\varepsilon^\prime=\frac{\varepsilon}{2(1+\varepsilon)}$, since $\sum_{i=1}^{k}a_{i}\leq n$ we have  $$\sum_{i=1}^{k}a_{i}^{\prime}\leq \frac{1+\frac{\varepsilon}{2}}{1+\varepsilon}n^\prime=(1-\varepsilon^\prime) n^{\prime}.$$
	 
	Moreover for $\alpha^{\prime}=\frac{1+\varepsilon/2}{1+\varepsilon}\alpha$ and all $i\in[k]$, we have 
	$$a_{i}^{\prime}\geq \frac{(1+\varepsilon/2)n^\prime}{(1+\varepsilon)n}\alpha n=\alpha^\prime n^\prime. $$
	
	Therefore, provided $n$ is sufficiently large, by Corollary~\ref{cor:main} applied to $\varepsilon^\prime, \alpha^\prime$ and sequence $a_1^\prime, \ldots, a_{k}^{\prime}$, there is a matching $M$ such that $|M\cap M_i|\geq a_{i}^{\prime}$ for all $i\in[k]$.
	
	For $i\in[k]$ let  $A_{i}$ be obtained from $M\cap M_i$ by deleting fake edges. Then for all $i\in[k]$, provided $n$ is large enough,
	$$|A_{i}|\geq a_{i}^\prime -c\geq \left(1+\frac{\varepsilon}{2}\right)a_{i}-c\geq a_i+\frac{\varepsilon}{2}\alpha n-c\geq a_i.$$
	Finally, for each $i\in [k]$  let $$S_i^\prime=\{\{u_i,u,v\}: \{u,v\}\in A_i\}$$
	be the stars centered at $u_1, \ldots, u_k$. For each $i\in[k]$, size of $S^{\prime}_{i}$ is at least $a_i$. By construction, $A_{i}$'s have no fake edges, so $V(S_{i}^{\prime})\cap V(H)=\{v_{i}\}$ for all $i \in [k]$. Since $\bigcup_{i=1}^{k}A_i\subseteq M$ forms a matching, stars $S_1^\prime, \ldots S_k^{\prime}$ are vertex disjoint.
	
	Therefore $C' \bigcup \left(\bigcup_{i=1}^{k}S^{\prime}_{i}\right)$ is a copy of $T$ in $S$. 
\end{proof}
\section{Concluding remarks}
Note that one may modify the proof of Theorem~\ref{thm:main2} allowing to prove a stronger form of Corollary~\ref{cor:main}, namely that for any rational $\alpha_1,\ldots, \alpha_{k}\in(0,1)$ such that $\sum_{i=1}^{k}\alpha_i\leq 1$ there are integers $K$ and $n_0$ such that for any $n\geq n_0$ and any $k$ disjoint perfect matchings $M_1, \ldots, M_{k}$ of $K_{2n}$ there is a matching $M$ such that $|M_{i}\cap M|\geq \alpha_in-K$. 

Finally, some of the authors of this paper believe that for any $\epsilon>0$ there is $n_0$, such that if $n\geq n_0$, then any sequence $a_1,\ldots,a_k$ that satisfies $\sum_{i=1}^{k} a_i<(1-\epsilon)n$ is $(\epsilon,n)$-happy. The only sequences for which we know this to be true are covered by Theorem~\ref{thm:main}.
	\begin{bibdiv}
		\begin{biblist}

			\bib{Alon}{article}{
				author={Alon, Noga},
				title={Splitting necklaces},
				journal={Adv. in Math.},
				volume={63},
				date={1987},
				number={3},
				pages={247--253},
			}
			
		\bib{AKS}{article}{
				author={Alon, Noga},
				author={Kim, Jeong-Han},
				author={Spencer, Joel},
				title={Nearly perfect matchings in regular simple hypergraphs},
				journal={Israel J. Math.},
				volume={100},
				date={1997},
				pages={171--187},
			}
			\bib{ER}{article}{
				author={Elliott, Bradley},
				author={R\"{o}dl, Vojt\v{e}ch},
				title={Embedding hypertrees into Steiner triple systems},
				journal={J. Combin. Des.},
				volume={27},
				date={2019},
				number={2},
				pages={82--105},
			}

			\bib{JLR}{book}{
				author={Janson, Svante},
				author={\L uczak, Tomasz},
				author={Rucinski, Andrzej},
				title={Random graphs},
				series={Wiley-Interscience Series in Discrete Mathematics and
					Optimization},
				publisher={Wiley-Interscience, New York},
				date={2000},
				pages={xii+333},
			}
			
			\bib{KPSY}{article}{
				author={Keevash, Peter},
				author={Pokrovskiy, Alexey},
				author={Sudakov, Benny},
				author={Yepremyan, Liana},
				title={New bounds for Ryser's conjecture and related problems},
				note={\href{https://arxiv.org/abs/2005.00526}{arxiv:2005.00526}},
			}
			
			\bib{KR}{article}{
				author={Kostochka, A. V.},
				author={R\"{o}dl, V.},
				title={Partial Steiner systems and matchings in hypergraphs},
				booktitle={Proceedings of the Eighth International Conference ``Random
					Structures and Algorithms'' (Poznan, 1997)},
				journal={Random Structures Algorithms},
				volume={13},
				date={1998},
				number={3-4},
				pages={335--347},
			}
		\end{biblist}
	\end{bibdiv}

\end{document}